\theoremstyle{plane}
\newtheorem{thm}{Theorem}[section]
\newtheorem{lemma}[thm]{Lemma}
\newtheorem{prop}[thm]{Proposition}
\newtheorem{cor}[thm]{Corollary}
\theoremstyle{definition}
\newtheorem{defn}[thm]{Definition}
\newtheorem*{rmk}{Remark}
\newtheorem*{ack}{Acknowledgement}
\newtheorem*{claim}{Claim}
\newtheorem{example}[thm]{Example}
\newcommand{\A}{\mathbb{A}}
\newcommand{\F}{\mathbb{F}}
\newcommand{\Z}{\mathbb{Z}}
\newcommand{\N}{\mathbb{N}}
\newcommand{\Q}{\mathbb{Q}}
\let\P\relax
\newcommand{\P}{\mathbb{P}}
\newcommand{\C}{\mathbb{C}}
\newcommand{\bY}{\mathbb{Y}}
\let\k\relax
\newcommand{\k}{\mathbf{k}}
\let\O\relax
\newcommand{\O}{\mathcal{O}}
\newcommand{\X}{\mathcal{X}}
\newcommand{\Y}{\mathcal{Y}}
\newcommand{\B}{\mathcal{B}}
\let\H\relax
\newcommand{\H}{\mathcal{H}}
\newcommand{\cF}{\mathcal{F}}
\newcommand{\cC}{\mathcal{C}}
\newcommand{\U}{\mathcal{U}}
\newcommand{\g}{\mathfrak{g}}
\let\b\relax
\newcommand{\b}{\mathfrak{b}}
\newcommand{\n}{\mathfrak{n}}
\newcommand{\f}{\mathfrak{f}}
\newcommand{\fS}{\mathfrak{S}}
\newcommand{\s}{\mathbf{s}}
\let\u\relax
\newcommand{\u}{\mathbf{u}}
\let\l\relax
\newcommand{\l}{\ell}
\newcommand{\act}[1]{{}^{#1}}
\newcommand{\cl}[1]{\overline{#1}}
\DeclareMathOperator{\ad}{\textup{ad}}
\DeclareMathOperator{\supp}{\textup{supp}}
\DeclareMathOperator{\Spec}{\textup{Spec}}
\DeclareMathOperator{\Pic}{\textup{Pic}}
\DeclareMathOperator{\ch}{\textup{char}}
\DeclareMathOperator{\del}{\partial}
\title{Homology Class of a Deligne-Lusztig variety and its analogues}
\author{Dongkwan Kim}
\address{Department of Mathematics\\
  Massachusetts Institute of Technology\\
  Cambridge, MA 02139-4307\\
  U.S.A.}
\email{sylvaner@math.mit.edu}
\date{\today}							
\begin{document}
\begin{abstract} In this paper we consider Deligne-Lusztig varieties and their analogues when the Frobenius endomorphism is replaced with conjugation by an element in a group, especially a regular semisimple or regular unipotent one. We calculate their classes in the Chow group of the flag variety in terms of Schubert classes. Also we give some sufficient criteria when different elements in the Weyl group result in the same class. 
\end{abstract}

\maketitle

\renewcommand\contentsname{}
\tableofcontents

\section{Introduction}
Let $\k$ be an algebraically closed field and $G$ be a connected reductive group defined over $\k$. If $G$ is in addition defined over $\F_q$ for $q$ power of $\ch \k$, we denote by $F$ the geometric Frobenius morphism corresponding to $\F_q$. Let $W$ be the Weyl group of $G$ and $\B$ be the flag variety of $G$.

In this article we are interested in a Deligne-Lusztig variety corresponding to $w \in W$, denoted $X(w)$ \cite{dl} and its analogues when we replace $F$ with conjugation by $g\in G$, denoted $\Y_{w,g}$. We are mainly focused on their homology classes in the Chow group of $\B$, denoted $A_*(\B)$. \cite{hansen} showed that $[\cl{X(w)}]_{w\in W}$ form a basis of $A_*(\B)_\Q$, and we strengthen this result so that it is true in $A_*(\B)_{\Z[1/|G^F|_{p'}]}$ where $|G^F|_{p'}$ is the largest factor of the number of $F$-fixed points in $G$ prime to $\ch \k$. Also we describe $[\cl{X(w)}] \in A_*(\B)$ explicitly using Schubert classes.

Moreover, we analyze properties of $\Y_{w, g}$ for $g\in G$. Note that this variety is related to the construction of character sheaves of Lusztig. (Indeed, $\Y_{w,g}$ is the same as the fiber of $Y_w \rightarrow G$ defined in \cite{lu:char1}.) Especially we are focused on the case when $g=\s$ regular semisimple or $g=\u$ regular unipotent. (The former case is studied in \cite{lu:reflection}, and the latter case is studied in \cite{kawanaka}, \cite{lu:weyltounip}, and \cite{lu:homogeneity}.) In this paper we calculate the class of such varieties in terms of Schubert classes and give some conditions when different elements in $W$ gives the same class in $A_*(\B)$.

\begin{ack}
I thank George Lusztig for suggesting this topic and giving thoughtful comments.
\end{ack}

\section{Notations and definitions}
In this paper $\k$ is an algebraically closed field of characteristic $p$ which can be zero. We denote by $G$ a connected reductive group over $\k$. We fix a Borel subgroup $B$ and a maximal torus $T$ contained in $B$. We denote by $B^-$ the opposite Borel subgroup of $B$. Define $U, U^-$ to be the unipotent radical of $B, B^-$, respectively. If $G$ is defined over a finite field we usually require that $B$ and $T$ are rational.

Let $W$ be the Weyl group of $G$, canonically identified with $N(T)/T$ where $N(T) \subset G$ is the normalizer of $T$. We denote by $S \subset W$ the set of simple reflections and by $\l: W \rightarrow \N$ the standard length function. We usually write $w_0 \in W$ to be the longest element with respect to this length function on $W$. Note that the following notion is well-defined.

\begin{defn} For $w\in W$, $\supp(w) \subset S$, called \emph{the support of $w$}, is the set of simple reflections which appear in some/any reduced expression of $w$.
\end{defn}

Define $\B$ to be the flag variety of $G$, which parametrizes all Borel subgroups of $G$, identified with $G/B$. Note that there exists a Bruhat decomposition
$$ \B \times \B = \bigsqcup_{w \in W} \O_w
$$
where each $\O_w$ is a diagonal $G$-orbit indexed by elements in the Weyl group $W$. For $(B', B'') \in \O_w$ we say that $B'$ and $B''$ are in relative position $w$, denoted $B' \sim_w B''$. We define a Schubert variety as follows.
\begin{defn} For $w \in W$, \emph{the Schubert variety corresponding to $w$}, denoted $C_w$, is a locally closed subvariety of $\B$ defined by 
$$C_w \colonequals \{ B' \in G \mid B \sim_w B'\}.$$
\end{defn}

If $p\neq 0$ and a variety $X$ over $\k$ is defined over $\F_q\subset \k$, then we denote by $F$ a geometric Frobenious morphism corresponding to $\F_q$. Define $X^F$ to be the set of fixed points in $X$ by $F$. Also denote $F(x)$ by $\act{F}x$ for simplicity. For a finite set $A$, define $|A|$ to be the cardinal of $X$. Thus $|X^F|$ is the number of fixed points by $F$ on $X$ if finite. If $G$ is a group over $\F_q$ we let $L: G \rightarrow G : g \mapsto g^{-1}\act{F}g$ be the Lang map. $L$ is surjective if $G$ is a linear algebraic group over $\F_q$. \cite[Theorem 10.1]{steinberg}

We recall the definition of Deligne-Lusztig varieties.

\begin{defn} \cite[1.4]{dl} Suppose $G$ is defined over $\F_q$ and $F$ is the geometric Frobenius corresponding to $\F_q$. For $w \in W$, we define \emph{the Deligne-Lusztig variety associated to $w$} by
\begin{displaymath}
X(w) \colonequals \{ B' \in \B \mid B' \sim_w \act{F}B'\}.
\end{displaymath}
\end{defn}

For $g \in G$, we may also define an analogous variety where $F$ is replaced by $\ad(g)$. As before we denote $\act{g}x \colonequals \ad(g)(x)$ for an object $x$ to which $\ad(g)$ can be applied.

\begin{defn}\label{def:analogue} For $g \in G$ and $w \in W$, we let
\begin{displaymath}
\Y_{w,g} \colonequals \{ B' \in \B \mid B' \sim_w \act{g}B'\}.
\end{displaymath}
\end{defn}
We are interested in some special cases when $g$ is either regular semisimple or regular unipotent. We usually denote by $\s\in G$ a regular semisimple element and by $\u \in G$ a regular unipotent one.

For a variety $X$ over $\k$, define $A_*(X)$ to be \emph{the Chow group of $X$}. If $X$ is smooth over $\k$, there is a natural ring structure on $A_*(X)$ which we also call \emph{the Chow ring of $X$} in this case. For a ring $R$, we denote $A_*(X)_R \colonequals A_*(X) \otimes_{\Z} R$.

For a subvariety we assume that it is always closed. It might be confusing as Schubert varieties and Deligne-Lusztig varieties are in general not closed. But it will be apparent based on the context.
Also for a variety $X$ and a subvariety $Y, Z \subset X$, $Y \cap Z$ always indicates the set-theoretic intersection with reduced scheme structure otherwise specified.

\section{Class of a Deligne-Lusztig variety}
In this section $\ch \k = p \neq 0$ and assume that $G$ is defined over $\F_q \subset \k$ for $q$ some power of $p$. Let $F$ be the geometric Frobenius corresponding to $\F_q$. $X(w)$ is smooth of pure dimension $\ell(w)$ for any $w \in W$ by argument after Definition 1.4. in \cite{dl}. 

We fix a rational Borel subgroup $B$ and a rational maximal torus $T \subset B$ of $G$. We naturally identify $\B \simeq G/B$ and $W \simeq N(T)/T$. 

\subsection{The number of components of $X(w)$}
We first consider the number of (irreducible) components of $X(w)$. To that end we need some lemmas as follows.

\begin{lemma}\label{dl:irred} $X(w)$ is irreducible if and only if $\bigcup_{n \in \N} \supp(\act{F^n}w) = S$.
\end{lemma}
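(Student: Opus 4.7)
Set $J := \bigcup_{n \geq 0} \supp(\act{F^n}w) \subseteq S$. Since $F$ permutes the set $S$ of simple reflections, $J$ is $F$-stable, and $w \in W_J$ (the parabolic subgroup of $W$ generated by $J$) by construction. The plan is to prove the two implications separately: the ``only if'' direction by projecting to a partial flag variety, and the ``if'' direction by induction on $\ell(w)$.

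For the ``only if'' direction, I argue contrapositively: assume $J \subsetneq S$, and let $P_J \supseteq B$ be the standard parabolic of $G$ with Weyl group $W_J$. Since $J$ is $F$-stable, so is $P_J$, and the natural projection $\pi \colon \B \to G/P_J$ is $F$-equivariant. Because $w \in W_J$, for every $B' \in X(w)$ the two flags $B'$ and $\act{F}B'$ lie in the same $P_J$-orbit, so $\pi(B') = \pi(\act{F}B') = \act{F}\pi(B')$, and thus $\pi$ restricts to a map $X(w) \to (G/P_J)^F$. Lang's theorem applied to $P_J$ gives $(G/P_J)^F \cong G^F/P_J^F$, which has strictly more than one element since $P_J \subsetneq G$. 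Using $G^F$-equivariance together with the transitivity of $G^F$ on $(G/P_J)^F$, this restricted map is surjective onto a finite set with at least two elements, so $X(w)$ is a disjoint union of at least two nonempty open-and-closed subvarieties and in particular is not irreducible.

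For the ``if'' direction, assume $J = S$ and prove $X(w)$ is irreducible by induction on $\ell(w)$. In the base case $\ell(w) = 0$, one has $w = 1$ and $J = \emptyset$, so the hypothesis $J = S$ forces $S = \emptyset$; then $G = T$ is a torus and $X(1) = \B$ is a single point. For the inductive step with $\ell(w) \geq 1$, pick $s \in S$ with $\ell(sw) < \ell(w)$ and invoke the Deligne--Lusztig cyclic-shift / factorization lemmas of \cite{dl}: depending on whether $\ell(sw\,\act{F}s)$ equals $\ell(w)$, $\ell(w)-1$, or $\ell(w)-2$, one relates $X(w)$ to $X(w')$ for some $w' \in \{sw,\, sw\,\act{F}s\}$ of strictly smaller length, via either a $G^F$-equivariant isomorphism, an $\A^1$-fibration, or a $\mathbb{G}_m$-fibration. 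In each case irreducibility of $X(w')$ implies irreducibility of $X(w)$, so the induction closes provided one can choose $s$ so that the length-reduced $w'$ still satisfies $\bigcup_n \supp(\act{F^n}w') = S$.

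The main obstacle is precisely this last combinatorial step. Naively replacing $w$ by $sw$ may shrink the support and with it $J$, violating the inductive hypothesis. One must argue, using the $F$-stability of $J$ and the hypothesis $J = S$, that an admissible $s$ always exists; handling the degenerate cases of the cyclic-shift lemma may require a case-by-case geometric argument showing $X(w)$ is already connected. An alternative route, which sidesteps some of this bookkeeping, is to pass to the Bott--Samelson-type cover of $X(w)$ associated to a reduced expression $w = s_1 \cdots s_r$ (an iterated $\A^1$-bundle twisted by $F$), and prove directly that this cover is connected whenever $J = S$, reducing the problem to a combinatorial statement about the $F^n$-translates of $\{s_1, \ldots, s_r\}$ exhausting $S$.
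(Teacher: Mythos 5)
Your ``only if'' direction is correct and is the standard argument: when $J = \bigcup_n \supp(\act{F^n}w) \subsetneq S$, the $F$-equivariant projection $\B \to G/P_J$ carries $X(w)$ into the finite discrete set $(G/P_J)^F$, and $G^F$-equivariance (plus nonemptiness of $X(w)$ and transitivity of $G^F$ on $(G/P_J)^F$) forces the map to be surjective onto a set with at least two points, disconnecting $X(w)$. This part is essentially the content behind Lemma~\ref{dl:ind} and Proposition~\ref{dl:comp} and requires no correction.

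The ``if'' direction, however, is not a proof; you have candidly identified the missing step rather than supplied it, and it is a genuine gap. The induction on $\ell(w)$ via the Deligne--Lusztig cyclic-shift / factorization relations does not close as stated, because passing from $w$ to $sw$ (or to $sw\,\act{F}s$) can strictly shrink $\bigcup_n \supp(\act{F^n}\cdot)$, destroying the inductive hypothesis $J=S$; moreover, in the degenerate case $\ell(sw\,\act{F}s)=\ell(w)-2$ the relation between $X(w)$ and $X(sw)$ is not simply a fibration of one over the other, and does not by itself transport irreducibility. One has to prove a nontrivial combinatorial statement: that whenever $\bigcup_n\supp(\act{F^n}w)=S$, there is a sequence of elementary cyclic shifts ending at an $F$-twisted analogue of a Coxeter element (or at a situation where connectedness is already known), such that the hypothesis is preserved along the way. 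This is precisely the hard part of the theorem --- it is the content of the references that treat this (e.g.\ Lusztig, Bonnaf\'e--Rouquier, G\"ortz) --- and it cannot be waved away by saying ``one must argue that an admissible $s$ always exists.'' Your alternative sketch via a Bott--Samelson-type cover has the same defect: you reduce to ``prove directly that this cover is connected whenever $J = S$'' without proving it. The paper itself does not give a proof either; it simply cites the known result. So for the purpose of matching the paper your ``only if'' direction is an acceptable supplement, but the ``if'' direction needs either a complete combinatorial argument or an explicit citation, since the claim is not self-evident.
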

\begin{proof} \cite[Example 3.10(d)]{lu:chevalley}.
\end{proof}

\begin{lemma}\label{dl:ind} Let $I= \bigcup_{n \in \N} \supp(\act{F^n}w) \subset S$. Suppose $P_I$ is a rational parabolic subgroup corresponding to $I$ which contains $B$. We have a Levi decomposition $P_I = L_I U_I$ where $L_I$ is the (rational) Levi subgroup that contains $T$. Then we have an isomorphism of $G^F\times F^m$-varieties
$$ G^F/U_I^F \times_{L_I^F} X_{L_I}(w) \simeq X(w)
$$
where $X_{L_I}(w)$ is a Deligne-Lusztig variety for $L_I$ if we regard $w$ as an element of $W_I$, the parabolic subgroup of $W$ corresponding to $I$, and where $m \in \mathbb{N}$ satisfies $\act{F^m}w=w$.
\end{lemma}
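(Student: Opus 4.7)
My plan is to construct and analyze the explicit morphism
$$\Phi : G^F \times X_{L_I}(w) \longrightarrow X(w), \qquad (g, B'') \longmapsto g(B'' U_I)g^{-1},$$
where $B'' \in \B_{L_I}$ is identified with the Borel $B'' U_I$ of $G$ via the standard closed immersion $\B_{L_I} \hookrightarrow \B$, whose image is the variety of Borels of $G$ contained in $P_I$. The first step is to observe that this embedding preserves relative positions lying in $W_I$, so the condition $B'' \sim_w \act{F}B''$ in $\B_{L_I}$ transfers to $B'' U_I \sim_w \act{F}(B'' U_I)$ in $\B$; conjugating by $g \in G^F$ then places $\Phi(g, B'')$ in $X(w)$. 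Because $U_I$ lies in every Borel contained in $P_I$, conjugation by $U_I^F$ acts trivially on the image of $\B_{L_I}$, and using $P_I^F = L_I^F U_I^F$ one checks that $\Phi$ is invariant under the right action $(g, B'') \cdot (lu) = (glu, l^{-1}B''l)$ of $P_I^F$, so $\Phi$ descends to
$$\bar\Phi : G^F/U_I^F \times_{L_I^F} X_{L_I}(w) \longrightarrow X(w).$$

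Next I would prove that $\bar\Phi$ is bijective. For surjectivity, given $B' \in X(w)$, let $P'$ be the unique parabolic of type $I$ containing $B'$; since $w \in W_I$, the Borel $\act{F}B'$ also lies in $P'$, while $\act{F}P'$ is a parabolic of type $F(I) = I$ containing $\act{F}B'$, so uniqueness forces $P' = \act{F}P'$. By Lang--Steinberg applied to $G/P_I$, there exists $g \in G^F$ with $P' = gP_I g^{-1}$; then $g^{-1}B'g \subset P_I$ has the form $B'' U_I$ for a unique Borel $B''$ of $L_I$, and $B'' \in X_{L_I}(w)$ follows from $g \in G^F$ together with $B' \sim_w \act{F}B'$. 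For injectivity, if $\Phi(g_1, B''_1) = \Phi(g_2, B''_2)$, their associated parabolics of type $I$ coincide, forcing $g_1^{-1} g_2 \in N_G(P_I) \cap G^F = P_I^F$, which identifies the two points in the fiber quotient.

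Finally, since $G^F$ is finite, the source is a disjoint union of $|G^F/P_I^F|$ copies of $X_{L_I}(w)$, and on each such copy $\bar\Phi$ factors as $\B_{L_I} \hookrightarrow \B$ (a closed immersion) followed by conjugation by a fixed element of $G^F$; combined with bijectivity, this upgrades $\bar\Phi$ to an isomorphism of varieties. The $G^F \times F^m$-equivariance is immediate from the formula, with $G^F$ acting by left multiplication on the first factor and $F^m$ by Frobenius on both factors (well-defined since $F^m w = w$). The main obstacle I anticipate is the bookkeeping around the $P_I^F$-action, particularly verifying that it factors through $L_I^F$ on $X_{L_I}(w)$ in a manner compatible with $\B_{L_I} \hookrightarrow \B$; this ultimately hinges on the geometric fact that $U_I$ lies inside every Borel contained in $P_I$.
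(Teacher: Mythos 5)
Your argument is correct. The paper disposes of this lemma with a bare citation to \cite[Proposition 2.3.8]{dmr:coh}, and what you have written is a self-contained version of exactly the parabolic-induction construction that proof encapsulates: define $\Phi(g,B'')=\act{g}(B''U_I)$, check it descends along the $P_I^F = L_I^F U_I^F$ action, establish surjectivity via $F$-stability of the type-$I$ parabolic through $B'$ (using $F(I)=I$, which holds because $I=\bigcup_n \supp(\act{F^n}w)$ is $F$-stable by construction) together with Lang--Steinberg on $G/P_I$, and establish injectivity via $N_G(P_I)=P_I$. The one place you compress slightly is the upgrade from a bijective morphism to an isomorphism: the clean justification is that each image $\bar\Phi\bigl(\{g_iP_I^F\}\times X_{L_I}(w)\bigr)$ equals $X(w)\cap \act{g_i}\B_{L_I}$, hence is closed in $X(w)$; since these finitely many closed sets are pairwise disjoint and cover $X(w)$ (your bijectivity), each is clopen, and on each $\bar\Phi$ is an immersion, so the whole map is an isomorphism. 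This is what you intended, and it holds.
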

\begin{proof} \cite[Proposition 2.3.8]{dmr:coh}.
\end{proof}
From above we have the following proposition.

\begin{prop}\label{dl:comp} The number of components of $X(w)$ is $|G^F/P_I^F|$, where $P_I$ is as defined in Lemma \ref{dl:ind}. Moreover, $G^F$ acts transitively on the set of components of $X(w)$.
\end{prop}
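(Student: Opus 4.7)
The plan is to combine the two preceding lemmas: use Lemma \ref{dl:ind} to identify $X(w)$ with an induced variety from $L_I$, and then invoke Lemma \ref{dl:irred} inside $L_I$ to see that the induced piece is already irreducible, so that counting components reduces to counting $G^F$-translates.

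First I would verify that Lemma \ref{dl:irred} applies to $L_I$. Since $L_I$ is $F$-stable, so is $W_I$, and the sequence $\{\act{F^n}w\}_{n\in\N}$ remains in $W_I$; moreover the support of an element of $W_I$ computed in $W_I$ coincides with its support in $W$. By the very definition of $I$, the union of these supports equals $I$, which is precisely the set of simple reflections of $L_I$. Hence Lemma \ref{dl:irred} shows $X_{L_I}(w)$ is irreducible.

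Next I would unpack the isomorphism of Lemma \ref{dl:ind}. Since $X_{L_I}(w) \subset \B_{L_I}$ and $U_I^F$ acts trivially on $\B_{L_I}$, the isomorphism may be rewritten as
$$X(w) \simeq G^F \times_{P_I^F} X_{L_I}(w).$$
As $P_I^F$ is a finite group, choosing representatives $\{g_i\}$ of $G^F/P_I^F$ exhibits the right-hand side as a disjoint union $\bigsqcup_i g_i \cdot X_{L_I}(w)$ of $|G^F/P_I^F|$ translates inside $\B$, where $g \cdot X_{L_I}(w) := \{gB'g^{-1} : B' \in X_{L_I}(w)\}$. To conclude that these translates are the actual irreducible components of $X(w)$ rather than overlapping pieces, I would verify disjointness: if $gX_{L_I}(w)g^{-1}$ meets $g'X_{L_I}(w)g'^{-1}$, then $h := g'^{-1}g$ conjugates some Borel $B' \subset P_I$ to another Borel contained in $P_I$; since all Borels of $G$ contained in $P_I$ are $P_I$-conjugate and $P_I$ is its own normalizer in $G$, this forces $h \in P_I$, and hence $h \in P_I \cap G^F = P_I^F$ by $F$-stability of $P_I$.

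Each of the $|G^F/P_I^F|$ disjoint translates is isomorphic to $X_{L_I}(w)$ and therefore irreducible, giving the claimed count. The $G^F$-action on $X(w)$ by conjugation corresponds to left multiplication on the first factor of $G^F \times_{P_I^F} X_{L_I}(w)$, which permutes the translates via the natural transitive action on $G^F/P_I^F$; this yields the transitivity statement. I expect the disjointness check in the previous paragraph to be the only slightly delicate step, since it relies on the self-normalizing property of parabolics; everything else is essentially bookkeeping from the two lemmas already at hand.
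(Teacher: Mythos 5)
Your proof follows the paper's approach exactly: Lemma \ref{dl:irred} applied to $L_I$ gives irreducibility of $X_{L_I}(w)$, and Lemma \ref{dl:ind} then identifies the set of components with $G^F/P_I^F$ as a $G^F$-set. The disjointness check you flag as the delicate step is actually automatic, since the map in Lemma \ref{dl:ind} is already an isomorphism of varieties (hence a bijection on points), so the translates cannot overlap; your self-normalizing-parabolic argument is a correct but redundant verification, and you should write the translate as $\{g(B'U_I)g^{-1} : B' \in X_{L_I}(w)\}$ rather than $\{gB'g^{-1}\}$, since a Borel of $L_I$ is not itself a Borel of $G$.
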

\begin{proof} Since $X_{L_I}(w)$ is irreducible by Lemma \ref{dl:irred}, the components of $X(w)$ is in bijection with the points of $G^F/U_I^FL_I^F = G^F/P_I^F$ as $G^F$-sets by Lemma \ref{dl:ind}, from which the result follows.
\end{proof}

\subsection{Linear independence of $[\cl{X(w)}]$ in the Chow group of a flag variety}
We are interested in the class $[\cl{X(w)}] \in A_*(\B)$. First we show that $\{[\cl{X(w)}]\}_{w \in W}$ form a basis of $A_*(\B)_\Q$.

\begin{prop}\label{dl:indep} $\{[\cl{X(w)}]\}_{w \in W}$ is a linear basis of $A_*(\B)_\Q$.
\end{prop}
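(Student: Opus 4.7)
Since $X(w)$ is smooth of pure dimension $\l(w)$, we have $[\cl{X(w)}]\in A_{\l(w)}(\B)$, and each $A_i(\B)$ has Schubert $\Z$-basis $\{[\cl{C_v}]:\l(v)=i\}$. Because the number of $w\in W$ of length $i$ equals the rank of $A_i(\B)$, it suffices to show, for each $i$, that the transition matrix from $\{[\cl{X(w)}]\}_{\l(w)=i}$ to the Schubert basis $\{[\cl{C_v}]\}_{\l(v)=i}$ is invertible over $\Q$.

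The strategy is to compute this matrix via a pull-back from $\B\times\B$. The assignment $\phi\colon\B\to\B\times\B$, $B'\mapsto(B',\act{F}B')$, is a closed embedding whose image meets $\cl{\O_w}$ in $\cl{X(w)}$ with the expected dimension, hence $[\cl{X(w)}]=\phi^*[\cl{\O_w}]$ in $A^*(\B)$. Factoring $\phi=(\mathrm{id}\times F)\circ\Delta$ gives $\phi^*=\Delta^*\circ(\mathrm{id}\otimes F^*)$ under the Künneth identification $A^*(\B\times\B)\cong A^*(\B)\otimes A^*(\B)$. Because $B$ and $T$ are rational, each Schubert cell is $F$-stable, and $F$ restricts on $\cl{C_v}$ to a Frobenius morphism of an $\l(v)$-dimensional variety; combined with $\deg(F\colon\B\to\B)=q^{\dim\B}$ and the projection formula, this forces $F^*$ to act on codimension-$c$ Schubert classes by multiplication by $q^{c}$. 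Expanding $[\cl{\O_w}]=\sum_{u,v}c_{w;u,v}\,\sigma_u\otimes\sigma_v$ in the Künneth basis then yields
\[
[\cl{X(w)}] = \sum_{u,v}\,c_{w;u,v}\,q^{\l(v)}\,\sigma_u\cdot\sigma_v \in A^*(\B),
\]
where $\sigma_u,\sigma_v$ denote Schubert classes (in codimension $\l(u),\l(v)$ respectively) and the $c_{w;u,v}\in\Z_{\geq 0}$ are Schubert structure constants on $\B\times\B$.

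The main obstacle is to show this transition matrix is invertible. The term of maximal $q$-degree arises from the summands with $\l(v)=\dim\B-\l(w)$, which forces $u=e$; intersecting $[\cl{\O_w}]$ with the slice $\{B\}\times\B\subset\B\times\B$ yields $\{B\}\times\cl{C_w}$, and extracting the resulting Künneth coefficient pins the leading contribution down to exactly $q^{\dim\B-\l(w)}\,[\cl{C_w}]$. Consequently, restricted to the length-$i$ block, the transition matrix takes the form $q^{\dim\B-i}\,I+M'$ with $M'$ having entries that are polynomials in $q$ of strictly lower degree, so its determinant is a monic polynomial in $q$ of degree $n_i(\dim\B-i)$, where $n_i=|\{w:\l(w)=i\}|$; in particular this determinant is nonzero as a polynomial, proving linear independence over $\Q(q)$. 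The delicate remaining point is to conclude nonvanishing at the specific prime power $q$ at hand; this can be deduced from a finer analysis of the lower-order coefficients using positivity of the Schubert structure constants, or directly from the result of \cite{hansen}.
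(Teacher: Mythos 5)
Your approach—pulling back $[\cl{\O_w}]$ through the graph of $F$, using the Künneth decomposition, and analyzing the transition matrix as a polynomial in $q$—is genuinely different from the paper's proof of this proposition. The paper instead works on $G/T$ and introduces ``$\Q$-dual'' varieties $Y(w) = \{gT \mid L(g)\in B^-wB\}$; it then shows $\cl{\varphi^{-1}(X(w))}\cap\cl{Y(w')}$ is nonempty (for $\ell(w)=\ell(w')$) precisely when $w=w'$, which gives linear independence directly by a nondegenerate pairing argument, with no polynomial-in-$q$ analysis at all. Interestingly, the framework you set up (pullback of $[\cl{\O_w}]$ along $(\mathrm{id},F)$, the formula $F^*=q^c$ on codimension-$c$ Schubert classes) is essentially what the paper develops \emph{later} in Section 3.3 to prove the sharper Theorem \ref{dl:realbasis}; so your instinct is sound and your setup is the ``right'' one for the stronger result.

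However, as written your proof has a genuine gap, which you yourself flag at the end. Showing the length-$i$ block has determinant a monic polynomial in $q$ of positive degree only establishes linear independence over $\Q(q)$, i.e.\ nonvanishing of the determinant as a polynomial; it does not establish nonvanishing at the particular prime power $q = |\F_q|$ where the Deligne--Lusztig variety actually lives. The appeal to ``a finer analysis using positivity'' is not carried out, and citing \cite{hansen} for the conclusion defeats the purpose. The cleanest way to close the gap within your own framework is to look at the \emph{constant} term of the determinant rather than the leading term: setting $q=0$ in your transition matrix formula kills all terms except $u$-$v$ pairs with $\ell(v)=0$, which (by the constraint $u w \,{}^F v^{-1}=w_0$ with $v=e$) singles out exactly $[\cl{C_{w^{-1}}}]$; thus the constant term of each block determinant is $\pm 1$, so $d(q)\equiv\pm1\pmod{q}$ and in particular $d(q)\neq 0$ for any integer $q\geq 2$. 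This is essentially the observation the paper makes in the proof of Theorem \ref{dl:realbasis} (there phrased as reduction mod $p$), and it would in fact prove more than Proposition \ref{dl:indep}—linear independence over $\Z[1/M]$ rather than just $\Q$. As it stands, though, the leading-term analysis alone does not suffice, and the proof is incomplete.
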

\begin{proof} We slightly revise the proof of \cite[Proposition 3.19]{hansen}. Since the cardinal of $\{[\cl{X(w)}]\}_{w \in W}$ is the same as the dimension of $A_*(\B)_\Q$, it suffices to show that they are linearly independent. Recall that we fixed a rational Borel subgroup $B$ and a rational maximal torus $T \subset B$ of $G$. It is known that the natural projection $\varphi : G/T \rightarrow G/B$ gives an isomorphism (up to degree shift) of Chow groups
$$\varphi^* : A_*(G/B) \rightarrow A_*(G/T).
$$
Thus it suffices to show that $\{\varphi^*[\cl{X(w)}]\}_{w\in W} = \{[\varphi^{-1}(\cl{X(w)})]\}_{w\in W}$ are linearly independent in $A_*(G/T)_\Q$. To that end we define the "$\Q$-dual", denoted $Y(w)$ for $w\in W$, as the following. Recall that $L: G \rightarrow G$ is the Lang map.
$$Y(w):= \{ gT \in G/T \mid L(g) \in B^- w B\}$$
Here $B^-$ is the opposite Borel subgroup with respect to $B$. Note that $Y(w)$ is well-defined and of dimension $\ell(w_0) - \ell(w)+ \dim U$ where $w_0 \in W$ is the longest element in $W$ and $U \subset B$ is the unipotent radical of $B$.

Let $\tilde{\pi} : G \rightarrow G/T$ be the obvious projection map. For $w, w' \in W$, we have
\begin{align*}
\cl{\varphi^{-1}(X(w))} \cap \cl{Y(w')} &= \tilde{\pi} (L^{-1}(\cl{BwB} \cap \cl{B^- w' B}))
\end{align*}
since $\tilde{\pi}$ and $L$ are fiber bundles. Now if $\ell(w) =\ell(w'),$ $\cl{BwB} \cap \cl{B^- w' B}$ is nonempty if and only if $w=w'$ and thus so is $\cl{\varphi^{-1}(X(w))} \cap \cl{Y(w)}$. Thus $[\cl{\varphi^{-1}(X(w))}]$ are linearly independent for $w\in W$ of a fixed length, which means that $[\cl{\varphi^{-1}(X(w))}]$ are linearly independent for all $w\in W$. This is what we want to prove.
\end{proof}

Indeed, it is even true if we replace $\Q$ by $\Z[1/(q|G^F|)]$. Later we refine this result so that we only need to invert "prime to $p$" part of $|G^F|$.
\begin{prop} \label{dl:basis}Let $N=q|G^F|$. Then $[\cl{X(w)}]_{w\in W}$ form a linear basis of $A_*(\B)_{\Z[1/N]}$.
\end{prop}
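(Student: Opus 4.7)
Since $\{[\cl{C_v}]\}_{v \in W}$ is a free $\Z$-basis of $A_*(\B)$ of cardinality $|W|$, proving the basis property reduces to showing the $|W|$ classes $\{[\cl{X(w)}]\}_{w \in W}$ are $R$-linearly independent for $R \colonequals \Z[1/N]$. The plan is to refine the intersection argument of Proposition \ref{dl:indep}, showing that the relevant pairing matrix is diagonal with diagonal entries in $R^\times$.

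As in \ref{dl:indep}, I would work in $A_*(G/T)_R$ via the iso $\varphi^* : A_*(\B)_R \xrightarrow{\sim} A_*(G/T)_R$ and pair $\alpha_w \colonequals \varphi^*[\cl{X(w)}]$ against the dual classes $\beta_v \colonequals [\cl{Y(v)}]$. For $\ell(w) = \ell(v)$, the intersection
\[
\cl{\varphi^{-1}(X(w))} \cap \cl{Y(v)} = \tilde\pi\bigl(L^{-1}(\cl{BwB} \cap \cl{B^-vB})\bigr)
\]
is empty for $w \neq v$ (so $\alpha_w \cdot \beta_v = 0$), while for $w = v$ one gets $L^{-1}(wB)/T$, since $\cl{BwB} \cap \cl{B^-wB}$ equals $wB$ scheme-theoretically (the transverse intersection $\cl{C_w} \cap \cl{C^w} = \{wB\}$ in $\B$ lifted via the flat map $G \to \B$). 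This intersection class lies in $A_{\ell(w_0)}(G/T) = A_{\dim U}(G/T) \cong A_0(G/B) \cong \Z$ (the last iso via $\varphi^*$), so may be identified with an integer $c_w$; within each length-$k$ block, the pairing matrix is the diagonal $(c_w)_{\ell(w) = k}$.

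The key new input is that each $c_w$ is a nonzero integer dividing $|G^F|$, hence a unit in $R$. This should follow from a direct geometric analysis of $L^{-1}(wB)/T$ using that $L$ is a finite \'etale $G^F$-cover: fiberwise over $X(w) \subset \B$, the class $c_w$ decomposes into cyclotomic-in-$q$ local contributions that jointly divide $|G^F|$ (e.g.\ in $\mathrm{SL}_2$ one finds $c_e = q+1$ and $c_s = q-1$, both dividing $|G^F| = q(q^2-1)$). Given this, the pairing matrix is invertible over $R$, whence the $\alpha_w$ are $R$-linearly independent, which is what we wanted.

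\textbf{Main obstacle.} The delicate step is establishing the uniform divisibility $c_w \mid |G^F|$. One approach is to use the $G^F$-transitive action on the components of $X(w)$ (Proposition \ref{dl:comp}) together with the factorization of $|G^F|$ into cyclotomic polynomials in $q$, matching these against the local contributions above. The extra factor $q$ in $N$ (beyond $|G^F|$) is there to handle potential $p$-torsion in Chow groups in characteristic $p$, which could otherwise obstruct integral statements during the pullback/pushforward manipulations along $L$ and $\tilde\pi$.
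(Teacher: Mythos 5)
Your plan takes a genuinely different route from the paper, but it is not a proof: you have explicitly flagged the key step as unresolved, and that step is where all the difficulty lives. The paper does not refine the pairing computation of Proposition \ref{dl:indep} at all. Instead it passes to the quotient $\psi: \B \to G^F\backslash\B$, builds a pullback $\psi^*$ on Chow groups over $\Z[1/N]$ in the spirit of Fulton's Example~1.7.6 (the factor $q$ in $N$ is exactly what makes the inseparability degrees invertible), shows $\psi^*$ and $\psi_*$ are inverse isomorphisms up to a unit, and then proves Lemma~\ref{dl:aux} --- that the $[G^F\backslash\cl{X(w)}]$ form a basis of $A_*(G^F\backslash\B)_{\Z[1/N]}$ --- by an inductive stratification argument using the $T^{wF}$-cover $\tilde X(\dot w)$ and the torsors $L^{-1}(\dot w U)\to \dot w U$. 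The class $[\cl{X(w)}]$ then differs from $\psi^*[G^F\backslash\cl{X(w)}]$ by a unit, and one is done. Nowhere does the paper compute an intersection pairing $c_w$.

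The gap in your proposal is substantive, not cosmetic. The integer $c_w$ is the class of $L^{-1}(wB)/T$ inside $A_{\ell(w_0)}(G/T)\cong\Z$. Because $L$ is a $G^F$-torsor and you are in characteristic $p$ with $p\mid |G^F|$, the cover $L^{-1}(wB)\to wB$ need not split: $wB\cong \A^{\dim B}$ has abundant Artin--Schreier covers, so you cannot simply count $|G^F|$ sheets and conclude anything about the class. Neither the asserted "cyclotomic local contributions" nor the divisibility $c_w\mid |G^F|$ is justified, and it is not clear how the $G^F$-transitivity on components of $X(w)$ would produce this divisibility. There are two further issues you would need to address even if $c_w$ were controlled: (i) $R$-linear independence of $|W|$ elements in a free $R$-module of rank $|W|$ does not imply they form an $R$-basis when $R=\Z[1/N]$ is not a field (e.g.\ $\{2\}\subset\Z$), so the opening reduction is not valid as stated --- you need the transition determinant to be a unit, not merely nonzero; and (ii) to convert "the pairing matrix against the $\beta_v$ is diagonal with unit entries" into "the $\alpha_w$ form a basis" you need to know that the $\beta_v$ span a complement (or that the pairing is perfect over $R$), which is not established. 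Interestingly, the paper's later Theorem~\ref{dl:realbasis} does compute the transition determinant and shows it is a unit in $\Z[1/|G^F|_{p'}]$, but it does so via the explicit Schubert-class formula of Theorem~\ref{dl:class}, not via an intersection pairing over $G/T$. If you want to pursue your pairing approach, that formula is probably where the divisibility you are after actually comes from, rather than from a direct analysis of $L^{-1}(wB)/T$.
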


\begin{proof}  Consider the quotient morphism $\psi: \B \rightarrow G^F\backslash \B$ which is finite of degree $|G^F|$. We construct $\psi^* : A_*(G^F\backslash \B)_{\Z[1/N]} \rightarrow A_*(\B)_{\Z[1/N]}$ by mimicking \cite[Example 1.7.6]{fulton}. Indeed, for any irreducible subvariety $D \subset \B$, let
$$I_D = \{ g \in G^F \mid g|_D=id_D\}, \qquad e_D = \frac{|I_D|}{\deg_i(D/\psi(D))}$$
where $\deg_i(D/\psi(D))$ is the degree of inseparability of $\k(D)$ over $\k(\psi(D))$. Note that $\deg_i(D/\psi(D))$ is a power of $p = \textup{char } \k$ thus is invertible in $\Z[1/N]$. For a variety $X$, define $Z_*(X)$ to be a free abelian group generated by irreducible subvarieties of $X$. For an irreducible subvariety $D' \subset G^F\backslash \B$ we define
$$\psi^*: Z_*(G^F\backslash \B)_{\Z[1/N]} \rightarrow Z_*(\B)_{\Z[1/N]}: [D'] \mapsto \sum_{D\subset \psi^{-1}(D')} e_D[D]$$
where the sum is over all the irreducible components $D$ of $\psi^{-1}(D')$. It is clear that it descends to a morphism $A_*(G^F\backslash \B)_{\Z[1/N]} \rightarrow A_*( \B)_{\Z[1/N]}$, which we again denote by $\psi^*$. It is known \cite[Example 1.7.6]{fulton} that it is an isomorphism after base change to $\Q$, and the endomorphism $\psi_*\circ \psi^*$ (where $\psi_* :  A_*(\B)_{\Z[1/N]} \rightarrow A_*(G^F\backslash \B)_{\Z[1/N]}$ is a proper push-forward) on $A_*(G^F\backslash \B)_{\Z[1/N]}$ is multiplication by $|G^F|$, hence indeed an automorphism.

We claim that $\psi^*$ is surjective. Indeed, for any irreducible subvariety $D_0 \subset G^F\backslash \B$, let $D'$ be the image of $D_0$ under $\psi$. We have
$$\psi^*([D']) = \sum_{D \subset \psi^{-1}(D')}e_D[D]$$
where the sum is over all the irreducible components $D$ of $\psi^{-1}(D')$. However, as $G^F \subset G$ acts trivially on the Chow group and $G^F$ acts transitively on the set of irreducible components of $\psi^{-1}(D')$, we have $\psi^*([D']) = ne_{D_0}[D_0]$ for $n$ the number of irreducible components of $\psi^{-1}(D')$. Thus we have
$$\frac{1}{n e_{D_0} \deg(D_0/D')}\psi^*\psi_*([D_0]) = \frac{1}{n e_{D_0}}\psi^*([D']) = [D_0].$$
Note that $n e_{D_0} \deg(D_0/D')$ is invertible in $\Z[1/N]$. Indeed, clearly $n$ divides $|G^F|$ and $e_{D_0}$ is a unit as we observed above. For $\deg(D_0/D')$, the separable degree divides $|G^F|$ and the inseparable degree is a power of $p$ which is invertible in $\Z[1/N]$.

Since $\psi_*\circ \psi^*$ is an automorphism and $\psi^*$ is surjective, it follows that $\psi^*$ and $\psi_*$ are isomorphisms. In particular,  $A_*( G^F\backslash\B)_{\Z[1/N]}$ is free as $A_*(\B)_{\Z[1/N]}$ is free. Now we need a following lemma which will be proved later. (Note that $G^F\backslash \cl{X(w)}=\cl{G^F\backslash X(w)}$ is irreducible by Proposition \ref{dl:comp}.)
\begin{lemma} \label{dl:aux}$\{[G^F\backslash \cl{X(w)}]\}_{w \in W}$ is a basis of $A_*(G^F\backslash\B)_{\Z[1/N]}$.
\end{lemma}
From Proposition \ref{dl:comp}, we have
$$\psi^*([G^F\backslash \cl{X(w)}]) = \frac{|G^F|}{|P_I^F|}e_D[D] = e_D [\cl{X(w)}]$$
where $D$ is some irreducible component of $\cl{X(w)}$. Again this is independent of the choice of $D$. Since $e_D$ is invertible in $\Z[1/N]$ and $\psi^*$ is an isomorphism, it follows from Lemma  \ref{dl:aux} that $\{[\cl{X(w)}]\}_{w\in W}$ is a basis of $A_*(\B)_{\Z[1/N]}$ as desired.
\end{proof}

\begin{proof}[Proof of Lemma \ref{dl:aux}] 
If suffices to show that $\{[G^F\backslash \cl{X(w)}]\}_{w\in W}$ generates $A_*(G^F\backslash \B)_{\Z[1/N]}$. We label elements in $w \in W$ by $c: W \rightarrow \{1, \cdots, |W|\}$ such that $w\geq v \Rightarrow c(w) \geq c(v)$. (In other words, we linearize the Bruhat order of $W$.) For $1\leq n \leq |W|$, define $V_n = \bigcup_{c(w) \leq n} G^F\backslash X(w)$ which is a subvariety of $G^F\backslash \B$ not necessarily irreducible. 

We prove that $A_*(V_n)_{\Z[1/N]}$ is generated by $\{[G^F\backslash \cl{X(w)}]\}_{c(w) \leq n}$ by induction on $n$, which implies the claim when $n = |W|$. For $n=1$ it is trivial. In general, let $w \in W$, $n=c(w)$ and recall the following exact sequence.
$$A_*(V_{n-1})_{\Z[1/N]}\rightarrow A_*(V_n)_{\Z[1/N]} \rightarrow A_*(G^F\backslash X(w))_{\Z[1/N]} \rightarrow 0$$
We show that $A_k(G^F\backslash X(w))_{\Z[1/N]}=\Z[1/N]$ if $k = \ell(w)$ and 0 otherwise. Indeed, define $\pi: \tilde{X}(\dot{w}) \rightarrow X(w)$ to be the $T^{wF}$-torsor on $X(w)$ as in \cite[1.8]{dl} corresponding to a fixed representative $\dot{w} \in N(T)$ of $w\in W$. It descends to a morphism $\bar{\pi} : G^F \backslash \tilde{X}(\dot{w}) \rightarrow G^F \backslash X(w)$ which is also a quotient morphism by $T^{wF}$. (This is not a $T^{wF}$-torsor in general.) Using the same argument as in Proposition \ref{dl:basis} we construct $\bar{\pi}^* : A_*(G^F \backslash X(w))_{\Z[1/N']} \rightarrow A_*(G^F \backslash \tilde{X}(\dot{w}))_{\Z[1/N']}$ where $N' = q|T^{wF}|$. But $T^{wF}$ is isomorphic to the group of Frobenius-fixed elements in a rational torus of type $w$ of $G$\cite[Definition 3.24]{dm:book}, thus $N'$ divides $N$. As a result $\bar{\pi}^*$ is well-defined over $\Z[1/N]$.

Consider
$$A_*(G^F \backslash X(w))_{\Z[1/N]} \xrightarrow{\bar{\pi}^*} A_*(G^F \backslash \tilde{X}(\dot{w}))_{\Z[1/N]} \xrightarrow{\bar{\pi}_*} A_*(G^F \backslash X(w))_{\Z[1/N]}$$
where the composition is multiplication by $|T^{wF}|$, hence an automorphism. Now we claim the following.
\begin{claim}
$A_k(G^F \backslash \tilde{X}(\dot{w}))_{\Z[1/N]} = \Z[1/N]$ if $k = \ell(w)$, 0 otherwise.
\end{claim}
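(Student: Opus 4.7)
The plan is to identify $G^F\backslash\tilde{X}(\dot{w})$, as a variety, with an affine space of dimension $\ell(w)$; the claim then follows immediately from $A_k(\A^n)_R = R$ when $k=n$ and $0$ otherwise.

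First I would unwind the description of $\tilde{X}(\dot{w})$ from \cite[1.8]{dl}, realizing it as a geometric quotient
\[
\tilde{X}(\dot{w}) \;\cong\; L^{-1}(\dot{w}U)\big/ V, \qquad V := U \cap \dot{w}U\dot{w}^{-1},
\]
where $V$ acts on the right by multiplication (the condition $L(g) \in \dot{w}U$ is preserved because $\dot{w}^{-1}V\dot{w}\subset U$ and $F(U)=U$). By Lang--Steinberg, $L$ is a $G^F$-torsor, so restricting gives $G^F\backslash L^{-1}(\dot{w}U) \cong \dot{w}U$. Since the left $G^F$- and right $V$-actions on $L^{-1}(\dot{w}U)$ commute, this descends to
\[
G^F\backslash\tilde{X}(\dot{w}) \;\cong\; \dot{w}U\big/V,
\]
a smooth variety of dimension $\dim U - \dim V = \ell(w_0) - (\ell(w_0)-\ell(w)) = \ell(w)$. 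A direct computation with the Lang map shows the induced right $V$-action on $\dot{w}U$ is the twisted formula $\dot{w}u\cdot v = \dot{w}\,(\dot{w}^{-1}v^{-1}\dot{w})\,u\,F(v)$.

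Next I would show $\dot{w}U/V \cong \A^{\ell(w)}$. Using the root subgroup decomposition, $V = \prod_{\alpha>0,\;w^{-1}\alpha>0}U_\alpha$; set $U' := \prod_{\alpha>0,\;w^{-1}\alpha<0}U_\alpha$ (of dimension $\ell(w)$), and choose an ordering on positive roots so that multiplication $U'\times V \to U$ is an isomorphism of varieties. The claim is that the composition $\dot{w}U' \hookrightarrow \dot{w}U \twoheadrightarrow \dot{w}U/V$ is an isomorphism, which by dimension count reduces to checking that every $V$-orbit on $\dot{w}U$ meets $\dot{w}U'$ in exactly one point.

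The main obstacle is precisely this last check. The twisted action mixes left multiplication by $\dot{w}^{-1}v^{-1}\dot{w}$ and right multiplication by $F(v)$, so the quotient is not visibly a product. I expect to handle this by induction along a filtration of $U$ by root subgroups, using that $F$ permutes root subgroups and that $V,U'$ are in standard position inside $U$, reducing each inductive step to a one-dimensional $\mathbb{G}_a$-calculation. Once this is done, $G^F\backslash\tilde{X}(\dot{w})\cong\A^{\ell(w)}$, and the Chow group statement is immediate.
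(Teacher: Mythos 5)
Your proposal contains a genuine gap at the step you yourself flag as "the main obstacle," but the gap is worse than you anticipate: the bijection $\dot{w}U' \to \dot{w}U/V$ is in general \emph{not} an isomorphism of varieties, so no amount of careful root-subgroup bookkeeping can rescue the slice argument as stated. The culprit is the Frobenius twist in the action $x\cdot y = x^{-1}y\,{}^{F}x$, which makes the quotient map purely inseparable along the slice. Concretely, take $G=SL_3$, $w=s_1s_2$, so $V = U_{\alpha_2}$ and $U' = U_{\alpha_1}U_{\alpha_1+\alpha_2}$; writing $u = u_{\alpha_1}(a)u_{\alpha_2}(b)u_{\alpha_1+\alpha_2}(c)$ and $v=u_{\alpha_2}(s)$, the action is $(a,b,c)\mapsto(a-\epsilon s,\,b+s^q,\,c)$ for a nonzero constant $\epsilon$. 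The invariant ring is generated by $c$ and $f:=\epsilon^{q}b+a^{q}$, so $\dot{w}U/V \cong \A^2$ does hold here, but the composite $\dot{w}U'\hookrightarrow\dot{w}U\to\dot{w}U/V$ sends $(a,c)\mapsto(a^q,c)$, a bijective morphism that is purely inseparable of degree $q$ in the first coordinate. This is exactly the characteristic-$p$ pitfall: a bijective morphism of smooth varieties of equal dimension need not be an isomorphism, so "by dimension count reduces to checking that every $V$-orbit meets $\dot{w}U'$ in exactly one point" is a false reduction. Your plan cannot conclude.

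The paper deliberately avoids ever claiming $G^F\backslash\tilde{X}(\dot{w})$ is an affine space, precisely because of this issue. Its proof works entirely at the level of Chow groups with $\Z[1/N]$ coefficients: it factors the computation through the square with $L^{-1}(\dot{w}U)$ on top, uses a transfer homomorphism $g'^{*}$ (in the style of Fulton, Example 1.7.6) to identify $A_*(\dot{w}U)_{\Z[1/N]}$ with $(A_*(L^{-1}(\dot{w}U))_{\Z[1/N]})^{G^F}$ — here inverting $|G^F|$ and the inseparability degrees is exactly what is needed — and a flat pullback $f'^{*}$ along the genuine $U\cap{}^{\dot{w}}U$-torsor $L^{-1}(\dot{w}U)\to\tilde{X}(\dot{w})$ (which, unlike $f$, really is a torsor) to supply the degree shift by $\ell(w_0)-\ell(w)$. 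This is robust against the inseparability that kills your slice argument, at the cost of only proving the claim with $\Z[1/N]$ coefficients rather than integrally. If you want to salvage your approach, you would need to show directly that the $V$-invariant subring of $\k[\dot{w}U]$ is a polynomial ring of Krull dimension $\ell(w)$ (as the $SL_3$ example suggests), which is a genuinely different and harder statement than exhibiting a slice.
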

If this is true, then it implies that  $A_k(G^F \backslash X(w))_{\Z[1/N]}=0$ for $k \neq \ell(w)$. Also clearly $A_{\ell(w)}(G^F \backslash X(w)) = \Z[1/N]$ since $G^F \backslash X(w)$ is irreducible of dimension $\ell(w)$. Therefore for $k \neq \ell(w)$ the above map $A_k(V_{n-1})_{\Z[1/N]}\rightarrow A_k(V_n)_{\Z[1/N]}$ is surjective. For $k=\ell(w)$ it is also clear that $A_{\ell(w)}(V_n)_{\Z[1/N]}$ is generated by its $\ell(w)$-dimensional irreducible components, which are $[G^F\backslash \cl{X(v)}]$ for $\{ v\in W \mid c(v) \leq n, \ell(v) = \ell(w)\}$. By induction, the result follows.

Now we prove the claim. Indeed, consider the following diagram.
$$\xymatrixcolsep{5pc}\xymatrix{\tilde{X}(\dot{w}) \simeq L^{-1}(\dot{w}U)/U\cap \act{\dot{w}}U \ar[d]^g& L^{-1}(\dot{w}U) \ar[l]_-{f'}\ar[d]^{g'}\\
G^F\backslash\tilde{X}(\dot{w}) & \dot{w}U \ar[l]_f}
$$
Here $f$ is a quotient by $U \cap \act{\dot{w}}U$, $g$ is a quotient by $G^F$, $f'$ is a $U \cap \act{\dot{w}}U$-torsor, and $g'$ is a $G^F$-torsor. The action of $U\cap \act{\dot{w}}U$ on $L^{-1}(\dot{w}U)$ is given by right multiplication, and that on $\dot{w}U$ is given by
$$x \in U\cap \act{\dot{w}}U, \quad y \in \dot{w}U, \quad\quad x\cdot y \colonequals x^{-1}y\act{F}x.$$
$G^F$ acts on $L^{-1}(\dot{w}U)$ and $\tilde{X}(\dot{w})$ by left multiplication. For detailed description of these morphisms one may refer to \cite[Corollary 1.12]{dl}. 

As before we have morphisms of Chow groups \cite[Example 1.7.6]{fulton}
$$A_*(\dot{w}U)_{\Z[1/N]} \xrightarrow{g'^*} (A_*(L^{-1}(\dot{w}U))_{\Z[1/N]})^{G^F} \xrightarrow{g'_*} A_*(\dot{w}U)_{\Z[1/N]}.$$
We claim that $g'^*: A_*(\dot{w}U)_{\Z[1/N]} \rightarrow (A_*(L^{-1}(\dot{w}U))_{\Z[1/N]})^{G^F}$ is surjective. Indeed, for any $E \in (Z_*(L^{-1}(\dot{w}U))_{\Z[1/N]})^{G^F}$, $E = \frac{1}{|G^F|}\sum_{g \in G^F}g_*E$. (For a variety $X$, $Z_*(X)$ is a free abelian group generated by irreducible subvarieties of $X$ as in the proof of Proposition \ref{dl:basis}.) But for any irreducible subvariety $V \subset L^{-1}(\dot{w}U)$, $\frac{1}{|G^F|}\sum_{g \in G^F}g_*[V]$ is in the image of $g'^*$ by similar argument to the proof of Proposition \ref{dl:basis}. Thus $g'^*: Z_*(\dot{w}U)_{\Z[1/N]} \rightarrow (Z_*(L^{-1}(\dot{w}U))_{\Z[1/N]})^{G^F}$ is surjective. But since taking $G^F$-invariant is an exact functor as $|G^F|$ is invertible in $\Z[1/N]$, $(Z_*(L^{-1}(\dot{w}U))_{\Z[1/N]})^{G^F} \rightarrow (A_*(L^{-1}(\dot{w}U))_{\Z[1/N]})^{G^F}$ is surjective. It follows that $g'^*: A_*(\dot{w}U)_{\Z[1/N]} \rightarrow (A_*(L^{-1}(\dot{w}U))_{\Z[1/N]})^{G^F}$ is also surjective.

Since $g'_*\circ g'^*$ is a multiplication by $|G^F|$, hence an automorphism, the above argument implies that $g'_*$ and $g'^*$ are both isomorphisms. Since $A_k(\dot{w}U) = \Z[1/N]$ if $k=\ell(w_0)$ and 0 otherwise, so is $(A_k(L^{-1}(\dot{w}U))_{\Z[1/N]})^{G^F}$. Also similar argument shows that we have an isomorphism $g^*: A_*(G^F\backslash \tilde{X}(\dot{w}))_{\Z[1/N]} \rightarrow (A_*(\tilde{X}(\dot{w}))_{\Z[1/N]})^{G^F}$. Thus it suffices to show that $(A_*(\tilde{X}(\dot{w}))_{\Z[1/N]})^{G^F}$ is isomorphic to $(A_*(L^{-1}(\dot{w}U))_{\Z[1/N]})^{G^F}$ with degree shifted by $\ell(w_0)-\ell(w)$. Indeed, the flat pull-back $f'^* : A_*(\tilde{X}(\dot{w}))_{\Z[1/N]} \rightarrow A_*(L^{-1}(\dot{w}U))_{\Z[1/N]}$ is an isomorphism. \cite[Corollary A.18]{hansen} Since $f'^*$ is $G^F$-equivariant, it induces an isomorphism $f'^* : (A_*(\tilde{X}(\dot{w}))_{\Z[1/N]})^{G^F} \rightarrow (A_*(L^{-1}(\dot{w}U))_{\Z[1/N]})^{G^F}$ by functoriality. As the fiber of $f'^*$ is $U \cap \act{\dot{w}}U$ of dimension $\ell(w_0)-\ell(w)$, we get the result.
\end{proof}

\subsection{$[\cl{X(w)}]$ in terms of Schubert classes}

In this section we calculate the class of a Deligne-Lusztig variety in terms of the Schubert classes, i.e. $[\cl{C_w}]$ for $w\in W$. Let $(id, F): \B \rightarrow \B\times \B$ be a closed embedding of $\B$ into $\B \times \B$ which identifies $\B$ with the graph of the Frobenius morphism $\Gamma_F$. Note that $\Gamma_F \cap \O_w$ is isomorphic to $X(w)$ by this morphism.

We have $\cl{X(w)} = \bigsqcup_{v \leq w} X(v)$ where $v \leq w$ is with respect to the (strong) Bruhat order on $W$. Indeed, let $\pi: G \rightarrow G/B$. Then $\pi^{-1}(X(w)) = L^{-1}(\O_w)$. Since $\pi$ and $L$ are fiber bundles, we have $\pi^{-1}(\cl{X(w)}) = L^{-1}(\cl{\O_w})$. Now we use the fact that $\O_w = \bigsqcup_{v \leq w} \O_v$. (This is the definition of the Bruhat order on $W$; it is equivalent to other definitions by e.g. \cite[Proposition 6]{chevalley}.) But this implies that $\Gamma_F \cap \cl{\O_w} = \cl{\Gamma_F \cap \O_w}$.

If the intersection $\Gamma_F \cap \cl{\O_w}$ is generically transversal, then $[\cl{X(w)}]$ is equal to $$(id, F)^*[\cl{\O_w}]$$ where $(id, F)^*: A_*(\B \times \B) \rightarrow A_*(\B)$ denotes the Gysin homomorphism. (ref. \cite[Chapter 6]{fulton}) Indeed, this is the case.

\begin{lemma} The intersection of $\Gamma_F$ with $\cl{\O_w}$ is generically transversal. More precisely, the intersection of $\Gamma_F$ with $\O_w$ is (nonempty and) transversal.
\end{lemma}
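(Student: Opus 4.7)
The plan is to verify transversality pointwise at each $(B', F(B')) \in \Gamma_F \cap \O_w$. Since the preceding paragraph shows $\Gamma_F \cap \cl{\O_w} = \cl{\Gamma_F \cap \O_w}$ and $\O_w$ is open in $\cl{\O_w}$, pointwise transversality on $\Gamma_F \cap \O_w$ will automatically give generic transversality of $\Gamma_F \cap \cl{\O_w}$. Nonemptiness of $\Gamma_F \cap \O_w$ is equivalent to nonemptiness of $X(w)$, which is a standard part of the Deligne--Lusztig theory.

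Fix a point $(B', F(B')) \in \Gamma_F \cap \O_w$. The embedding $(id, F) : \B \to \B \times \B$ has differential $v \mapsto (v, dF(v))$. The key observation is that the geometric Frobenius $F$ has zero differential at every point: locally it is the $q$-th power map on coordinates, and $d(x^q) = q x^{q-1} dx = 0$ in characteristic $p$. Consequently,
\[
T_{(B', F(B'))}\Gamma_F \;=\; T_{B'}\B \oplus 0 \;\subset\; T_{B'}\B \oplus T_{F(B')}\B.
\]

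To finish it suffices to show that $T_{(B', F(B'))}\O_w$ surjects onto the second factor $T_{F(B')}\B$. Here I will use that the diagonal $G$-action makes $\O_w$ a single orbit, so the second projection $\pi_2 : \O_w \to \B$ is a $G$-equivariant surjective morphism between homogeneous $G$-varieties, hence smooth. In particular $d\pi_2$ is surjective everywhere, which combined with the previous paragraph gives $T_{(B', F(B'))}\Gamma_F + T_{(B', F(B'))}\O_w = T_{B'}\B \oplus T_{F(B')}\B$, i.e.\ transversality. The only nontrivial input is the vanishing of $dF$; the remainder is routine homogeneity and tangent-space bookkeeping, so I do not expect a serious obstacle.
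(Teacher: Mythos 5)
Your proof is correct, and it is essentially the same as the paper's: the paper simply cites \cite[Lemma 9.11]{dl}, and the argument there is exactly the one you give, namely that the geometric Frobenius has vanishing differential (being locally the $q$-th power map, with $q$ a power of $\ch\k$), so $T\Gamma_F$ is the first factor of $T_{B'}\B\oplus T_{F(B')}\B$, and then one only needs the second projection $\O_w\to\B$ to be smooth, which holds since it is a $G$-equivariant map of homogeneous $G$-spaces.
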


\begin{proof} The proof is the same as that of \cite[Lemma 9.11]{dl}.
\end{proof}

$A_*(\B \times \B)$ has a basis $\{[\cl{C_{w'}} \times \cl{C_{w''}}]\}_{w', w'' \in W}$. If we express $[\cl{\O_w}]$ with respect to this basis, we obtain the following result.

\begin{prop}\label{orbitdecomp} In $A_*(\B \times \B)$ we have
$$[\cl{\O_w}] = \sum_{\substack{u, v \in W \\ uwv^{-1}=w_0\\\ell(u)+\ell(v)=\ell(w_0)-\ell(w)}}[\cl{C_{w_0u}} \times \cl{C_{w_0v}}].$$
\end{prop}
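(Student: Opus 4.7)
The plan is to compute, for each pair $(a,b) \in W \times W$ with $\ell(a)+\ell(b) = \ell(w_0)+\ell(w)$, the coefficient $\gamma_{a,b}$ of $[\cl{C_a} \times \cl{C_b}]$ in the expansion of $[\cl{\O_w}]$ via the Poincar\'e pairing on $A_*(\B \times \B)$. The key fact is that the Schubert basis $\{[\cl{C_w}]\}$ of $A_*(\B)$ is self-dual under this pairing: $\int_\B [\cl{C_a}] \cdot [\cl{C_b}] = \delta_{b, w_0 a}$ in complementary dimension. This follows from the transverse zero-dimensional intersection $\cl{C_a} \cap \cl{D_{w_0 a}} = \{aB\}$, where $D_u := \{B' : B^- \sim_u B'\}$ is the opposite Schubert cell, combined with the identity $[\cl{D_u}] = [\cl{C_u}]$ in $A_*(\B)$, which holds because the connected group $G$ acts trivially on the Chow group and $w_0 \in G$ maps $C_u$ onto $D_u$. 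By K\"unneth we therefore have
$$\gamma_{a,b} = \int_{\B \times \B}[\cl{\O_w}] \cdot [\cl{C_{w_0 a}} \times \cl{C_{w_0 b}}] = \int_{\B \times \B}[\cl{\O_w}] \cdot [\cl{D_{w_0 a}} \times \cl{C_{w_0 b}}],$$
where in the second equality we have replaced the first factor by its homologous $w_0$-translate.

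Setting $u := w_0 a$ and $v := w_0 b$, it remains to evaluate the intersection number of $\cl{\O_w}$ with $\cl{D_u} \times \cl{C_v}$ in $\B \times \B$. Since $\cl{\O_w}$ and $\cl{D_u} \times \cl{C_v}$ are both $T$-invariant, so is their intersection; being of expected dimension $0$, it must be a disjoint union of $T$-fixed points. A $T$-fixed point of $\B \times \B$ has the form $(xB, yB)$ with $x, y \in W$, and a direct computation of relative positions shows that such a point lies in the open strata $\O_w$, $D_u$, $C_v$ precisely when $x^{-1}y = w$, $x = w_0 u$, and $y = v$ respectively. These three conditions are simultaneously satisfiable if and only if $u^{-1} w_0 v = w$, equivalently $uwv^{-1} = w_0$; in that case the unique $T$-fixed intersection point is $(w_0 uB,\, vB)$, which lies in the smooth locus of each of the three varieties involved.

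The remaining step, and the main obstacle, is to verify that the intersection is transverse at this unique $T$-fixed point and that no boundary stratum contributes. Transversality reduces to a $T$-weight analysis of $T_{(w_0 uB, vB)}(\B \times \B)$: the tangent spaces to $\cl{D_u}$, $\cl{C_v}$ and $\cl{\O_w}$ at the generic $T$-fixed point each decompose as a direct sum of root subspaces indexed by explicit subsets of $\Phi$ determined by $u$, $v$ and $w$, and the conditions $\ell(u)+\ell(v)+\ell(w) = \ell(w_0)$ together with $uwv^{-1} = w_0$ are exactly what guarantee that these subsets partition the full weight list of the ambient tangent space. Consequently $\gamma_{a,b} = 1$ when $uwv^{-1} = w_0$ and the length condition holds, and $\gamma_{a,b} = 0$ otherwise, which is the claimed formula.
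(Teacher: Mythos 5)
Your overall strategy mirrors the paper's: expand $[\cl{\O_w}]$ in the Schubert basis of $A_*(\B\times\B)$ by pairing against a complementary-dimension Schubert class, use the fact that translation by $w_0$ preserves classes to replace one factor by an opposite Schubert variety, reduce to computing a degree-zero intersection number, and finish by transversality. So the skeleton is correct. However, there are two steps where your argument currently has gaps that the paper's proof closes explicitly.

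First, the passage ``being of expected dimension $0$, it must be a disjoint union of $T$-fixed points'' is not justified: expected dimension zero does not by itself imply the set-theoretic intersection is zero-dimensional, and if it were positive-dimensional your enumeration of $T$-fixed points would not compute the intersection product. You need to actually show that $\cl{\O_w}\cap(\cl{D_u}\times\cl{C_v})$ is at most a single reduced point, for instance by unwinding the chain of relative positions $B^-\sim_{u'}B'\sim_{w'}B''\sim_{v'^{-1}}B$ with $u'\leq u$, $w'\leq w$, $v'\leq v$: since $B^-\sim_{w_0}B$ and $\ell(u)+\ell(w)+\ell(v)=\ell(w_0)$, the only possibility is $u'=u$, $w'=w$, $v'=v$, which forces $uwv^{-1}=w_0$ and pins down a unique $(B',B'')$. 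This is exactly what the paper verifies when it identifies the set-theoretic intersection directly.

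Second, the transversality at the unique point is asserted (``a $T$-weight analysis \dots exactly what guarantees that these subsets partition the full weight list'') but not carried out, and this is the step that actually makes the coefficient equal to $1$ rather than some positive integer. The paper supplies a short, concrete tangent-space argument: identify $T_{(B',B'')}(\B\times\B)\cong\g/\b'\times\g/\b''$, observe that the tangent space to $\O_w$ is the image of the diagonal $\{(\pi'(x),\pi''(x)):x\in\g\}$ and the tangent space to $C_u\times w_0C_v$ is $(\b+\b')/\b'\times(\b^-+\b'')/\b''$, and then use $\b+\b^-=\g$ to show these span. Your weight-partition claim is plausible but should be proved, either by this direct argument or by actually listing the relevant root subsets and checking they are disjoint and exhaustive; as written it is a placeholder for the essential computation.
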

\begin{proof} Note that for $w_1, w_2, w_3, w_4 \in W$ such that $\ell(w_1)+\ell(w_2)+\ell(w_3)+\ell(w_4) = 2\ell(w_0)$, we have
\begin{align*}[\cl{C_{w_1}} \times \cl{C_{w_2}}]\cdot[\cl{C_{w_3}} \times \cl{C_{w_4}}]=
\begin{cases}
      1 & \text{if } w_1w_3^{-1} = w_2w_4^{-1} = w_0,\\
      0 & \text{otherwise.}
\end{cases}
\end{align*}
Since $\cl{\O_w}$ is of dimension $\ell(w_0) + \ell(w)$, thus it suffices to show that for $u, v \in W$ such that $\ell(u)+\ell(v)=\ell(w_0)-\ell(w)$ we have
\begin{align*}[\cl{\O_w}]\cdot[\cl{C_{u}} \times \cl{C_{v}}]=
\begin{cases}
      1 & \text{if } uwv^{-1}=w_0,\\
      0 & \text{otherwise.}
\end{cases}
\end{align*}
To that end, we consider the intersection of $\cl{\O_w}$ with $\cl{C_u} \times w_0\cl{C_v}$ for such $u, v \in W$. Indeed, $\cl{\O_w} \cap (\cl{C_u} \times w_0\cl{C_v})$ equals
\begin{align*}
 & \{ (B', B'') \in \B \mid \exists u'\leq u, v'\leq v, w'\leq w \text{ s.t. } B\sim_{u'} B', B' \sim_{w'} B'', B^- \sim_{v'} B'' \}
\\&=\{ (B', B'') \in \B \mid B\sim_{u} B', B' \sim_{w} B'', B^- \sim_{v} B'' \}
\end{align*}
by the length condition and the fact that $B\sim_{w_0} B^-$. This intersection is nonempty if and only if $uwv^{-1} = w_0$ and in such case it consists of a single point.

It remains to show that this intersection is transversal. Note that if the intersection $\cl{\O_w} \cap (\cl{C_u} \times w_0\cl{C_v})$ is nonempty then the point of intersection $(B', B'')$ actually lies in $\O_w \cap (C_u \times w_0C_v)$. Since $\O_w$ and $C_u \times w_0C_v$ are smooth, it suffices to show that the tangent spaces of these varieties span the whole tangent space of $\B \times \B$ at $(B', B'')$. First we identify the tangent space of $\B \times \B$ at $(B', B'')$ with $\g/\b' \times \g/\b''$ where $\g$ is the Lie algebra of $G$ and $\b', \b''$ are the Lie algebras of $B', B''$, respectively. Also we let $\b, \b^-$ be the Lie algebras of $B, B^-$, respectively. Then the tangent space of $\O_w$ at $(B', B'')$ is identified with
$$\{(\pi'(x), \pi''(x)) \in \g/\b' \times \g/\b'' \mid x \in \g \} $$
where $\pi' : \g \rightarrow \g/\b', \pi'' : \g \rightarrow \g/\b''$ are natural projections. Likewise, the tangent space of $C_u\times w_0 C_v$ at this point is identified with $(\b + \b'/\b') \times (\b^-+ \b''/\b'')$. 

Suppose an arbitrary $(x_1, x_2) \in \g \times \g$ is given. Then we may find $(y_1, y_2) \in \b \times \b^-$ such that $x_1-x_2  =y_1-y_2$ since $\b+\b^-=\g$. Let $y \colonequals x_1 - y_1 = x_2-y_2 \in \g$. Then $x_1 = y+y_1$ and $x_2 = y+y_2$, thus $\pi'(x_1) = \pi'(y) + \pi'(y_1)$ and $\pi''(x_2) = \pi''(y)+\pi''(y_2)$. Since $(x_1, x_2)$ is given arbitrary, the result follows.
\end{proof}

\begin{rmk} For $w=id$, it is the same as the class of the diagonal in $\B \times \B$. \cite[Lemma 3.1.1]{brion}
\end{rmk}

As a result, if we can calculate $(id, F)^*[\cl{C_{w'}} \times \cl{C_{w''}}]$ then it is easy to deduce $(id, F)^* [\cl{\O_w}]$ from the proposition above. Indeed it has a simple expression as follows.

\begin{prop} For $u,v \in W$,  $(id, F)^*[\cl{C_{u}} \times \cl{C_{\act{F}v}}] = [\cl{C_u}]\cdot[\cl{C_{v}}] q^{\ell(w_0) -\ell(v)}$ in $A_*(\B)$.
\end{prop}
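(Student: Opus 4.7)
The plan is to use multiplicativity of the Gysin pullback to reduce to computing $F^*[\cl{C_{\act{F}v}}]$, and then pin this down via the projection formula. Since $(id, F): \B \hookrightarrow \B \times \B$ is a regular closed embedding between smooth varieties, $(id, F)^*$ is a ring homomorphism on Chow groups. Writing $p_1, p_2 : \B \times \B \to \B$ for the projections, the Künneth decomposition yields
$$[\cl{C_u} \times \cl{C_{\act{F}v}}] = p_1^*[\cl{C_u}] \cdot p_2^*[\cl{C_{\act{F}v}}],$$
and since $p_1 \circ (id, F) = id_\B$ and $p_2 \circ (id, F) = F$, functoriality of pullback gives
$$(id, F)^*[\cl{C_u} \times \cl{C_{\act{F}v}}] = [\cl{C_u}] \cdot F^*[\cl{C_{\act{F}v}}].$$
So it suffices to establish $F^*[\cl{C_{\act{F}v}}] = q^{\ell(w_0)-\ell(v)}[\cl{C_v}]$ in $A_*(\B)$.

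Because $\B$ is smooth over $\F_q$, Kunz's theorem ensures that $F: \B \to \B$ is finite flat of degree $q^{\ell(w_0)}$, so flat pullback identifies $F^*[\cl{C_{\act{F}v}}]$ with the cycle of the scheme-theoretic preimage $F^{-1}(\cl{C_{\act{F}v}})$. Since $B$ and $T$ are rational, $F$ sends $C_v$ to $C_{\act{F}v}$, and as $F$ is a bijection on closed points, $F^{-1}(\cl{C_{\act{F}v}})$ is supported on $\cl{C_v}$. It follows that $F^*[\cl{C_{\act{F}v}}] = m\,[\cl{C_v}]$ for a single positive integer $m$.

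To determine $m$, apply $F_*$. The projection formula together with $F_*[\B] = q^{\ell(w_0)}[\B]$ gives
$$F_*F^*[\cl{C_{\act{F}v}}] = [\cl{C_{\act{F}v}}] \cdot F_*[\B] = q^{\ell(w_0)}[\cl{C_{\act{F}v}}],$$
while the restriction $F|_{\cl{C_v}} : \cl{C_v} \to \cl{C_{\act{F}v}}$ is purely inseparable of degree $q^{\ell(v)}$ (the geometric Frobenius on any $k$-dimensional variety over $\F_q$ has degree $q^k$), so $F_*[\cl{C_v}] = q^{\ell(v)}[\cl{C_{\act{F}v}}]$. Comparing yields $m \cdot q^{\ell(v)} = q^{\ell(w_0)}$, whence $m = q^{\ell(w_0)-\ell(v)}$. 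The main subtlety here is the degree calculation for $F|_{\cl{C_v}}$, which reduces to the standard computation on a smooth affine chart; the remainder of the argument is purely formal, and substituting back gives the proposition.
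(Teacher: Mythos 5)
Your proof is correct, and it takes a genuinely different route from the paper's own argument. The paper proves this proposition by computing the intersection product $\Gamma_F \cdot (\cl{C_u}\times w_0 \cl{C_{\act{F}v}})$ directly: it identifies the set-theoretic intersection with a Richardson variety $\cl{C_u}\cap w_0\cl{C_v}$, then pins down the intersection multiplicity $q^{\ell(w_0)-\ell(v)}$ by passing to an affine chart, invoking commutativity and associativity of the intersection product, and explicitly exhibiting a composition series of length $q^{\ell(w_0)-\ell(v)}$ for the local ring $A_J/J^qA_J$. You instead exploit the ring-homomorphism property of the Gysin pullback $(id,F)^*$ between smooth varieties together with the K\"unneth decomposition to reduce everything to the single identity $F^*[\cl{C_{\act{F}v}}] = q^{\ell(w_0)-\ell(v)}[\cl{C_v}]$, which you then establish via flatness of the Frobenius on a smooth scheme (Kunz) and a push-pull comparison using $F_*F^*=q^{\ell(w_0)}\cdot$ and $F_*[\cl{C_v}]=q^{\ell(v)}[\cl{C_{\act{F}v}}]$. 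Your approach is cleaner and more conceptual, sidestepping the local multiplicity computation and the $w_0$-translation trick entirely; the trade-off is that it imports two facts the paper does not use --- flatness of Frobenius and the degree-$q^k$ computation for Frobenius restricted to a $k$-dimensional subvariety --- whereas the paper's argument, while more laborious, is self-contained at the level of commutative algebra. Both are valid proofs of the same result.
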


\begin{proof} To that end, we consider the intersection product $\Gamma_F\cdot (\cl{C_u}\times w_0 \cl{C_{\act{F}v}})$, which is equal to $(id, F)^*[ \cl{C_u}\times \cl{C_{\act{F}v}}]$ after push-forward by the closed embedding $\Gamma_F\cap (\cl{C_u}\times w_0 \cl{C_{\act{F}v}}) \hookrightarrow \B$. (For the definition of intersection product, one may refer to \cite{fulton}.) Indeed, the set-theoretic intersection $\Gamma_F\cap (\cl{C_u}\times w_0 \cl{C_{\act{F}v}})$ is
\begin{align*}
&\{(B', \act{F}B') \in \B \times \B \mid \exists u' \leq u, v' \leq v \text{ s.t. } B \sim_{u'} B', B^- \sim_{\act{F}v'} \act{F}B'\}
\\&\simeq \{B' \in \B \mid \exists u' \leq u, v' \leq v \text{ s.t. } B \sim_{u'} B', B^- \sim_{v'} B'\}\simeq\cl{C_u} \cap w_0\cl{C_{v}}
\end{align*}
which is the Richardson variety corresponding to the pair $(u, v)$ in $W$. It is clearly irreducible. Therefore we have $\Gamma_F \cdot (\cl{C_u}\times w_0 \cl{C_{\act{F}v}}) = m[\Gamma_F\cap (\cl{C_u}\times w_0 \cl{C_{\act{F}v}})]$ where $m$ is the multiplicity of $\Gamma_F\cap (\cl{C_u}\times w_0 \cl{C_{\act{F}v}})$ in $\Gamma_F \cdot (\cl{C_u}\times w_0 \cl{C_{\act{F}v}})$. Also proper push-forward by $\Gamma_F\cap (\cl{C_u}\times w_0 \cl{C_{\act{F}v}}) \hookrightarrow \B$ maps it to $m[\cl{C_u}\cap w_0 \cl{C_{v}}] = m[\cl{C_u}]\cdot[\cl{C_{v}}].$ Thus it remains to prove that $m = q^{\ell(w_0)-\ell(v)}.$

To calculate $m$, first we choose an affine open subset $V = \Spec A \subset \B-\bigcup_{u'<u} C_{u'}-\bigcup_{v'<{v}}w_0C_{v'}$ which meets $\cl{C_u} \cap w_0\cl{C_{v}}  \subset \B$, so that $V\times \act{F}V \subset \B \times \B$ meets $\Gamma_F\cap (\cl{C_u}\times w_0 \cl{C_{\act{F}v}})$. Also, we denote by $C'_u,w_0C'_v, w_0C'_{\act{F}v}, \Gamma'_F$ the intersections of $\cl{C_u}, w_0\cl{C_{v}},w_0\cl{C_{\act{F}v}}, \Gamma_F$ with $V,V, \act{F}V, V\times \act{F}V$, respectively. Note that $C'_u, w_0C'_v,w_0C'_{\act{F}v}$ are subsets of $C_u, w_0C_v,w_0C_{\act{F}v}$, respectively, hence smooth. Now it suffices to calculate the intersection multiplicity of $C'_u\times w_0C'_{\act{F}v}$ with $\Gamma'_F$ where the set-theoretic intersection is isomorphic to $C'_u\cap w_0C'_{v}$.

We have the following cartesian diagram.
$$\xymatrix{Z\ar[d]\ar[r]&W\ar[d]\ar[r]&\ar[d]\Gamma'_F\\C'_u \times w_0 C'_{\act{F}v} \ar[r]&V\times w_0C'_{\act{F}v}\ar[r] &V\times \act{F}V}$$
Here $W = (V\times w_0C'_{\act{F}v}) \times_{V\times \act{F}V} \Gamma'_F$ and $Z = (C'_u \times w_0 C'_{\act{F}v})\times_{V\times \act{F}V} \Gamma'_F$. Indeed, we have 
\begin{align*}
Z_{red} &= (C'_u \times w_0 C'_{\act{F}v})\cap \Gamma'_F =\{(B', \act{F}B') \in V\times \act{F}V \mid B\sim_u B', B^-\sim_vB'\} \simeq C'_u\cap w_0C'_{v}, 
\\W_{red} &= \Gamma_F' \cap (V\times w_0C'_{\act{F}v}) =\{(B', \act{F}B') \in V\times \act{F}V \mid B' \in w_0C'_v\}\simeq  w_0 C'_v.
\end{align*}
where $W_{red}, Z_{red}$ denote $W, Z$ with reduced scheme structure, hence varieties. Also they are irreducible.

We use commutativity and associativity of intersection product \cite[Example 7.1.7, 7.1.8]{fulton}. Note that morphisms on the second row are regular embeddings since $C'_u, w_0C'_{\act{F}v}$ are smooth. Thus $m$ equals the product of intersection multiplicities of $Z_{red}$ on $(C'_u \times w_0 C'_{\act{F}v})\cdot W_{red}$ and $W_{red}$ on $(V\times w_0C'_{\act{F}v})\cdot \Gamma'_F$. First we claim that $W_{red}$ and $C'_u \times w_0 C'_{\act{F}v}$ intersects transversally. Indeed, $W_{red}$ and $C'_u \times w_0 C'_{\act{F}v}$ are smooth, thus it suffices to show that for any $(B', \act{F}B') \in W_{red} \cap  (C'_u \times w_0 C'_{\act{F}v}),$ the tangent spaces of the two span the whole tangent space of $V \times w_0C'_{\act{F}v}$ at $(B', \act{F}B')$.

The tangent space of $V\times w_0C'_{\act{F}v}$ at $(B', \act{F}B')$ is identified with $\g/\b' \times (\b^-+\act{F}\b')/\act{F}\b'$ where $\g, \b, \b^-, \b'$ are Lie algebras of $G, B, B^-, B'$, respectively. Similarly, the tangent space of $C'_u\times w_0C'_{\act{F}v}$ at this point is $(\b+\b')/\b' \times (\b^-+\act{F}\b')/\act{F}\b'$. The tangent space of $W_{red}$ at this point is given by
$$\{(\vec{v}, \act{F}\vec{v}) \in \g/\b' \times (\b^-+\act{F}\b')/\act{F}\b' \mid \vec{v} \in \b^-\}.$$
Thus we easily see that the tangent spaces of two subvarieties span the whole tangent space at $(B', \act{F}B')$. Since the choice of $(B', \act{F}B')$ was arbitrary, the transversality follows.

Thus we are done if the intersection multiplicity of $W_{red}$ on $(V\times w_0C'_{\act{F}v}) \cdot \Gamma'_F$ is $q^{\ell(w_0)-\ell(v)}$. To that end, we first identify $V\times \act{F}V \simeq \Spec A\otimes A$, and let $J \subset A$ be the ideal of $w_0C'_v \subset V$ such that $V \times w_0C'_{\act{F}v} \simeq \Spec A\otimes A/1\otimes J$. Also, $\Gamma'_F \simeq \Spec A\otimes A/\langle x^q\otimes 1-1\otimes x \mid x \in A\rangle$, thus isomorphic to $\Spec A$ under the isomorphism $A \xrightarrow{\simeq} A\otimes A/\langle x^q\otimes 1-1\otimes x \mid x \in A\rangle : a \mapsto a\otimes 1$. Now the scheme-theoretic intersection $W$ corresponds to 
$$A/J^q \simeq A\otimes A/(\langle x^q\otimes 1-1\otimes x \mid x \in A\rangle+J)$$
under $a \mapsto a\otimes 1$. Since $\Gamma'_F$ is smooth, thus Cohen-Macaulay, $m$ is equal to the length of $A_J/J^qA_J$. \cite[Proposition 7.1]{fulton}

We choose a regular sequence $r_1, \cdots, r_{\ell(w_0)-\ell(v)} \subset JA_J$ which generate $JA_J$ in $A_J$. (Note that $\Gamma'_F$, $W_{red}$ are of dimension $\ell(w_0), \ell(v)$, respectively.) We claim that $A_J/J^qA_J$ has the following composition series
$$0 \subset \langle r_1^{q-1}\rangle \subset \cdots \subset  \langle r_1\rangle \subset \langle r_1, r_2^{q-1}\rangle\subset \cdots \subset \langle r_1, r_2\rangle \subset \cdots \subset \langle r_1, r_2, \cdots, r_{\ell(w_0)-\ell(v)}\rangle \subset A_J/J^qA_J.$$
The inclusions are strict since $r_1, \cdots, r_{\ell(w_0)-\ell(v)} \subset JA_J$ is a regular sequence in any order. Also, successive quotients are isomorphic to $A_J/JA_J$, thus simple. Since the length of this chain is $q^{\ell(w_0)-\ell(v)}$, we are done.
\end{proof}

From the two propositions above we have
\begin{align*}
(id, F)^*[\cl{\O_w}] &= \sum_{\substack{u, v \in W \\ uwv^{-1}=w_0\\\ell(u)+\ell(v)=\ell(w_0)-\ell(w)}}(id, F)^*[\cl{C_{w_0u}} \times \cl{C_{w_0v}}]\\
&= \sum_{\substack{u, v \in W \\ uwv^{-1}=w_0\\\ell(u)+\ell(v)=\ell(w_0)-\ell(w)}}[\cl{C_{w_0u}}]\cdot[\cl{C_{w_0\act{F^{-1}}v}}]q^{\ell(v)}
\\&= \sum_{\substack{u, v \in W \\ uw\act{F}v^{-1}=w_0\\\ell(u)+\ell(v)=\ell(w_0)-\ell(w)}}[\cl{C_{w_0u}}]\cdot[\cl{C_{w_0v}}]q^{\ell(v)}.
\end{align*}
Thus we conclude the following.
\begin{thm}\label{dl:class} In $A_*(\B)$ we have 
$$[\cl{X(w)}] = \sum_{\substack{u, v \in W \\ uw\act{F}v^{-1}=w_0\\\ell(u)+\ell(v)=\ell(w_0)-\ell(w)}}[\cl{C_{w_0u}}]\cdot[\cl{C_{w_0v}}]q^{\ell(v)}.$$
\end{thm}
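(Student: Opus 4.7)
The plan is to combine the two propositions immediately preceding the theorem with the Gysin pullback formalism. All the serious work has already been done; what remains is a bookkeeping computation, so the proof should be very short.

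First, I would recall the setup: the map $(id,F): \B \hookrightarrow \B\times\B$ identifies $\B$ with the graph $\Gamma_F$ of Frobenius, and under this identification $X(w)$ corresponds to $\Gamma_F \cap \O_w$. The transversality lemma (whose proof is deferred to \cite{dl}) guarantees that the intersection of $\Gamma_F$ with $\cl{\O_w}$ is generically transversal, so passing to Chow groups gives
$$[\cl{X(w)}] = (id,F)^*[\cl{\O_w}] \in A_*(\B).$$
This reduces the theorem to computing $(id,F)^*[\cl{\O_w}]$.

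Next, I would substitute the formula of Proposition \ref{orbitdecomp} for $[\cl{\O_w}]$ and apply the linearity of $(id,F)^*$, yielding
$$[\cl{X(w)}] = \sum_{\substack{u,v \in W \\ uwv^{-1}=w_0 \\ \ell(u)+\ell(v) = \ell(w_0)-\ell(w)}} (id,F)^*\bigl[\cl{C_{w_0 u}} \times \cl{C_{w_0 v}}\bigr].$$
To apply the previous proposition I need each second factor in the form $\cl{C_{\act{F}v'}}$, so for each summand I would set $v' = w_0 \act{F^{-1}}v$. Since $F$ preserves the length function and fixes the longest element $w_0$, one has $\ell(v') = \ell(w_0)-\ell(v)$, and the proposition gives
$$(id,F)^*\bigl[\cl{C_{w_0 u}} \times \cl{C_{w_0 v}}\bigr] = [\cl{C_{w_0 u}}]\cdot[\cl{C_{w_0\act{F^{-1}}v}}] \, q^{\ell(v)}.$$

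Finally, I would re-index by substituting $v \mapsto \act{F}v$ throughout the sum. Under this substitution, the condition $uwv^{-1}=w_0$ becomes $uw\act{F}v^{-1}=w_0$, the length condition is preserved (since $\ell(\act{F}v) = \ell(v)$), the Schubert index $w_0\act{F^{-1}}v$ becomes $w_0 v$, and the power of $q$ becomes $q^{\ell(v)}$, producing the stated formula. The only remotely subtle point is keeping the Frobenius twist consistent during the re-indexing — everything else is formal — so I do not anticipate a genuine obstacle.
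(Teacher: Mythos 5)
Your proof is correct and follows essentially the same route as the paper: apply the transversality lemma to get $[\cl{X(w)}] = (id,F)^*[\cl{\O_w}]$, expand via Proposition \ref{orbitdecomp}, evaluate each term with the Gysin formula for $(id,F)^*$ on products of Schubert classes, and re-index by $v \mapsto \act{F}v$. The bookkeeping with $v' = w_0\act{F^{-1}}v$ and the length computation $\ell(w_0)-\ell(v') = \ell(v)$ matches the paper's calculation exactly.
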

For $w=id$, $[X(id)] =  \sum_{v\in W}[\cl{C_{\act{F}v}}]\cdot[\cl{C_{w_0v}}]q^{\ell(v)}= \sum_{v \in W^F}q^{\ell(v)} = |G^F|/|B^F|$ which is  a special case of Proposition \ref{dl:comp}.

Now we have ingredients to strengthen Proposition \ref{dl:basis} as promised.

\begin{thm}\label{dl:realbasis} Let $M = |G^F|_{p'}$ be the largest factor of $|G^F|$ prime to $\ch \k=p$. Then the determinant (up to sign) of the transition matrix of bases from $\{[\cl{C_w}]\}_{w\in W}$ to $\{[\cl{X(w)}]\}_{w\in W}$ is invertible in $\Z[1/M]$. Therefore, $\{[\cl{X(w)}]\}_{w \in W}$ form a linear basis of $A_*(\B)_{\Z[1/M]}$.
\end{thm}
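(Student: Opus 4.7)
My plan is to combine Theorem~\ref{dl:class} with Proposition~\ref{dl:basis} via a reduction modulo $p$. Let $T$ denote the transition matrix between $\{[\cl{C_w}]\}_{w\in W}$ and $\{[\cl{X(w)}]\}_{w\in W}$. Proposition~\ref{dl:basis} already implies that $\det T$ is a unit in $\Z[1/(q|G^F|)]$, so every prime divisor of $\det T$ must lie in $\{p\}\cup\{\text{primes dividing }M\}$; it therefore suffices to show that $p\nmid\det T$.

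To do so, I would first expand each product $[\cl{C_{w_0u}}]\cdot[\cl{C_{w_0v}}]$ appearing in Theorem~\ref{dl:class} in the Schubert basis $\{[\cl{C_{w'}}]\}_{w'\in W}$, using the integer Schubert structure constants of $\B$. This realises $T$ as an integer matrix, block-diagonal with respect to length. Next, I would reduce modulo $p$: each summand in Theorem~\ref{dl:class} carries a factor $q^{\l(v)}$, which is divisible by $p$ whenever $v\neq\mathrm{id}$, so only the $v=\mathrm{id}$ contribution survives. For $v=\mathrm{id}$ the defining constraints collapse to $u=w_0w^{-1}$, and the corresponding term is
\[
[\cl{C_{w_0\cdot w_0w^{-1}}}]\cdot[\cl{C_{w_0}}]=[\cl{C_{w^{-1}}}],
\]
using $w_0^2=\mathrm{id}$ and that $[\cl{C_{w_0}}]$ is the identity of the Chow ring. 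Hence $[\cl{X(w)}]\equiv[\cl{C_{w^{-1}}}]\pmod{p}$, and $T\bmod p$ is the permutation matrix of the length-preserving involution $w\mapsto w^{-1}$, whose determinant is $\pm1$; in particular $\det T$ is coprime to $p$.

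Combining this with Proposition~\ref{dl:basis}, every prime divisor of $\det T$ must divide $M$, so $\det T$ is a unit in $\Z[1/M]$ and $\{[\cl{X(w)}]\}_{w\in W}$ is a basis of $A_*(\B)_{\Z[1/M]}$. The main step is conceptual rather than computational: one must notice that Theorem~\ref{dl:class} is arranged so that the only summand surviving modulo $p$ corresponds to $v=\mathrm{id}$, making the reduced transition matrix a permutation matrix. Once this observation is in hand, the rest of the argument is routine bookkeeping.
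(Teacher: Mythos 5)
Your proof is correct and follows essentially the same route as the paper: invoke Proposition~\ref{dl:basis} to see $\det T$ is a unit in $\Z[1/(q|G^F|)]$, then reduce Theorem~\ref{dl:class} modulo $p$ so that only the $v=\mathrm{id}$ term survives, giving $[\cl{X(w)}]\equiv[\cl{C_{w^{-1}}}]\pmod p$ and hence a permutation matrix. The paper phrases this as a computation in $A_*(\B)_{\F_p}$ rather than as an explicit integer-matrix reduction, but the argument is the same.
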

\begin{proof} Let $d$ be the determinant (up to sign) of the transition matrix of bases from $\{[\cl{C_w}]\}_{w \in W}$ to $\{[\cl{X(w)}]\}_{w \in W}$. As $\{[\cl{C_w}]\}_{w \in W}$ form a linear basis of $A_*(\B)$, the second statement follows from the first one. Since Proposition \ref{dl:basis} implies that $d$ is a unit in $\Z[1/N]$ where $N= q|G^F|$, it suffices to show that $p \nmid d$. To that end, we consider $A_*(\B)_{\F_p}$. By Theorem \ref{dl:class}, in this Chow group we have
\begin{align*}
[\cl{X(w)}] = \sum_{\substack{u, v \in W \\ uw\act{F}v^{-1}=w_0\\\ell(u)+\ell(v)=\ell(w_0)-\ell(w)}}[\cl{C_{w_0u}}]\cdot[\cl{C_{w_0v}}]q^{\ell(v)}= [\cl{C_{w^{-1}}}]\cdot[\cl{C_{w_0}}] = [\cl{C_{w^{-1}}}].
\end{align*}
Therefore $\{[\cl{X(w)}]\}_{w \in W}$ is the same basis of $A_*(\B)_{\F_p}$ as $\{[\cl{C_w}]\}_{w \in W}$ up to permutation. But it means that $p\nmid d$, hence the result.
\end{proof}

\section{An analogue of Deligne-Lusztig varieties corresponding to a regular semisimple element}
We obtain an analogue of a Deligne-Lusztig variety by replacing the geometric Frobenius morphism by another endomorphism on $G$. In this section we mainly focus on the variety using the conjugation by a regular semisimple element. Suppose $\s \in G$ is a fixed regular semisimple element and consider $\Y_{w,\s}$ following Definition \ref{def:analogue}. We will see that in many cases $\Y_{w,\s}$ can be considered as a limit of $X(w)$ as $q$ approaches 1.

In this section the characteristic of $\k$ is arbitrary. Recall that by \cite[Lemma 1.1]{lu:reflection} and its following argument, $\Y_{w,\s}$ is smooth of pure dimension $\ell(w)$ similar to a Deligne-Lusztig variety $X(w)$.

\subsection{The number of components of $\Y_{w,\s}$} We consider the number of (irreducible) components of $\Y_{w,\s}$. First we need a lemma.

\begin{lemma} $\Y_{w,\s}$ admits a stratification $\Y_{w,\s} = \sqcup_{w' \in W} \Y_{w,\s}^{w'}$ where
$$\Y_{w,\s}^{w'} = \{ B' \in \B \mid B \sim_{w'} B'\}.$$
Also, $\Y_{w,\s}^{w'}$ is isomorphic to $U^- \cap \act{w'^{-1}}U \cap BwB$.
\end{lemma}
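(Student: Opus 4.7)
The stratification is immediate: since $\B = \bigsqcup_{w' \in W} C_{w'}$ is the Bruhat decomposition, setting $\Y_{w,\s}^{w'} \colonequals \Y_{w,\s} \cap C_{w'}$ realizes the claimed partition. (The set-builder in the statement presumably should read $\{B' \in \Y_{w,\s} \mid B \sim_{w'} B'\}$.) The content of the lemma is the identification of each piece, and for this I would first reduce to the case $\s \in T$: any regular semisimple element can be conjugated into $T$, and since the target $U^- \cap \act{w'^{-1}}U \cap BwB$ manifestly does not involve $\s$, it suffices to verify the isomorphism for one representative in the conjugacy class of $\s$.

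Fix $\dot{w}' \in N(T)$ and use the standard parametrization $U \cap \act{w'}U^- \xrightarrow{\sim} C_{w'}$, $u \mapsto \act{u\dot{w}'}B$. Under this identification, $B' = \act{u\dot{w}'}B$ belongs to $\Y_{w,\s}^{w'}$ exactly when $\dot{w}'^{-1} u^{-1} \s u \dot{w}' \in BwB$. Because $\s \in T$ normalizes both $U$ and $\act{w'}U^-$, we can factor
\[
\dot{w}'^{-1} u^{-1} \s u \dot{w}' \;=\; \dot{w}'^{-1}(u^{-1}\,\act{\s}u)\dot{w}' \cdot t, \qquad t \colonequals \dot{w}'^{-1}\s\dot{w}' \in T,
\]
where $u^{-1}\,\act{\s}u \in U \cap \act{w'}U^-$ and therefore $\dot{w}'^{-1}(u^{-1}\,\act{\s}u)\dot{w}' \in U^- \cap \act{w'^{-1}}U$. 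Since $t \in T \subset B$, the membership condition above is equivalent to the first factor lying in $BwB$.

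It therefore suffices to show that the morphism
\[
\psi \colon U \cap \act{w'}U^- \longrightarrow U \cap \act{w'}U^-, \qquad u \longmapsto u^{-1}\,\act{\s}u
\]
is an isomorphism of varieties; composing with conjugation by $\dot{w}'^{-1}$ will then identify $\Y_{w,\s}^{w'}$ with $U^- \cap \act{w'^{-1}}U \cap BwB$, as required. The differential of $\psi$ at the identity is $\operatorname{Ad}(\s) - 1$, which acts on each $T$-root subspace $\g_\alpha \subset \textup{Lie}(U \cap \act{w'}U^-)$ by the scalar $\alpha(\s) - 1$; since $\s$ is regular semisimple, each such scalar is nonzero, so the differential is a linear isomorphism. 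To promote this to a global isomorphism of varieties (the main technical point, especially in positive characteristic), I would filter $U \cap \act{w'}U^-$ by a $T$-stable normal chain with one-dimensional vector group quotients — for instance by ordering the relevant positive roots in non-decreasing height and taking the descending products of root subgroups. On each graded piece $\psi$ reduces to multiplication by $\alpha(\s) - 1 \neq 0$, and an inverse to $\psi$ can then be constructed inductively up the filtration. Once $\psi$ is known to be an isomorphism, the chain of identifications above delivers $\Y_{w,\s}^{w'} \simeq U^- \cap \act{w'^{-1}}U \cap BwB$.
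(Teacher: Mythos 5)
Your argument reconstructs the proof that the paper cites from Lusztig \cite{lu:reflection}: parametrize $C_{w'}$ by $U \cap \act{w'}U^-$ via $u \mapsto \act{u\dot{w}'}B$, rewrite the membership condition $B' \sim_w \act{\s}B'$ in terms of the twisted Lang map $u \mapsto u^{-1}\act{\s}u$, and prove that this map is an automorphism of the variety $U \cap \act{w'}U^-$ using a $T$-stable normal filtration with $\mathbb{G}_a$-quotients, on each of which it acts by the nonzero scalar $\alpha(\s)-1$. This part is correct, as is the typo you flag in the statement of the stratum.

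The reduction to $\s \in T$, however, has a genuine gap. The stratum $\Y_{w,\s}^{w'}$ is \emph{not} invariant under replacing $\s$ by a $G$-conjugate: conjugation by $h$ carries $\Y_{w,\s}$ isomorphically to $\Y_{w,\act{h}\s}$, but it also moves the Bruhat cell $C_{w'}$ unless $h \in B$. In fact the claimed isomorphism is false for $\s \notin B$. Take $G = SL_2$, $w = w' = 1$, and $\s$ the anti-diagonal regular semisimple element $\begin{psmallmatrix}0&1\\-1&0\end{psmallmatrix}$; then $\Y_{1,\s}$ consists of the two Borel subgroups containing the anti-diagonal torus, neither of which is $B$, so $\Y_{1,\s}^{1} = \emptyset$, whereas $U^- \cap U \cap B$ is a single point. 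Thus ``the target does not involve $\s$'' does not license verifying on a single representative of the conjugacy class. The valid reduction is: for $\s \in B$ one may conjugate $\s$ into $T$ by an element of $B$, which \emph{does} preserve every $C_{w'}$ and hence every stratum. The Lemma as printed inherits this imprecision (it is only invoked later for $\s \in T^{rs}$, cf.\ the proof of Lemma~\ref{ss:indep}), but as a self-contained argument yours needs the hypothesis $\s \in T$ (or at least $\s \in B$), not merely $\s$ regular semisimple.
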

\begin{proof} It follows from the proof of \cite[Proposition 1.2]{lu:reflection}.
\end{proof}

Since $\Y_{w,\s}$ is of pure dimension $\ell(w)$, the number of (irreducible) components of $\Y_{w,\s}$ is the same as the sum of the number of $\ell(w)$-dimensional irreducible components of $\Y_{w,\s}^{w'}$ over $w' \in W$. By the lemma above, it is the same as that of $U^- \cap \act{w'^{-1}}U \cap BwB$ over $w' \in W$.

To obtain this number, we first assume that $\ch \k = p \neq 0$. (Later we will see that $\ch \k =0$ case follows directly from it.) We may also assume thus $G$ is split over $\F_q$ where $q$ is some power of $p$ and $B$ is rational. Thus it is possible to count the number of $F$-fixed points of $U^- \cap \act{w'^{-1}}U \cap BwB$. Define
$$f_{w, w'}(q) \colonequals |(U^- \cap \act{w'^{-1}}U \cap BwB)^F|$$
and $f_{w}(q) \colonequals \sum_{w' \in W} f_{w, w'}(q)$. We will see in a moment that there is a polynomial $\f_w(x)$ of degree $\ell(w)$ independent of $q$ such that $f_w(q)=\f_w(q)$. Then the sum of $\ell(w)$-dimensional irreducible components is the same as the leading coefficient of $\f_w(x)$.

\begin{defn} \label{def:hecke}The Iwahori-Hecke algebra, denoted $\H_x(W)$, is the $\C[x, x^{-1}]$-algebra with a linear basis $\{T_w\}_{w \in W}$ with the following relations.
\begin{itemize}
\item For $w, w' \in W$ with $\ell(w) + \ell(w') = \ell(w w')$, $T_w T_{w'} = T_{ww''}$.
\item For $s \in S$, $(T_s -x)(T_s +1) =0$.
\end{itemize}
For $q \in \C^*$, define $\H_q(W)$ to be the $\C$-algebra with the same basis and relations with $x$  replaced by $q$.
\end{defn}

\begin{lemma} $f_{w, w'}(q)$ is the coefficient of $T_{w'^{-1}}$ in the expression of $T_w T_{w'^{-1}}$ with respect to the basis $\{T_v\}_{v \in W}$ in $\H_q(W)$.
\end{lemma}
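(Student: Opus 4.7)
The plan is to realize $\H_q(W)$ as the usual Iwahori--Hecke convolution algebra attached to the pair $(G^F, B^F)$, read off the coefficient of $T_{w'^{-1}}$ in $T_w T_{w'^{-1}}$ as a count of Bruhat cell intersections, and then match this count to $f_{w,w'}(q)$ via an explicit $U^-$-parametrization.

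First, I would invoke the classical isomorphism $\H_q(W) \cong \textup{End}_{G^F}(\C[G^F/B^F])^{\textup{op}}$, under which $T_w$ acts on cosets by $[gB^F] \mapsto \sum_{hB^F \in gB^FwB^F/B^F}[hB^F]$. A direct check verifies the defining relations of Definition \ref{def:hecke}, together with $T_w T_{w'}=T_{ww'}$ whenever $\l(ww')=\l(w)+\l(w')$. Evaluating $T_w T_{w'}=\sum_v c_v T_v$ at $[eB^F]$ and counting terms gives the structure-constant formula
$$c_v \;=\; \bigl|\, B^F w B^F/B^F \,\cap\, \dot v\cdot B^F (w')^{-1} B^F/B^F \,\bigr|$$
for any representative $\dot v\in N(T)^F$ of $v$. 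Specializing with $w' \mapsto w'^{-1}$ and $v = w'^{-1}$ then yields
$$c_{w'^{-1}} \;=\; \bigl|\,BwB/B \;\cap\; \dot{w}'^{-1}\, Bw'B/B\,\bigr|^{F}.$$

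To identify the right-hand side with $f_{w,w'}(q)$ I would use the standard BN-pair parametrization $Bw'B/B = \{u\dot w' B : u \in U \cap \act{w'}U^-\}$. Left-translating by $\dot w'^{-1}$ gives
$$\dot w'^{-1}(u\dot w' B) \;=\; (\act{w'^{-1}} u)\,B,$$
and $\act{w'^{-1}}u$ ranges over $\act{w'^{-1}}(U \cap \act{w'}U^-) = U^- \cap \act{w'^{-1}}U$ as $u$ varies. Since $U^- \cap B=\{e\}$, the map $u''\mapsto u'' B$ embeds $U^-\cap\act{w'^{-1}}U$ injectively into $\B$ with image $\dot w'^{-1}Bw'B/B$. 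Under this identification the intersection with $BwB/B$ corresponds precisely to $U^-\cap \act{w'^{-1}}U \cap BwB$, so taking $F$-fixed points gives $c_{w'^{-1}}=f_{w,w'}(q)$.

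The main delicate point is keeping the Hecke-algebra convention straight in the first two steps: the alternative identification (via $\textup{End}_{G^F}(\C[G^F/B^F])$ rather than its opposite) would interchange $w$ and $w^{-1}$ in the counting formula and produce an intersection with $Bw^{-1}B$ rather than the target $BwB$. Once this convention is locked in, the rest is a routine BN-pair unwinding.
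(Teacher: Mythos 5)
Your proof is correct. Where the paper disposes of this lemma with a bare citation to Kawanaka's Theorem~2.6(b), you reconstruct the proof from first principles, and the strategy you follow is almost certainly the one behind Kawanaka's statement: realize $\H_q(W)$ as the Iwahori--Hecke convolution algebra on $B^F\backslash G^F/B^F$, read the structure constant $c_{w'^{-1}}$ in $T_wT_{w'^{-1}}$ as a Bruhat cell intersection count, and then unwind the $BN$-pair parametrization $Bw'B/B = (U\cap\act{w'}U^-)\dot w' B/B$ to convert that count into $|(U^-\cap\act{w'^{-1}}U\cap BwB)^F|$. The chain of identifications is sound: the opposite-algebra convention is the right one for the relations $T_wT_{w'}=T_{ww'}$ under additive lengths, the structure-constant formula $c_v = |BwB/B \cap \dot v B w'^{-1}B/B|^F$ follows by evaluating at $[eB^F]$ exactly as you say, injectivity of $u''\mapsto u''B$ on $U^-\cap\act{w'^{-1}}U$ comes from $U^-\cap B=\{e\}$, and since $B$, $T$, and a representative $\dot w'\in N(T)^F$ can all be taken rational (with $G$ split, as the paper assumes here), the parametrization is $F$-equivariant and fixed points match. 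You were right to flag the convention issue explicitly; with the non-opposite identification one would land on $Bw^{-1}B$ instead of $BwB$, giving $f_{w^{-1},w'}$ rather than $f_{w,w'}$. The trade-off is clear: the paper's citation is shorter, while your argument is self-contained and makes visible exactly how the Hecke structure constant encodes the geometry of $U^-\cap\act{w'^{-1}}U\cap BwB$, which is useful since the paper immediately afterward compares this count with $|(U^F\backslash X^{w'}(w))^F|$ via a parallel Hecke-algebra identity.
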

\begin{proof} \cite[Theorem 2.6(b)]{kawanaka}.
\end{proof}
Note that in $\H_x(W)$ this gives a polynomial  of $x$, and $f_{w, w'}(q)$ is obtained by evaluating this polynomial at $q$.

As we assume $G$ is split, the corresponding geometric Frobenius $F$ acts trivially on $W$. We have a stratification of a Deligne-Lusztig variety $X(w) = \sqcup_{w' \in W} X^{w'}(w)$ where
$$X^{w'}(w) \colonequals \{ B' \in X(w) \mid B \sim_{w'} B'\}.$$
Note that $U^F \subset B^F$ acts on the left of each $X^{w'}(w)$.

\begin{lemma} $|(U^F \backslash X^{w'}(w))^F|$ is the coefficient of $T_{w'}$ in the expression of $T_w T_{w'}$ with respect to the basis $\{T_v\}_{v \in W}$ in $\H_q(W)$.
\end{lemma}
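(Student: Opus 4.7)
The plan is to reduce $|(U^F \backslash X^{w'}(w))^F|$ to an intersection count in $U$ via a Lang-type cover, and then match it with the Hecke coefficient by invoking the previous lemma. Fix a representative $\dot{w'} \in N(T)$. Every point of $Bw'B/B$ is uniquely of the form $u\dot{w'}B$ with $u \in U_0 := U \cap \dot{w'}U^-\dot{w'}^{-1}$, and the condition $B' \sim_w F(B')$ translates to $L(u) \in \dot{w'}BwB\dot{w'}^{-1}$, where $L(u) := u^{-1}F(u)$ is the Lang map. Rather than work with $U_0$ directly, I would introduce the larger auxiliary variety
\[
\tilde{X}^{w'}(w) := \{u \in U : L(u) \in \dot{w'}BwB\dot{w'}^{-1}\} \subset U,
\]
and show that the map $u \mapsto u\dot{w'}B$ realizes $\tilde{X}^{w'}(w) \to X^{w'}(w)$ as a torsor under the right action of $U_1 := U \cap \dot{w'}U\dot{w'}^{-1}$. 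The crucial identity is $L(uu_1) = u_1^{-1}L(u)F(u_1)$; combined with $u_1, F(u_1) \in \dot{w'}U\dot{w'}^{-1}$, whose $\dot{w'}^{-1}$-conjugates lie in $U \subset B$ and are absorbed by the outer Borels, this shows that the defining condition is $U_1$-invariant.

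Next I would exploit the commuting left $U^F$-action. By Lang's theorem for the connected unipotent group $U$, the map $L: U \to U$ is surjective with $U^F$-orbits as fibers, and is $F$-equivariant (since $L \circ F = F \circ L$). Hence $L$ induces an $F$-equivariant bijection $U^F \backslash U \xrightarrow{\sim} U$, which restricts to an $F$-equivariant bijection $U^F \backslash \tilde{X}^{w'}(w) \xrightarrow{\sim} U \cap \dot{w'}BwB\dot{w'}^{-1}$; taking $F$-fixed points gives $|(U^F \backslash \tilde{X}^{w'}(w))^F| = |U^F \cap \dot{w'}BwB\dot{w'}^{-1}|$. Since the $U_1$-action on $\tilde{X}^{w'}(w)$ commutes with the $U^F$-action, the torsor descends to a $U_1$-torsor $U^F \backslash \tilde{X}^{w'}(w) \to U^F \backslash X^{w'}(w)$; Lang's theorem applied to the connected group $U_1$ then yields
\[
|(U^F \backslash \tilde{X}^{w'}(w))^F| = |U_1^F| \cdot |(U^F \backslash X^{w'}(w))^F|.
\]
Combining, $|(U^F \backslash X^{w'}(w))^F| = |U^F \cap \dot{w'}BwB\dot{w'}^{-1}|/|U_1^F|$.

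Finally I would match this with the Hecke coefficient via the previous lemma. Conjugating by $\dot{w'}^{-1}$ and decomposing $\dot{w'}^{-1}U\dot{w'} = (U \cap \dot{w'}^{-1}U\dot{w'}) \cdot (U^- \cap \dot{w'}^{-1}U\dot{w'})$, with the observation that the first factor lies in $U \subset B$ and is absorbed in $BwB$, the count $|U^F \cap \dot{w'}BwB\dot{w'}^{-1}|$ factors as $|(U \cap \dot{w'}^{-1}U\dot{w'})^F| \cdot |(U^- \cap \dot{w'}^{-1}U\dot{w'} \cap BwB)^F|$. The first factor equals $q^{\ell(w_0) - \ell(w')}$, which matches $|U_1^F|$ by the root-count symmetry between $w'$ and $w'^{-1}$, so cancellation gives $|(U^F \backslash X^{w'}(w))^F| = |(U^- \cap \dot{w'}^{-1}U\dot{w'} \cap BwB)^F|$. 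By the previous lemma (applied with the appropriate reindexing), this is precisely the claimed coefficient in $\H_q(W)$. The main technical obstacle I anticipate is the Lang-cohomology step for the $U_1$-torsor after taking $U^F$-quotient and $F$-fixed points, which hinges on connectedness of $U_1$ to ensure vanishing of $H^1(F, U_1)$; everything else is combinatorial bookkeeping with Bruhat cells and the decomposition $U = U_0 \cdot U_1$.
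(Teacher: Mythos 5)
Your geometric reduction is sound and in fact self-contained where the paper merely cites Digne--Michel: the auxiliary cover $\tilde{X}^{w'}(w)=\{u\in U: L(u)\in \dot{w'}BwB\dot{w'}^{-1}\}$, the right $U_1$-torsor structure over $X^{w'}(w)$ commuting with the left $U^F$-action, Lang's theorem for the connected groups $U$ and $U_1$, and the root count $|U_1^F|=q^{\ell(w_0)-\ell(w')}=|(U\cap \act{w'^{-1}}U)^F|$ are all correct. You correctly arrive at
$$|(U^F\backslash X^{w'}(w))^F| = |(U^-\cap \act{w'^{-1}}U\cap BwB)^F|,$$
which is \emph{literally} the quantity $f_{w,w'}(q)$ of the paper's preceding lemma, with no reindexing whatsoever. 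That lemma identifies it with the coefficient of $T_{w'^{-1}}$ in $T_wT_{w'^{-1}}$, and this is \emph{not} the coefficient of $T_{w'}$ in $T_wT_{w'}$ claimed in the statement you are proving.

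The gap is your final sentence: the ``appropriate reindexing'' you invoke does not exist. To land on $(T_wT_{w'}:T_{w'})$ you would need $\act{w'}U$ in place of $\act{w'^{-1}}U$, but your reduction produces the latter, and the two give genuinely different numbers. Take $W=S_3$, $w=s_1$, $w'=s_1s_2$: then $T_{s_1}T_{s_1s_2}=(q-1)T_{s_1s_2}+qT_{s_2}$ so the stated coefficient is $q-1$, while $T_{s_1}T_{s_2s_1}=T_{s_1s_2s_1}$ so the coefficient of $T_{w'^{-1}}=T_{s_2s_1}$ is $0$. A direct matrix calculation in $GL_3$ shows $\dot{w'}^{-1}L(u)\dot{w'}$ is lower unitriangular with vanishing $(2,1)$-entry for every $u$ in the relevant cell, so $X^{s_1s_2}(s_1)=\emptyset$ and the count is $0$ --- i.e.\ \emph{your} formula gives the correct value, not the one you are trying to prove. (The discrepancy washes out in the sum over $w'$, since both families of coefficients sum to the trace of left multiplication by $T_w$, so the paper's subsequent Proposition is unaffected; but you should flag the mismatch between the statement and the preceding lemma rather than asserting a reindexing that turns one into the other.)
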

\begin{proof} \cite[Proposition 8.2]{dm:end}.
\end{proof}

As we sum up over $w' \in W$ we get the following.
\begin{prop} $f_w(q) = |(U^F\backslash X(w))^F|$.
\end{prop}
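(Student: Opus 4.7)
The plan is to compare both sides stratum by stratum and see that each stratum matches after re-indexing.

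First, I would use the stratification $X(w) = \sqcup_{w' \in W} X^{w'}(w)$ already introduced just above the statement. Since $U^F \subset B^F$ acts on $X(w)$ preserving each stratum (the stratification is by relative position to the fixed Borel $B$, which $U^F$ stabilizes), taking the quotient and then $F$-fixed points gives
\[
|(U^F\backslash X(w))^F| = \sum_{w' \in W} |(U^F\backslash X^{w'}(w))^F|.
\]
By the lemma attributed to \cite[Proposition 8.2]{dm:end}, each summand equals the coefficient of $T_{w'}$ in $T_w T_{w'}$ when the product is expanded in the basis $\{T_v\}_{v \in W}$ of $\H_q(W)$.

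On the other hand, by definition $f_w(q) = \sum_{w' \in W} f_{w, w'}(q)$, and by the lemma attributed to \cite[Theorem 2.6(b)]{kawanaka}, $f_{w, w'}(q)$ equals the coefficient of $T_{w'^{-1}}$ in the expansion of $T_w T_{w'^{-1}}$. I would then reindex the sum by substituting $v = w'^{-1}$, which is a bijection on $W$, to obtain
\[
f_w(q) = \sum_{v \in W} [\text{coefficient of } T_v \text{ in } T_w T_v],
\]
which is exactly the same expression as above. This proves the equality.

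There is essentially no obstacle: the argument is a bookkeeping comparison between the two preceding lemmas, together with the trivial observation that $w' \mapsto w'^{-1}$ is a bijection of $W$. The only point that deserves a sentence of justification is why $U^F$ preserves the stratum $X^{w'}(w)$, which follows from the fact that $U^F \subset B^F$ and the strata are cut out by the relative position to $B$.
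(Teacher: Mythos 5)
Your argument matches the paper's proof exactly: sum the two preceding lemmas over $w' \in W$ and reindex one sum by $w' \mapsto w'^{-1}$. The paper compresses this to the single sentence ``As we sum up over $w' \in W$ we get the following,'' and the small additional justification you supply (that $U^F \subset B^F$ preserves each stratum $X^{w'}(w)$) is correct and implicit in the paper.
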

The following theorem is a natural consequence.
\begin{thm}\label{ss:comp} The number of components of $\Y_{w,\s}$ is the same as $|U^F \backslash G^F/P_I^F| = |W/W_I|$ where $I = \supp(w)$. In particular, $\Y_{w,\s}$ is irreducible if and only if $X(w)$ is irreducible.
\end{thm}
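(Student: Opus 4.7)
The plan is to count components of $\Y_{w,\s}$ by extracting the coefficient of $x^{\ell(w)}$ in the polynomial $\f_w(x)=\sum_{w'}\f_{w,w'}(x)$ produced by the preceding Hecke-algebra lemmas, and then to compute this coefficient in two independent ways. First, since $\Y_{w,\s}$ is pure of dimension $\ell(w)$, its irreducible components are precisely the closures of the $\ell(w)$-dimensional irreducible components of the strata $\Y_{w,\s}^{w'}\simeq U^-\cap\act{w'^{-1}}U\cap BwB$. The scheme $U^-\cap\act{w'^{-1}}U\cap BwB$ depends only on the combinatorial data $(W,S,w,w')$, so its component count is characteristic-independent, and I may assume $\ch\k=p>0$, $G$ split over $\F_q$, and $\s\in T^F$.

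In this setting $\f_{w,w'}(q)=|(U^-\cap\act{w'^{-1}}U\cap BwB)^F|$ for every prime power $q$. A Lang-Weil estimate identifies the coefficient of $x^{\ell(w)}$ in the fixed polynomial $\f_{w,w'}$ with the number of $F$-stable $\ell(w)$-dimensional absolutely irreducible components of the stratum, which in the split case equals the full number of $\ell(w)$-dimensional components. Summing over $w'$, the coefficient of $x^{\ell(w)}$ in $\f_w(x)$ equals the total number of irreducible components of $\Y_{w,\s}$.

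On the other hand, by the preceding proposition $\f_w(q)=|(U^F\backslash X(w))^F|$ for all $q$. Applying Proposition \ref{dl:comp}, $X(w)$ is pure of dimension $\ell(w)$ with $|G^F/P_I^F|$ irreducible components, all $F$-stable and transitively permuted by $G^F$; hence $U^F\backslash X(w)$ is pure of dimension $\ell(w)$ with $|U^F\backslash G^F/P_I^F|$ irreducible components, again all $F$-stable. A second Lang-Weil estimate then identifies the coefficient of $x^{\ell(w)}$ in $\f_w$ with $|U^F\backslash G^F/P_I^F|$. A standard Bruhat / Lang-Steinberg argument---each Schubert cell $BwP_I/P_I$ is a single $U$-orbit, so each $(BwP_I/P_I)^F$ is a single $U^F$-orbit by Lang-Steinberg applied to the connected stabilizer $U\cap wP_Iw^{-1}$---shows $|U^F\backslash G^F/P_I^F|=|W/W_I|$, completing the main claim. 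The irreducibility corollary is then immediate: $\Y_{w,\s}$ is irreducible iff $|W/W_I|=1$ iff $\supp(w)=S$, which in the split case is exactly the condition $\bigcup_n\supp(\act{F^n}w)=S$ of Lemma \ref{dl:irred} characterizing irreducibility of $X(w)$.

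The main delicate point is the application of Lang-Weil: one must verify that all top-dimensional components of both $U^-\cap\act{w'^{-1}}U\cap BwB$ and $U^F\backslash X(w)$ are pointwise $F$-stable, not merely permuted by $F$, so that the leading coefficient of the counting polynomial genuinely coincides with the geometric component count. In the split case this holds because the combinatorial parameters (Weyl elements, root subgroups, and the double coset set $G^F/P_I^F$) indexing these components are themselves pointwise fixed by $F$.
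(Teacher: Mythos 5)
Your proposal takes essentially the same route as the paper: stratify $\Y_{w,\s}$ into the layers $\Y_{w,\s}^{w'}\simeq U^-\cap\act{w'^{-1}}U\cap BwB$, convert the top-dimensional component count into the leading coefficient of the Iwahori--Hecke polynomial $\f_w$, reinterpret $\f_w(q)$ as $|(U^F\backslash X(w))^F|$, and identify this leading coefficient with $|U^F\backslash G^F/P_I^F|=|W/W_I|$. The paper condenses the last steps into one line (``the number of components for $U^F\backslash X(w)$ and $\Y_{w,\s}$ coincides''), and your proposal supplies exactly the Lang--Weil reasoning that the paper leaves implicit, together with the Lang--Steinberg argument for $|U^F\backslash G^F/P_I^F|=|W/W_I|$ that the paper records in the remark after the theorem.

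One terminological correction: you write that the top-dimensional components of the strata and of $U^F\backslash X(w)$ must be ``pointwise $F$-stable.'' What is needed for Lang--Weil is that each such component is $F$-stable \emph{as a subvariety}, i.e.\ $F(C)=C$ (equivalently, defined over $\F_q$); no positive-dimensional component can be pointwise fixed by $F$. With that correction, your justification in the split case is sound, and one can also note that since the leading coefficient of the fixed polynomial $\f_{w,w'}$ equals the number of $F^n$-stable top-dimensional components for every $n$, all these components are automatically $F$-stable — no appeal to an explicit indexing is strictly needed.
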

\begin{proof} We see that the number of components for $U^F \backslash X(w)$ and $\Y_{w,\s}$ coincides. Now the result follows from Lemma \ref{dl:irred} and \ref{dl:ind}.
\end{proof}
Note that $|G^F/P_I^F| = \sum_{wW_I \in W/W_I} q^{\ell(w)}$ where each $w$ is the unique right $I$-reduced element on each coset. Thus $|U^F \backslash G^F/P_I^F|$ is the same as the expression above where $q$ is replaced by 1. In other words, the number of components of $\Y_{w,\s}$ is the same as that of $X(w)$ for "$q=1$".

\begin{rmk} If $\ch \k =0$, then we may reduce $U^- \cap \act{w'^{-1}}U \cap BwB$ to $\cl{\F_p}$ for some $p$ with the number of its irreducible components unchanged. Then the result directly follows from above.
\end{rmk}

\subsection{Linear dependence of $[\cl{\Y_{w,\s}}]$ in the Chow group of a flag variety} Unlike $\{[\cl{X(w)}]\}$, unfortunately, in general $\{[\cl{\Y_{w,\s}}]\}$ does not form a basis in the Chow group of a flag variety. We give some examples when $[\cl{\Y_{w,g}}]$ coincides for different $w \in W$ and $g\in G$ not necessarily regular semisimple.

\begin{lemma}\label{inverse} $\Y_{w,g}=\Y_{w^{-1},g^{-1}}$. In particular, $[\cl{\Y_{w,g}}]=[\cl{\Y_{w^{-1},g^{-1}}}]$ in $A_*(\B)$.
\end{lemma}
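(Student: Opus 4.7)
The plan is to deduce the equality $\Y_{w,g}=\Y_{w^{-1},g^{-1}}$ as (reduced) subvarieties of $\B$ directly from two elementary properties of the Bruhat decomposition; the equality of Chow classes then is immediate.

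First I would record the two facts I need. (i) For any $B_1,B_2\in\B$, $B_1\sim_w B_2$ if and only if $B_2\sim_{w^{-1}} B_1$; this is just the observation that the flip map $(B_1,B_2)\mapsto(B_2,B_1)$ on $\B\times\B$ carries the orbit $\O_w$ onto the orbit $\O_{w^{-1}}$ (the $G$-orbit containing $(B,\dot w^{-1}B\dot w) = (B,{}^{\dot w^{-1}}\!B)$ is sent to the orbit containing $({}^{\dot w^{-1}}\!B,B)$, which under left translation by $\dot w$ becomes $(B,{}^{\dot w}\!B)$). (ii) Relative position is preserved by diagonal $G$-conjugation: $B_1\sim_w B_2$ if and only if ${}^{h}B_1\sim_w {}^{h}B_2$, for any $h\in G$, because $\O_w$ is by definition a diagonal $G$-orbit.

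With these in hand, the proof is a one-line chain of equivalences. Take $B'\in\B$. Then
\[
B'\in\Y_{w,g}\iff B'\sim_w {}^{g}B'\iff {}^{g}B'\sim_{w^{-1}} B'\iff B'\sim_{w^{-1}}{}^{g^{-1}}B'\iff B'\in\Y_{w^{-1},g^{-1}},
\]
where the first equivalence is the definition, the second uses (i), the third applies (ii) with $h=g^{-1}$, and the fourth is again the definition. Thus $\Y_{w,g}$ and $\Y_{w^{-1},g^{-1}}$ have the same underlying set of $\k$-points; since both are defined by the same incidence condition inside $\B$ they coincide as locally closed subvarieties, and hence their closures agree in $\B$.

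There is essentially no obstacle here: the only thing to be careful about is to check that the symmetry $(B_1,B_2)\leftrightarrow(B_2,B_1)$ really swaps $\O_w$ with $\O_{w^{-1}}$, which is standard (it amounts to $(B,{}^{\dot w}B)$ and $({}^{\dot w^{-1}}B,B)$ lying in the same $G$-orbit via left multiplication by $\dot w$). The statement about Chow classes then follows trivially from $\cl{\Y_{w,g}}=\cl{\Y_{w^{-1},g^{-1}}}$ as closed subvarieties of $\B$.
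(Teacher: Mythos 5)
Your proof is correct and is essentially the paper's own argument: the paper compresses the same chain of equivalences into a single line, and you have simply made explicit the two underlying facts about relative position (flip swaps $\O_w$ with $\O_{w^{-1}}$; relative position is invariant under diagonal conjugation) that justify each step.
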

\begin{proof}Indeed, we have
\begin{align*}
\Y_{w,g} &= \{ B' \in \B \mid B' \sim_w \act{g}B'\} = \{ B' \in \B \mid B' \sim_{w^{-1}} \act{g^{-1}}B'\} = \Y_{w^{-1},g^{-1}}.
\end{align*}
\end{proof}
For $\s$ regular semisimple, $[\cl{\Y_{w,\s}}]$ does not depend on $\s$ by the following lemma.
\begin{lemma}\label{ss:indep} $[\cl{\Y_{w,\s}}]$ is independent of the choice of a regular semisimple element $s \in G$.
\end{lemma}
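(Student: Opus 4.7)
The plan is to realize the varieties $\Y_{w,\s}$ as fibers of a flat family parametrized by the open subvariety $G^{rs} \subset G$ of regular semisimple elements, and then invoke constancy of fiber classes for such a family.

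First, consider the closed subvariety
$$\cl{\mathcal{Z}_w} \colonequals \{(g, B') \in G^{rs} \times \B \mid B' \preceq_w \act{g}B'\} \subset G^{rs} \times \B,$$
where $B' \preceq_w B''$ means $B' \sim_v B''$ for some $v \leq w$. The fiber of the projection $\pi: \cl{\mathcal{Z}_w} \to G^{rs}$ over $\s$ is precisely $\cl{\Y_{w,\s}}$. Observe that $\cl{\mathcal{Z}_w} = \Phi^{-1}(\cl{\O_w})$ where $\Phi: G^{rs}\times \B \to \B \times \B$ sends $(g, B') \mapsto (B', \act{g}B')$. The morphism $\Phi$ is smooth, since its differential is surjective (as $G$ acts transitively on $\B$). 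Therefore $\cl{\mathcal{Z}_w}$ is Cohen-Macaulay, inheriting this property from the Cohen-Macaulay Schubert variety $\cl{\O_w}$. Since every fiber $\cl{\Y_{w,\s}}$ is equidimensional of dimension $\ell(w)$ (as recalled before the lemma) and the base $G^{rs}$ is smooth, miracle flatness ensures that $\pi$ is flat.

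Next, $G$ is a connected reductive group, hence a rational variety (e.g., via the big cell $U^- T U \hookrightarrow G$), so its open subvariety $G^{rs}$ is also rational, and in particular any two closed points of $G^{rs}$ are rationally equivalent in $A_0(G^{rs})$. By flatness of $\pi$, one has $[\cl{\Y_{w,\s}}] = i_\s^*[\cl{\mathcal{Z}_w}]$ in $A_*(\B)$, where $i_\s: \B = \{\s\} \times \B \hookrightarrow G^{rs}\times \B$, and this pullback depends on $\s$ only through the class $[\s] \in A_0(G^{rs})$. Applying the standard specialization principle for Chow groups in a flat family (cf.\ \cite[Chapter 10]{fulton}; concretely one may connect any two closed points of $G^{rs}$ by a rational curve and use $[\{0\}]=[\{\infty\}]$ in $A_0(\mathbb{P}^1)$), we conclude that $[\cl{\Y_{w,\s}}]\in A_*(\B)$ is independent of $\s$.

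The main technical step is verifying flatness of $\pi$; once flatness is established, the constancy of the Chow class follows routinely from the observation that the Gysin pullback along the inclusion of a point of a rational variety is insensitive to the chosen point.
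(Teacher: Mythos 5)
Your proposal is correct in its conclusion and takes a genuinely different route from the paper, but there are a few points that need tightening before it is a complete proof.

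The paper's own argument works over $T^{rs}$ rather than $G^{rs}$: it cites the proof of \cite[Proposition~1.2]{lu:reflection} to get that each stratum $\Y_{w,\s}^{w'}\cong U^-\cap\act{w'^{-1}}U\cap BwB$ fits into a \emph{trivial} fiber bundle over $T^{rs}$, deduces that each irreducible component of $\Y_{w,\s}$ is generically trivial over $T^{rs}$, uses rationality of $T^{rs}$ to conclude constancy of the class for $\s\in T^{rs}$, and then passes to arbitrary regular semisimple $\s$ via $\act{h}\Y_{w,g}=\Y_{w,\act{h}g}$ together with triviality of the $G$-action on $A_*(\B)$. Your approach replaces the local triviality input with flatness of $\pi:\cl{\mathcal{Z}_w}\to G^{rs}$, obtained from Cohen--Macaulayness of the Schubert variety $\cl{\O_w}$ and miracle flatness. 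This is more self-contained (you do not need the explicit product structure from \cite{lu:reflection}), at the cost of a genuine verification of flatness. Both routes ultimately reduce to the same rational-connectivity-of-the-base observation.

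Points to fill in. First, you assert that the (scheme-theoretic) fiber of $\pi$ over $\s$ is $\cl{\Y_{w,\s}}$, but a priori this fiber is $\Gamma_\s\cap\cl{\O_w}$, which a priori could strictly contain $\cl{\Gamma_\s\cap\O_w}$ and could be nonreduced. One needs the transversality of $\Gamma_\s$ with each $\O_v$ (cited in the paper from \cite[Lemma~1.1]{lu:reflection}) to see that every component of $\Gamma_\s\cap\cl{\O_w}$ has the expected dimension $\ell(w)$, hence meets the open stratum $\Gamma_\s\cap\O_w$ and so lies in its closure; this, together with Cohen--Macaulayness plus generic reducedness, gives both the set-theoretic and scheme-theoretic identification. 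Second, miracle flatness requires knowing $\dim_x\cl{\mathcal{Z}_w}-\dim_{\pi(x)}G^{rs}=\dim_x\pi^{-1}(\pi(x))$ at every point, so you should note that $\cl{\mathcal{Z}_w}$ is equidimensional; this follows because $\Phi$ is smooth of constant relative dimension $\dim B$ and $\cl{\O_w}$ is irreducible. Third, and most importantly, the phrase ``one may connect any two closed points of $G^{rs}$ by a rational curve'' is the crux and is left unjustified. Since $G^{rs}$ is quasi-affine there is no nonconstant $\mathbb{P}^1\to G^{rs}$, so ``rationally connected'' in the usual sense fails; what you need is a nonconstant morphism from an open $C\subset\mathbb{P}^1$ into $G^{rs}$ through two prescribed points, and that is not automatic from rationality of $G^{rs}$ alone. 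The simplest repair is exactly the paper's last step: conjugate $\s_0,\s_1$ into $T^{rs}$ (allowed since the adjoint $G$-action on $\B$ is trivial on $A_*(\B)$), and then join two points of $T^{rs}$ by the intersection of a line in $\mathbb{A}^r\supset T$ with $T^{rs}$, which is a dense open subset of $\mathbb{A}^1$. Once that rational curve is in hand, \cite[Example~10.1.2]{fulton} applied to the closure in $\mathbb{P}^1\times\B$ of the pulled-back family (which, by flatness and the reducedness of fibers, has fibers exactly $[\cl{\Y_{w,\s_i}}]$ with multiplicity one at the marked points) gives the desired equality of classes.
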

\begin{proof} By the proof of \cite[Proposition 1.2]{lu:reflection}, an aforementioned variety $\Y_{w,\s}^{w'}$ defines a trivial fiber bundle on the set of regular semisimple elements in $T$, denoted $T^{rs}$. Therefore, any irreducible component of $\Y_{w,\s}$ is also generically trivial on $T^{rs}$. That is, for each $\s \in T^{rs}$ we can continuously choose each irreducible component of $\Y_{w,\s}$ on $T^{rs}$, and there exists a dense open subset $U_{\s}$ of the component such that $U_{\s}$ gives a trivial fiber bundle on $T^{rs}$. Since $T^{rs}$ is rational, thus rationally connected, we see that $[\cl{U_{\s}}]$ is independent of $\s\in T^{rs}$. As $\cl{\Y_{w,\s}}$ is the union of such $\cl{U_{\s}}$ for any $ \s\in T^{rs}$, the class $[\cl{\Y_{w,\s}}]$ is also independent of $\s \in T^{rs}$. Since in general $\act{h}\Y_{w, g} = \Y_{w, \act{h}g}$ for any $h \in G$ and $T^{rs}$ meets every conjugacy class of regular semisimple elements, we get the result.
\end{proof}
Thus we have
\begin{prop} \label{ss:winv}$[\cl{\Y_{w,\s}}] = [\cl{\Y_{w^{-1},\s}}]$.
\end{prop}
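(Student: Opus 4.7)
The plan is to combine the two preceding lemmas. By Lemma \ref{inverse} applied to $g=\s$, we have $\Y_{w,\s}=\Y_{w^{-1},\s^{-1}}$, hence
\[
[\cl{\Y_{w,\s}}] = [\cl{\Y_{w^{-1},\s^{-1}}}] \quad \text{in } A_*(\B).
\]
So the proposition reduces to showing that the right-hand side equals $[\cl{\Y_{w^{-1},\s}}]$, i.e.\ that the class is unchanged when the regular semisimple element $\s^{-1}$ is replaced by $\s$.

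This is precisely the content of Lemma \ref{ss:indep}, provided we verify that $\s^{-1}$ is again a regular semisimple element. This is immediate: an element commutes with $\s$ if and only if it commutes with $\s^{-1}$, so $C_G(\s)=C_G(\s^{-1})$, and since $\s$ is semisimple so is $\s^{-1}$. Thus $\s^{-1}$ has the same (torus) centralizer as $\s$ and is regular semisimple.

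Applying Lemma \ref{ss:indep} to the regular semisimple element $\s^{-1}$ (and to $w^{-1}$ in place of $w$) yields $[\cl{\Y_{w^{-1},\s^{-1}}}] = [\cl{\Y_{w^{-1},\s}}]$, and chaining the two equalities gives the desired identity. There is no genuine obstacle here; the only thing worth flagging is the elementary observation that inversion preserves regular semisimplicity, after which the result is an immediate consequence of Lemmas \ref{inverse} and \ref{ss:indep}.
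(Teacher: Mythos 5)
Your proof is correct and matches the paper's (implicit) argument exactly: the proposition is stated immediately after Lemma \ref{inverse} and Lemma \ref{ss:indep} with the phrase ``Thus we have,'' meaning it is deduced precisely by combining $\Y_{w,\s}=\Y_{w^{-1},\s^{-1}}$ with the independence of $[\cl{\Y_{w,\s}}]$ from the choice of regular semisimple element. Your additional remark that $\s^{-1}$ is again regular semisimple is a small but legitimate point worth making explicit.
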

As a result, unless every element in $W$ is an involution, $\{[\cl{\Y_{w,\s}}]\}_{w \in W}$ are not linearly independent. There is another, more nontrivial criterion which identifies some classes of such.

\begin{prop} \label{ss:ww'} Let $w, w' \in W$ such that $\supp(w) \cap \supp(w') = \emptyset$. Then $[\cl{\Y_{ww',\s}}] = [\cl{\Y_{w'w,\s}}]$ for any regular semisimple $\s\in G$.
\end{prop}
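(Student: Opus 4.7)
My plan is to exhibit both $\Y_{ww',\s}$ and $\Y_{w'w,\s}$ as birational images of a common subvariety of $\B \times \B$ and then to compare the resulting classes via Schubert calculus.

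Define
$$Z = \{(B_0, B_1) \in \B \times \B \mid B_0 \sim_w B_1,\ B_1 \sim_{w'} \act{\s}B_0\}.$$
The hypothesis $\supp(w) \cap \supp(w') = \emptyset$ implies $\ell(ww') = \ell(w) + \ell(w')$, so $ww' = w \cdot w'$ is a reduced decomposition and, for any $B_0 \in \Y_{ww',\s}$, the middle Borel $B_1$ in a chain $B_0 \sim_w B_1 \sim_{w'} \act{\s}B_0$ is uniquely determined. Thus the first projection $\pi_1 \colon Z \to \B$ is a birational isomorphism onto $\Y_{ww',\s}$. Applying $\ad(\s)$ to $B_0 \sim_w B_1$ gives $\act{\s}B_0 \sim_w \act{\s}B_1$, and concatenating with $B_1 \sim_{w'} \act{\s}B_0$ yields $B_1 \sim_{w'w} \act{\s}B_1$; symmetrically, $\pi_2 \colon Z \to \B$ is a birational isomorphism onto $\Y_{w'w,\s}$. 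Hence $(\pi_1)_*[\cl Z] = [\cl{\Y_{ww',\s}}]$ and $(\pi_2)_*[\cl Z] = [\cl{\Y_{w'w,\s}}]$.

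To make these pushforwards explicit, I would write $Z = \cl{\O_w} \cap \mu^{-1}(\cl{\O_{w'}})$ for $\mu(B_0, B_1) = (B_1, \act{\s}B_0)$, and observe that $\mu$ factors as the swap $\tau$ composed with an element of the identity component of $\mathrm{Aut}(\B \times \B)$ (coming from $\ad(\s) \in G$); therefore $\mu^*$ agrees with $\tau^*$ on $A_*(\B \times \B)$, and $[\cl Z] = [\cl{\O_w}] \cdot [\cl{\O_{w'^{-1}}}]$ assuming generic transversality, which I would verify by an argument parallel to the one used in Section~3. Expanding via Proposition~\ref{orbitdecomp} and pushing forward through $\pi_1$ using the Poincar\'e pairing $\int [\cl{C_a}] \cdot [\cl{C_b}] = \delta_{b,\,w_0 a}$ yields the Schubert expansion
$$[\cl{\Y_{ww',\s}}] = \sum_{u \in W :\ u(ww')^{-1} \text{ reduced}} [\cl{C_{u(ww')^{-1}}}] \cdot [\cl{C_{w_0 u}}],$$
with the analogous formula for $\Y_{w'w,\s}$ obtained by replacing $(ww')^{-1}$ with $(w'w)^{-1}$.

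The main obstacle is the final Schubert-calculus identity: the two sums are indexed by different subsets of $W$ (respectively, $u$ such that $u(ww')^{-1}$ or $u(w'w)^{-1}$ is a reduced product), and the summands themselves involve Schubert classes attached to distinct Weyl group elements, so a termwise comparison fails. Since $(ww')^{-1}$ and $(w'w)^{-1}$ are conjugate in $W$ via $w'$, the identity reduces to a conjugation-invariance statement for the formal expression $F(A) := \sum_{u:\, uA\text{ reduced}} [\cl{C_{uA}}] \cdot [\cl{C_{w_0 u}}]$. I would verify this by constructing an explicit bijection of the indexing sets tailored to the disjoint-support hypothesis: starting from reduced expressions $s_{i_1}\cdots s_{i_r}$ of $w$ and $s_{j_1}\cdots s_{j_m}$ of $w'$ with letters in $I$ and $J$ respectively, the cyclic shift that rotates the $s_{i_k}$'s through the $s_{j_l}$'s stays reduced at every step because $I \cap J = \emptyset$ guarantees $\ell(w'w) = \ell(w) + \ell(w')$ throughout, and this combinatorial bijection translates into the required identity of Schubert sums using commutativity of the product in $A^*(\B)$.
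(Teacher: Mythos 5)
Your reduction to the correspondence
$$Z = \{(B_0, B_1) \in \B \times \B \mid B_0 \sim_w B_1,\ B_1 \sim_{w'} \act{\s}B_0\}$$
is sound: the disjoint-support hypothesis forces $\ell(ww') = \ell(w) + \ell(w') = \ell(w'w)$, so both projections $\pi_1$, $\pi_2$ are birational and proper onto $\Y_{ww',\s}$ and $\Y_{w'w,\s}$, and (modulo checking generic transversality, which is plausible by the same kind of argument as Lusztig's) pushing forward the intersection class $[\cl{\O_w}] \cdot \mu^*[\cl{\O_{w'}}]$ does recover the Schubert expansion for each side. So far this is a clean way to re-derive the formula $[\cl{\Y_{w,\s}}] = \sum_{uwv^{-1}=w_0,\ \ell(u)+\ell(v)=\ell(w_0)-\ell(w)} [\cl{C_{w_0u}}]\cdot[\cl{C_{w_0v}}]$ applied to $ww'$ and to $w'w$.

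The gap is in the final step. You have reduced the proposition to the purely combinatorial identity
$$\sum_{\substack{u(ww')v^{-1}=w_0\\\ell(u)+\ell(v)=\ell(w_0)-\ell(ww')}} [\cl{C_{w_0u}}]\cdot[\cl{C_{w_0v}}] = \sum_{\substack{u(w'w)v^{-1}=w_0\\\ell(u)+\ell(v)=\ell(w_0)-\ell(w'w)}} [\cl{C_{w_0u}}]\cdot[\cl{C_{w_0v}}],$$
and your argument for it --- a bijection of indexing sets coming from cyclically rotating the letters of $w$ past those of $w'$ --- does not get off the ground: a bijection of index pairs $(u,v) \leftrightarrow (u',v')$ is useless here unless the summands themselves match, and the natural candidates (e.g.\ $u' = u(w')^{-1}$, $v'=v(w')^{-1}$, or swapping $u \leftrightarrow v$) neither preserve the length constraints nor the Schubert product $[\cl{C_{w_0u}}]\cdot[\cl{C_{w_0v}}]$. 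In fact the paper explicitly flags exactly this identity as not combinatorially obvious and poses a purely combinatorial proof as an open problem (see the remark following the double-Schubert-polynomial computation in the type~A section); so the step you describe as something you "would verify" is the entire content of the proposition, and cannot be treated as a routine lemma. The paper avoids this difficulty entirely with a geometric degeneration: choose a rational curve $f\colon \P^1 \dashrightarrow T$ with $f(1)=\s$ and $f(0)=\mathrm{id}$, form the families $\bY_{w,w'}$ and $\bY_{w',w}$ over this base, show they are isomorphic over the base and that the special fibers at $t=0$ agree (both reduce to the diagonal precisely because $\supp(w)\cap\supp(w')=\emptyset$), and conclude that the fibers over regular semisimple $t$ have the same class. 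That specialization argument bypasses Schubert calculus altogether, which is what makes the proposition tractable.
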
 
\begin{proof} Without loss of generality we assume $ \s\in T$ and consider a rational map $f:\P^1 \dashrightarrow T$ such that $f(1) = \s$ and $f(0) = id$. Denote by $U \subset \P^1$ the maximal open subset of $\P^1$ where $f$ is well-defined. Define
$$\bY_{w, w'} = \{ (B', B'', t) \in \B \times \B \times U \mid B' \sim_{w} B'', B'' \sim_{w'} \act{f(t)}B'\}.$$
Note that each fiber is isomorphic to $\Y_{ww', f(t)}$. We define $\bY_{w', w}$ similarly. 

The automorphism $\varphi: \B\times \B\times U \rightarrow \B \times \B\times U : (B', B'', t) \mapsto (B'', \act{f(t)}B', t)$ sends $\bY_{w, w'}$ isomorphically to $\bY_{w', w}$ with inverse $\varphi^{-1}: (B', B'', t) \mapsto (\act{f(t)^{-1}}B'', B', t)$, thus induces an isomorphism $\varphi : \cl{\bY_{w, w'}} \mapsto \cl{\bY_{w', w}}$ with inverse $\varphi^{-1}$, where the closure is taken in $\B\times \B \times U$. (Here we use $\supp(w) \cap \supp(w') = \emptyset,$ thus $\ell(w)+\ell(w') = \ell(ww') = \ell(w'w).$) 

Note that 
$$\cl{\bY_{w, w'}} \subset Z_{w, w'} \colonequals \{ (B', B'', t) \in \B \times \B \times U \mid \exists u \leq w, v \leq w' \text{ s.t. } B' \sim_{u} B'', B'' \sim_{v} \act{f(t)}B'\}$$
(similarly for $\cl{\bY_{w', w}}$) since the latter set is clearly closed and contains $\bY_{w, w'}$. We claim that $\varphi$ is the identity on $Z_{w, w'}\cap \B\times \B \times \{0\}$ (set-theoretically.) Indeed, 
\begin{align*}
Z_{w, w'}\cap \B\times \B \times \{0\} &= \{ (B', B'', 0) \in \B \times \B \times \{0\} \mid \exists u \leq w, v \leq w' \text{ s.t. } B' \sim_{u} B'', B'' \sim_{v} B'\}
\\&=\{(B', B', 0) \in \B \times \B \times \{0\}\}
\end{align*}
since $\supp(w) \cap \supp(w') = \emptyset$. Now it is clear that $\varphi$ acts as the identity on this set. Thus $\varphi$ also acts as the identity on $\cl{\bY_{w, w'}} \cap \B\times \B \times \{0\}$. In other words, set-theoretically $\cl{\bY_{w, w'}} \cap \B\times \B \times \{0\} = \cl{\bY_{w', w}} \cap \B\times \B \times \{0\}$. 

Since the isomorphism $\varphi : \cl{\bY_{w, w'}} \rightarrow \cl{\bY_{w', w}}$ preserves $t$, we see that the scheme-theoretic intersection of either $\cl{\bY_{w, w'}}$ or $\cl{\bY_{w', w}}$ with $\B\times \B \times \{t\}$ for any $t$ are also isomorphic. For $t=0$, it follows that every irreducible component of the intersection of either $\cl{\bY_{w, w'}}$ or $\cl{\bY_{w', w}}$ with $\B\times \B \times \{0\}$ has the same multiplicity on its scheme-theoretic intersection. In other words, the scheme-theoretic fiber of either $\cl{\bY_{w, w'}}$ or $\cl{\bY_{w', w}}$ at $t=0$ has the same class in $A_*(\B\times \B)$. We call it $[Z] \in A_*(\B\times \B)$.

Let $V \subset U$ be the set of $t \in U$ such that $f(t)$ is regular semisimple. We may assume that the set-theoretic fiber of $\cl{\bY_{w, w'}}$ or $\cl{\bY_{w', w}}$ at $1\in V$ is equivalent to its scheme-theoretic fiber, i.e. the intersections of $\cl{\bY_{w, w'}}$ or $\cl{\bY_{w', w}}$ and $\B\times \B\times\{1\}$ are transversal, since it is true for generic $t\in V$. (Indeed, it is easy to prove that it holds for any $t \in V$.) On the other hand, by the proof of \ref{ss:indep}, $\cl{\bY_{w, w'}}$ on $V$ is the closure of a trivial bundle. Thus the fiber of $\cl{\bY_{w, w'}}$ at $t \in V$ is the same as the closure of the fiber of $\bY_{w, w'}$ at $t$, which is isomorphic to
$$F_{w, w',f(t)} \colonequals \{ (B', B'') \in \B \times \B \mid \exists u \leq w, v\leq w' \text{ s.t. } B' \sim_u B'', B'' \sim_v \act{f(t)}B'\}.$$
(Similarly we define $F_{w', w, f(t)}$.)
Therefore, for $t=0,1$ we have
$$[F_{w, w', \s}] =[Z]= [F_{w', w, \s}]$$
on $A_*(\B\times \B).$

To conclude, let $\pi_1 : \B \times \B \rightarrow \B$ be the projection on the first factor. Clearly $(\pi_1)_*([F_{w, w', \s}]) = [\cl{\Y_{ww', \s}}]$ since $\pi_1$ restricts to a birational finite morphism from $F_{w, w', \s}$ to $\cl{\Y_{ww', \s}}$. Similarly we have $(\pi_1)_*([F_{w', w, \s}]) = [\cl{\Y_{w'w, \s}}]$. As $[F_{w, w', \s}] = [F_{w', w, \s}]$ we get the result.
\end{proof}

\begin{rmk} In general, for $w, w' \in W$ even if $\ell(w) + \ell(w') = \ell(ww') = \ell(w'w)$, $ [\cl{\Y_{ww', \s}}]$ might not equal $ [\cl{\Y_{w'w, \s}}]$. That is, condition for disjoint supports is essential for the proof of this proposition.
\end{rmk}

\subsection{$[\cl{\Y_{w,\s}}]$ in terms of Schubert classes} In a similar way to a Deligne-Lusztig variety, we calculate $[\cl{\Y_{w,\s}}]$ in terms of Schubert classes. To that end, we let $\Gamma_\s$ be the graph of $\ad(\s)$ in $\B \times \B$ where $\s\in G$ is again regular semisimple. We start with the following lemma.
\begin{lemma} The intersection of $\Gamma_\s$ with $\cl{\O_w}$ is generically transversal. More precisely, the intersection of $\Gamma_\s$ with $\O_w$ is (nonempty and) transversal.
\end{lemma}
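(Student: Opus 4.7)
The plan is to compute tangent spaces directly, in the spirit of \cite[Lemma 9.11]{dl}, now accounting for the fact that $d(\ad(\s))$ is nonzero (unlike $dF$ in the Frobenius case). Fix a point $(B',\act{\s}B') \in \Gamma_\s\cap\O_w$ and identify $T_{(B',\act{\s}B')}(\B\times\B) \cong \g/\b'\oplus\g/\act{\s}\b'$. The relevant tangent spaces are
$$T\Gamma_\s = \{(\bar x,\,\overline{\ad(\s)(x)}) : x\in\g\},\qquad T\O_w = \{(\bar y,\bar y) : y\in\g\}$$
(the second arising from the diagonal $G$-action). A direct elimination shows that these two subspaces span the full tangent space if and only if
$$(\mathrm{id} - \ad(\s))(\g) + \act{\s}\b' = \g.$$

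Let $T_\s \colonequals Z_G(\s)^0$ be the unique maximal torus containing $\s$, with Lie algebra $\t_\s$, and decompose $\g = \t_\s \oplus \bigoplus_\alpha \g_\alpha$ into $T_\s$-weight spaces. Regularity of $\s$ forces $\alpha(\s)\neq 1$ for every root $\alpha$, so $(\mathrm{id} - \ad(\s))(\g) = \bigoplus_\alpha \g_\alpha$. The spanning condition thus reduces to the assertion that the Borel subalgebra $\act{\s}\b'$ surjects onto $\t_\s$ under the projection $\g\twoheadrightarrow\t_\s$ along the root spaces.

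The main step is to verify this surjectivity for any Borel. Using a nondegenerate $\ad$-invariant bilinear form on $\g$, one has $(\act{\s}\b')^\perp$ equal to the nilradical of $\act{\s}\b'$ and $(\bigoplus_\alpha\g_\alpha)^\perp = \t_\s$; so the surjection fails exactly when the nilradical meets $\t_\s$ nontrivially. But any element in such an intersection is simultaneously $\ad$-nilpotent (lying in a nilradical) and $\ad$-semisimple (lying in a Cartan), hence $\ad$-zero by the uniqueness of Jordan decomposition, hence central in $\g$; and the center meets every nilradical trivially. Nonemptiness of $\Gamma_\s\cap\O_w$ is clear (for $w=e$ any Borel containing $T_\s$ works, and the stratification from the preceding Lemma produces nonempty strata in general), and generic transversality with $\cl{\O_w}$ is automatic since $\O_w$ is open and dense in $\cl{\O_w}$. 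The one delicate point is the availability of a nondegenerate invariant form in bad characteristic, which can be sidestepped by checking the surjectivity directly via the Bruhat decomposition of an element sending a reference Borel to $\act{\s}B'$.
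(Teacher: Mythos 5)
Your argument is correct, and it is genuinely different from what the paper does: the paper simply cites \cite[Lemma 1.1]{lu:reflection}, whereas you give a self-contained tangent-space computation. Your computation exactly parallels the paper's own proof of Proposition \ref{unip:trans} for the regular unipotent case --- the same identification of tangent spaces, the same elimination leading to a condition of the form $\mathrm{im}(\mathrm{id}-\ad(g)) + (\text{Borel}) = \g$, and the same use of an invariant form to translate this into a vanishing condition for an intersection. The simplification unique to the semisimple case, which you exploit correctly, is that $\mathrm{id}-\ad(\s)$ acts invertibly on each $T_\s$-root space and kills $\t_\s$, so its image is precisely $\bigoplus_\alpha\g_\alpha$; this reduces everything to a statement about a single fixed decomposition of $\g$, rather than the case-by-case analysis over elliptic/Coxeter elements the paper needs for $\u$.

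One small caution and one confirmation. The caution: the chain ``$\ad$-semisimple and $\ad$-nilpotent $\Rightarrow$ $\ad$-zero $\Rightarrow$ central $\Rightarrow$ not in a nilradical'' is the weakest link, since in bad characteristic the center of $\g$ need not sit inside every Cartan and the invariant form may be degenerate (e.g.\ $\mathfrak{sl}_2$ in characteristic $2$). You flag this, which is good, but it should not be left as a gesture. The confirmation: the sidestep you propose does close the gap cleanly. Writing $\act{\s}\b' = \act{t_1 u_1}(\act{w}\b_0)$ with $t_1 \in T_\s$, $u_1 \in U_0$, $w\in W$ via Bruhat, one has $\t_\s \subset \act{w}\b_0$, the operator $(\ad(u_1)-\mathrm{id})$ maps $\t_\s$ into $\n_0 \subset \bigoplus_\alpha\g_\alpha$, and $\ad(t_1)$ commutes with the projection $\g \twoheadrightarrow \t_\s$; hence $\act{u_1}\t_\s \subset \act{\s}\b'$ already surjects onto $\t_\s$. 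If you replace the Killing-form paragraph with these two sentences, the proof is complete and characteristic-free, which is what the paper requires in this section (``the characteristic of $\k$ is arbitrary'').
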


\begin{proof} \cite[Lemma 1.1]{lu:reflection}
\end{proof}

Therefore similarly to Deligne-Lusztig varieties, we have $[\cl{\Y_{w,\s}}] = (id, \ad(\s))^*([\cl{\O_w}])$. Since $\ad(\s)$ is an automorphism on $\B$ which acts trivially on $A_*(\B)$, it is equivalent to $(id, id)^*([\cl{\O_w}])$. By Proposition \ref{orbitdecomp}, it is equal to
$$(id, id)^*\left(\sum_{\substack{u, v \in W \\ uwv^{-1}=w_0\\\ell(u)+\ell(v)=\ell(w_0)-\ell(w)}}[\cl{C_{w_0u}} \times \cl{C_{w_0v}}]\right) = \sum_{\substack{u, v \in W \\ uwv^{-1}=w_0\\\ell(u)+\ell(v)=\ell(w_0)-\ell(w)}}[\cl{C_{w_0u}}] \cdot [\cl{C_{w_0v}}].$$
Thus we obtain the following.
\begin{thm} For $\s\in G$ regular semisimple and for $w\in W$, we have 
$$[\cl{\Y_{w,\s}}] =  \sum_{\substack{u, v \in W \\ uwv^{-1}=w_0\\\ell(u)+\ell(v)=\ell(w_0)-\ell(w)}}[\cl{C_{w_0u}}] \cdot [\cl{C_{w_0v}}].$$
\end{thm}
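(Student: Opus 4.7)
The plan is to mimic the strategy used earlier for $[\cl{X(w)}]$, but with the graph $\Gamma_F$ of the Frobenius replaced by the graph $\Gamma_\s$ of $\ad(\s)$. The key simplification in the regular semisimple case, compared with the Frobenius case, is that $\ad(\s)$ is an automorphism of $\B$ that is homotopic to the identity (it is conjugation by an element of the connected group $G$), so it acts trivially on the Chow ring $A_*(\B)$. This means we do not pick up any extra $q^{\ell(v)}$ factors as we did in Theorem~\ref{dl:class}.

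Concretely, first I would invoke the lemma just stated that $\Gamma_\s \cap \cl{\O_w}$ is generically transversal; by construction $\Gamma_\s \cap \O_w$ is isomorphic to $\Y_{w,\s}$ via the first projection, and its closure inside $\B\times\B$ is $\Gamma_\s \cap \cl{\O_w}$ (using $\cl{\O_w} = \bigsqcup_{v\le w}\O_v$ exactly as in the Frobenius case). Generic transversality then identifies
$$[\cl{\Y_{w,\s}}] = (id,\ad(\s))^*[\cl{\O_w}] \in A_*(\B),$$
where $(id,\ad(\s))^*$ is the Gysin pullback along the regular embedding $(id,\ad(\s)) : \B \hookrightarrow \B\times\B$.

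Next, since $\ad(\s)$ is an automorphism of $\B$ that acts trivially on $A_*(\B)$, the pullback $(id,\ad(\s))^*$ coincides with $(id,id)^*$ on any class of the form $[\cl{C_{w_0u}}\times\cl{C_{w_0v}}]$: indeed, $(id,\ad(\s))^*[\cl{C_{w_0u}}\times\cl{C_{w_0v}}] = [\cl{C_{w_0u}}]\cdot\ad(\s)^*[\cl{C_{w_0v}}] = [\cl{C_{w_0u}}]\cdot[\cl{C_{w_0v}}]$. Now I would apply Proposition~\ref{orbitdecomp}, which expresses $[\cl{\O_w}]$ as a sum of Kunneth classes $[\cl{C_{w_0u}}\times\cl{C_{w_0v}}]$ indexed by pairs $(u,v)$ with $uwv^{-1}=w_0$ and $\ell(u)+\ell(v)=\ell(w_0)-\ell(w)$, and pull this sum back termwise.

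The main obstacle is philosophical rather than computational: one needs to be comfortable with the claim that $\ad(\s)^*$ acts trivially on $A_*(\B)$. This follows because $\ad(\s)$ is the restriction to the point $\s$ of the morphism $G \times \B \to \B$, $(g,B')\mapsto \act{g}B'$, so it is algebraically equivalent (in fact connected through a one-parameter family of automorphisms) to $\ad(e) = id$; equivalently, $G$ being connected acts trivially on $A_*(\B)$ by homogeneity. Once this and the generic transversality are in hand, all the remaining steps are formal manipulations, and the desired identity drops out immediately.
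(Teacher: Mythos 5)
Your proposal follows the paper's proof essentially verbatim: generic transversality of $\Gamma_\s$ with $\cl{\O_w}$ gives $[\cl{\Y_{w,\s}}] = (id,\ad(\s))^*[\cl{\O_w}]$, triviality of the $\ad(\s)$-action on $A_*(\B)$ reduces this to $(id,id)^*[\cl{\O_w}]$, and Proposition~\ref{orbitdecomp} finishes. Your added justification of why $\ad(\s)^*$ is trivial (connectedness of $G$) is a detail the paper states without comment, but nothing substantive differs.
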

Observe that it is equal to $[\cl{X(w)}]$ if $F$ acts trivially on $W$ and $"q=1"$.

\subsection{Class of an irreducible component of $\cl{\Y_{w,\s}}$} $\cl{\Y_{w,\s}}$ is not in general irreducible by Theorem \ref{ss:comp}. Thus it is natural to ask whether the class of each irreducible component of $\cl{\Y_{w,\s}}$ is the same. We claim that this is indeed the case. Let $\s \in T$, $w\in W$ and $I = \supp(w)$. Choose $\{u_j\in W\}_{j\in J}$ such that $\{u_jW_I\}_{j\in J}$ becomes the complete collection of right $W_I$-cosets. For each $u_j$, we define
$$\rho_j: \Y_{L_I, w, \act{u_j^{-1}}\s} \rightarrow \Y_{w,\s}: B'_I \mapsto \act{u_j}(B'_I U_I),$$
where $\Y_{L_I, w, \act{u_j^{-1}}\s}$ is defined in the same way corresponding to the Levi subgroup $L_I$ of the parabolic subgroup $P_I$, with the maximal torus $T \subset L_I$ and the Borel subgroup $B_I = L_I\cap B$. (This defines a natural injection $W_I \subset W$ of Coxeter groups and $w$ is considered an element of $W_I$.) Also $U_I$ is the unipotent radical of $P_I$.
\begin{lemma} \label{ss:push}The map above is well-defined and injective.
\end{lemma}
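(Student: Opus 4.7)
The plan is to track how relative positions transform under the two operations defining $\rho_j$: first the inclusion $\B_{L_I} \hookrightarrow \B$ given by $B'_I \mapsto B'_I U_I$, and then the $G$-equivariant automorphism $\act{u_j}$ of $\B$. Two preliminary observations are needed. First, since $\s \in T$ and any lift of $u_j$ to $N(T)$ normalizes $T$, the conjugate $\act{u_j^{-1}}\s$ lies in $T \subset L_I$, so the source $\Y_{L_I, w, \act{u_j^{-1}}\s}$ makes sense. Second, the map $B'_I \mapsto B'_I U_I$ identifies $\B_{L_I}$ with the set of Borel subgroups of $G$ contained in $P_I$, and under this identification the Bruhat stratification of $P_I$ with respect to $B$ matches the part of the Bruhat stratification of $G$ indexed by $W_I \subset W$; in particular, for $w \in W_I$, $B'_I \sim_w B''_I$ in $\B_{L_I}$ if and only if $B'_I U_I \sim_w B''_I U_I$ in $\B$.

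For well-definedness, I take $B'_I \in \Y_{L_I, w, \act{u_j^{-1}}\s}$. Since $\act{u_j^{-1}}\s \in L_I$ and $U_I$ is normal in $P_I$, I would rewrite $\act{\act{u_j^{-1}}\s}(B'_I U_I) = (\act{\act{u_j^{-1}}\s} B'_I)\, U_I$. Combined with the compatibility of Bruhat decompositions above, the defining condition in $\Y_{L_I, w, \act{u_j^{-1}}\s}$ then yields $B'_I U_I \sim_w \act{\act{u_j^{-1}}\s}(B'_I U_I)$ in $\B$. Applying the $G$-automorphism $\act{u_j}$, which preserves $\sim_w$, together with the identity $\act{u_j} \circ \act{\act{u_j^{-1}}\s} = \act{\s} \circ \act{u_j}$, produces $\act{u_j}(B'_I U_I) \sim_w \act{\s}\act{u_j}(B'_I U_I)$, placing the image in $\Y_{w,\s}$.

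Injectivity is essentially formal. If $\rho_j(B'_I) = \rho_j(B''_I)$, applying $\act{u_j^{-1}}$ gives $B'_I U_I = B''_I U_I$ as subgroups of $P_I$, and then projecting along $P_I \twoheadrightarrow P_I/U_I = L_I$ recovers $B'_I = B''_I$, since each $B'_I$ equals the image of its preimage $B'_I U_I$ under the quotient map.

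The only substantive input is the compatibility of Bruhat decompositions under the inclusion $\B_{L_I} \hookrightarrow \B$, which is a standard feature of parabolic subgroups, so I do not anticipate any real obstacle. Everything else is bookkeeping with conjugations and the observation that $\act{u_j^{-1}}\s \in L_I$ commutes with the operation of adjoining $U_I$.
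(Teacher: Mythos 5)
Your proof is correct and follows essentially the same route as the paper: both arguments reduce well-definedness to the fact that the inclusion $B'_I \mapsto B'_I U_I$ of $\B_{L_I}$ into $\B$ respects relative position by $w \in W_I$, together with the $G$-equivariance of $\sim_w$ and the conjugation identity $\act{u_j}\circ\act{\act{u_j^{-1}}\s}=\act{\s}\circ\act{u_j}$. The paper just unwinds this more concretely by writing $B'_I=\act{l}B_I$ for $l\in L_I$ and chasing conjugates by hand, and proves injectivity by intersecting with $L_I$ rather than projecting to $P_I/U_I$; these are cosmetic differences.
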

\begin{proof} Note that for a Borel subgroup $B_I'$ of $L_I$, $\act{u_j}(B'_I U_I)$ gives a Borel subgroup of $G$. If $B_I' \sim_{w} \act{\act{u_j^{-1}}\s}B_I'$, then there exists $l \in L_I$ such that $B_I' = \act{l}B_I$ and $\act{\act{u_j^{-1}}\s}B_I' = \act{lw}B_I$. Since $L_I$ normalizes $U_I$, $\act{u_j}(B'_I U_I)= \act{u_j}(\act{l}B_I U_I) = \act{u_j l}(B_IU_I) = \act{u_j l}B.$ Also,
$$\act{\s}(\act{u_j}(B'_I U_I)) = \act{u_j\act{u_j^{-1}}\s}(B'_I U_I) = \act{u_j}(\act{\act{u_j^{-1}}\s}B'_IU_I) = \act{u_j}(\act{lw}B_I U_I) = \act{u_j lw}(B_IU_I) = \act{u_j lw}B$$
since $l$ and $w$ normalizes $U_I$. Therefore the map $\rho_j$ is well-defined. Injectivity of $\rho_j$ follows from that $\act{u_j^{-1}}\rho_j(B_I')\cap L_I = B_I'.$
\end{proof}
Consider the coproduct of such morphisms
$$\rho\colonequals\sqcup_{j \in J} \rho_j : \bigsqcup_{j \in J}  \Y_{L_I, w, \act{u_j^{-1}}\s} \rightarrow \Y_{w,\s}.$$
\begin{lemma} \label{domin}The morphism above is injective and dominant.
\end{lemma}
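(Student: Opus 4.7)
The plan is to establish injectivity and dominance separately. Injectivity of each individual $\rho_j$ is already the content of Lemma \ref{ss:push}, so to handle $\rho = \sqcup_{j\in J}\rho_j$ the remaining task is to show that the images $\rho_j(\Y_{L_I, w, \act{u_j^{-1}}\s})$ are pairwise disjoint inside $\B$. Consider the canonical projection $\pi_I \colon \B \twoheadrightarrow G/P_I$. Using $B = B_I U_I$ and writing any Borel $B_I'$ of $L_I$ as $\act{l}B_I$ with $l \in L_I$, one computes $B_I' U_I = \act{l}(B_I U_I) = \act{l}B$, so $\rho_j(B_I') = \act{u_j l}B$ as a point of $G/B$, which projects to $u_j P_I \in G/P_I$. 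Hence $\pi_I \circ \rho_j$ is the constant map with value $u_j P_I$, and since the cosets $\{u_j P_I\}_{j\in J}$ are pairwise distinct by construction, the images of the $\rho_j$ lie in disjoint fibers of $\pi_I$. Combined with Lemma \ref{ss:push}, this yields injectivity of $\rho$.

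For dominance, I would compare dimensions and count components. First, $\act{u_j^{-1}}\s \in T$ is regular semisimple in $L_I$, because its centralizer in $L_I$ is contained in its centralizer in $G$, which is $T$. Viewing $w$ as an element of $W_I$, the support of $w$ inside $W_I$ is all of $I$, i.e.\ the full set of simple reflections of $L_I$. Therefore Theorem \ref{ss:comp} applied to $L_I$ shows that $\Y_{L_I, w, \act{u_j^{-1}}\s}$ is irreducible, and it is smooth of pure dimension $\ell(w)$ as noted at the beginning of the section. Consequently each $\rho_j(\Y_{L_I, w, \act{u_j^{-1}}\s})$ is an irreducible constructible subset of $\Y_{w,\s}$ of dimension $\ell(w)$, which matches the dimension of any irreducible component of $\Y_{w,\s}$; it is therefore dense in a unique component.

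Finally, Theorem \ref{ss:comp} tells us that $\Y_{w,\s}$ has exactly $|W/W_I| = |J|$ irreducible components. Combined with the disjointness of the images established above, each component of $\Y_{w,\s}$ must contain exactly one image $\rho_j(\Y_{L_I, w, \act{u_j^{-1}}\s})$, and that image is dense in it. Hence $\rho$ is dominant. The only real content is the projection computation $\pi_I \circ \rho_j \equiv u_j P_I$, which reduces to the identity $B = B_I U_I$; everything else is a formal consequence of structural results (dimension, smoothness, component count) already collected in the paper, so I do not expect any serious obstacle.
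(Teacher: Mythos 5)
Your proof is correct and takes essentially the same route as the paper: for injectivity, both arguments reduce to the observation that $\rho_j(B_I')$ is a Borel subgroup of $\act{u_j}P_I$ (equivalently, lies in the fiber of $\pi_I$ over $u_j P_I$), so distinct $j$ give disjoint images, and individual injectivity is Lemma \ref{ss:push}; for dominance, both compare pure dimension $\ell(w)$ with the component count $|W/W_I|$ from Theorem \ref{ss:comp}. You spell out a few steps the paper leaves implicit (that $\act{u_j^{-1}}\s$ is regular semisimple in $L_I$, and that $\supp(w)=I$ makes each $\Y_{L_I,w,\act{u_j^{-1}}\s}$ irreducible), but the underlying argument is the same.
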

\begin{proof} In order to prove $\rho$ is injective, note that for a Borel subgroup $B'_I$ of $L_I$, $\act{u_j}(B'_I U_I)$ is a Borel subgroup of $\act{u_j}P_I$. Thus for different $j$, the image of $\rho_j$ is disjoint to one another. Combined with the lemma above it follows that $\rho$ is injective. That $\rho$ is dominant follows form the fact that both $\bigsqcup_{j \in J}  \Y_{L_I, w, \act{u_j^{-1}}\s}$ and $\Y_{w,\s}$ are of pure dimension $\ell(w)$ with the same number of irreducible components.
\end{proof}

Let $\mathbb{Y}_w\rightarrow T^{rs}$ be the closure of the whole space of the fiber bundle $\Y_{w,\s}$ over $T^{rs}$, the set of regular semisimple elements in $T$. From the proof of Lemma \ref{ss:indep}, it is a closure of a trivial bundle on $T^{rs}$, thus $\mathbb{Y}_w$ is equal to the whole space of the fiber bundle $\cl{\Y_{w,\s}}$ and we may write
$$\mathbb{Y}_w = \bigcup_{j \in J} \mathbb{Y}_{w, j}$$
such that the fiber of each $\mathbb{Y}_{w, j}$ at $\s\in T^{rs}$ is each irreducible component of $\cl{\Y_{w,\s}}$.

Also note that the image of $\rho$ consists of Borel subgroups which is also a Borel subgroup of $\act{u_j}P_I$ for some $j \in J$. Since this is a closed condition, by Lemma \ref{domin} any $B' \in \cl{\Y_{w,\s}}$ also satisfies the same property. However, it is impossible for a Borel subgroup to be contained in $\act{u_j}P_I$ for different $j$'s. Thus we have a bijection
$$\{\text{irreducible components of }\cl{\Y_{w,\s}}\} \leftrightarrow \{\act{u_j}P_I\}_{j\in J}.$$
Since this is true for all $ \s\in T^{rs}$, we may reorder $\{\mathbb{Y}_{w,j}\}_{j\in J}$ such that every $(\s, B' \in \cl{\Y_{w,\s}}) \in \mathbb{Y}_{w,j}$ satisfies $B' \subset \act{u_j}P_I$.

For any $w' \in W$ we have a morphism
$$\ad(w') : \Y_{w,\s} \xrightarrow{\simeq} \Y_{w, \act{w'}\s},$$
which clearly induces an automorphism on $\mathbb{Y}_w$. Therefore there is a natural $W$-action on $\mathbb{Y}_w$ which permutes $\{\mathbb{Y}_{w,j}\}_{j\in J}$. We claim that $W$ permutes them transitively. Indeed, for $B' \subset {\act{u_j}}P_I$, we have $\act{w'}B' \subset \act{w'u_j}P_I$ and $W$ acts on $\{\act{u_j}P_I\}_{j\in J}$ transitively. Since for each $j$ a fiber of $\mathbb{Y}_{w,j}$ at any $\s\in T^{rs}$ gives the same class in $A_*(\B)$ (proof of Lemma \ref{ss:indep}) and the adjoint action of $G$ on $\B$ is trivial on $A_*(\B)$, we have the following theorem.
\begin{thm} \label{ss:same}The class of every irreducible component of $\cl{\Y_{w,\s}}$ is the same in $A_*(\B)$.
\end{thm}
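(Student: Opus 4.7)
The plan is to transport each irreducible component of $\cl{\Y_{w,\s}}$ to any other using the $W$-action on the total space $\mathbb{Y}_w \to T^{rs}$ by conjugation, and then to conclude equality of classes in $A_*(\B)$ by combining two facts: conjugation by an element of the connected group $G$ acts trivially on $A_*(\B)$, and the classes of fibers of the closure of a trivial bundle over $T^{rs}$ are independent of the base point (from the proof of Lemma \ref{ss:indep}).

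More concretely, I would index the irreducible components $\{\mathbb{Y}_{w,j}\}_{j\in J}$ of $\mathbb{Y}_w$ via the bijection with $\{\act{u_j}P_I\}_{j\in J}$ established in the preceding paragraph, so that the fiber $C_j^{\s}$ of $\mathbb{Y}_{w,j}$ at $\s \in T^{rs}$ is the unique irreducible component of $\cl{\Y_{w,\s}}$ whose Borel subgroups all sit inside $\act{u_j}P_I$. Given two indices $j, k \in J$, I would choose $w' \in W$ with $\act{w'}(\act{u_j}P_I) = \act{u_k}P_I$, which exists since $W$ acts transitively on $\{\act{u_j}P_I\}_{j\in J}$. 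Then the induced automorphism $\ad(w')$ of $\mathbb{Y}_w$ restricts to an isomorphism $\mathbb{Y}_{w,j} \xrightarrow{\sim} \mathbb{Y}_{w,k}$; in particular it sends $C_j^{\s}$ to the fiber $C_k^{\act{w'}\s}$ of $\mathbb{Y}_{w,k}$ at $\act{w'}\s$.

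Two invariance facts then finish the argument. First, since $\mathbb{Y}_{w,k}$ is the closure of a trivial fiber bundle on $T^{rs}$, all of its fibers have the same class in $A_*(\B)$, giving $[C_k^{\s}] = [C_k^{\act{w'}\s}]$. Second, $\ad(w')$ is an automorphism of $\B$ induced by the connected group $G$, hence acts trivially on $A_*(\B)$, giving $[C_j^{\s}] = [\ad(w')(C_j^{\s})] = [C_k^{\act{w'}\s}]$. Chaining these yields $[C_j^{\s}] = [C_k^{\s}]$, which is the desired conclusion.

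The point requiring most care is verifying that the combinatorial $W$-action on $\{u_j W_I\}_{j\in J}$ really lifts to a geometric permutation of the individual components $\{\mathbb{Y}_{w,j}\}$ (and not merely to a self-map of $\mathbb{Y}_w$ that scrambles components in some uncontrolled way). This hinges on the fact that a single Borel subgroup can lie in at most one $\act{u_j}P_I$, so the reordering is unambiguous; this was essentially the content of the bijection established immediately before the theorem.
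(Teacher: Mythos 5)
Your proposal is correct and follows essentially the same route as the paper: parametrize the components $\mathbb{Y}_{w,j}$ by $\{\act{u_j}P_I\}_{j\in J}$ (using that a Borel subgroup lies in at most one $\act{u_j}P_I$), transport one component to another via $\ad(w')$ for a lift of $w'\in W$ to $N(T)$, and then combine triviality of the $G$-conjugation action on $A_*(\B)$ with the fact that fibers of the closure of a trivial bundle over $T^{rs}$ all have the same class. The only cosmetic difference is that you make the two invariance steps explicit and name the fibers $C_j^{\s}$, which matches the paper's reasoning step for step.
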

\begin{rmk} For a Deligne-Lusztig variety, this is clear since $G^F$ acts transitively on the irreducible components of $\cl{X(w)}$. However, we do not have a direct action of $W$ on $\Y_{w,\s}$ per se, which makes the proof slightly complicated.
\end{rmk}

\section{An analogue of Deligne-Lusztig varieties corresponding to a regular unipotent element}
It is also reasonable to consider $\Y_{w,\u}$ for $\u\in G$ regular unipotent. In this section $\ch \k$ is good, i.e. it does not divide the coefficients of the highest root of the root system of $G$. Unlike $X(w)$ or $\Y_{w,\s}$, we will see that $\Y_{w,\u}$ is in general not smooth.

\subsection{Some pathologies of $\Y_{w,\u}$} On \cite[p. 550]{kawanaka}, the author asked the following question:
\begin{quote}
- Is $\Y_{w,\u}$ isomorphic to an $\ell(w)$-dimensional affine space?
\end{quote}
This is based on the following fact that if $\Y_{w,\u}$ is defined over $\F_q$, the number of $F$-fixed points is $q^{\ell(w)}$. Obviously we have
\begin{lemma} \label{unip:dim} $\dim \Y_{w,\u} = \ell(w).$
\end{lemma}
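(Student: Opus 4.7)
The plan is to deduce $\dim \Y_{w,\u}=\ell(w)$ directly from the point count quoted just before the lemma. First I would reduce to the setting where $\ch \k = p>0$ is good and $G$ carries an $\F_q$-rational structure for which $\u$ is rational. This is a standard spreading-out of $(G,\u)$ over a finitely generated subring of $\k$ together with specialization to a closed point of positive good residue characteristic; the regular unipotent locus is open in the unipotent variety, so a regular unipotent element persists under such specialization, and dimension of a fiber-type variety like $\Y_{w,\u}$ is preserved.

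Once in positive characteristic, the key input is Kawanaka's formula $|\Y_{w,\u}^F|=q^{\ell(w)}$. Replacing the $\F_q$-structure by its $\F_{q^n}$-extension (equivalently, $F$ by $F^n$) yields $|\Y_{w,\u}^{F^n}|=q^{n\ell(w)}$ for every $n\geq 1$. The Grothendieck--Lefschetz trace formula, combined with Deligne's bounds on Frobenius eigenvalues, then pins down $\dim \Y_{w,\u}=\ell(w)$: the upper bound $\dim \Y_{w,\u}\leq \ell(w)$ is the elementary fact that $|X(\F_{q^n})|=O(q^{n\dim X})$ for any finite-type $\F_q$-scheme $X$, while the lower bound $\dim \Y_{w,\u}\geq \ell(w)$ comes from the exact leading term $q^{n\ell(w)}$, which forces the top compactly supported cohomology $H^{2\ell(w)}_c(\Y_{w,\u},\overline{\Q}_\ell)$ to be nonzero.

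Alternatively, the lower bound admits a purely geometric proof that bypasses the point count: $\Y_{w,\u}$ identifies with the intersection $\Gamma_\u\cap\O_w$ inside $\B\times\B$, where $\Gamma_\u$ denotes the graph of $\ad(\u)$. Since $\Gamma_\u$ is a closed smooth subvariety of dimension $\ell(w_0)$, $\O_w$ is locally closed of dimension $\ell(w_0)+\ell(w)$, and the ambient $\B\times\B$ is smooth of dimension $2\ell(w_0)$, every irreducible component of $\Y_{w,\u}$ has dimension at least $\ell(w_0)+(\ell(w_0)+\ell(w))-2\ell(w_0)=\ell(w)$ by the standard codimension estimate for intersections in a smooth variety. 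The only delicate step in the whole plan is the reduction to positive good characteristic with a rational regular unipotent element, which is routine; the rest is essentially an application of Kawanaka's explicit count and standard trace-formula consequences.
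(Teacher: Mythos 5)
Your proposal is correct and follows essentially the same route as the paper: reduce to positive good characteristic where $\u$ is rational, then invoke Kawanaka's count $|\Y_{w,\u}^F|=q^{\ell(w)}$. The paper treats the passage from the point count to the dimension as obvious, whereas you spell out the trace-formula/Deligne-bound details and add an optional geometric lower bound via the codimension inequality for $\Gamma_\u\cap\O_w$ in the smooth variety $\B\times\B$; these are welcome elaborations but not a different strategy.
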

\begin{proof} If $\ch \k \neq 0$, then by \cite[(7.1)]{kawanaka} we have $|(\Y_{w,\u})^F| = q^{\ell(w)}$ where $F$ is the geometric Frobenius corresponding to $\F_q$, thus the result is obvious. Otherwise, since all regular unipotent elements are conjugate to one another $\Y_{w,\u}$ is isomorphic for any $\u \in G$ regular unipotent. We choose $\u \in G$ such that reduction of $\Y_{w,\u}$ to $\cl{\F_p}$ for some $p$ is well-defined and has the same dimension as the original one. Thus it is also clear.
\end{proof}

However, we give an example that the answer of the question above is in general negative.
\begin{example} Consider $G=GL_3$ over $\k=\C$ (thus $W=S_3$) and let $B$ be the set of upper triangular invertible matrices and $T$ be the set of diagonal invertible matrices. Also we let
$$\u=\left (
  \begin{array} {ccc} 1 & 1 & 0 \\ 0 & 1 & 1 \\ 0 & 0 & 1 \\
  \end{array} \right).$$
Let $\B$ be the set of complete flags in $\C^3$. For any $\cF \in \B$, we choose $\vec{v}_1=(v_{11}, v_{12}, v_{13})$ and $\vec{v}_2=(v_{21}, v_{22}, v_{23})$ such that $\cF = [0 \subset \langle \vec{v}_1 \rangle \subset \langle \vec{v}_1, \vec{v}_2 \rangle \subset \C^3]$. We have a Plucker embedding $\B \hookrightarrow \P^2 \times \P^2$ which sends $\cF$ to $([v_{11}, v_{12}, v_{13}], [v_{11}v_{22}-v_{21}v_{12}, v_{11}v_{23}-v_{21}v_{13}, v_{12}v_{23}-v_{22}v_{13}])$. Note that this is independent of the choice of $\vec{v}_1, \vec{v}_2$.

Then for $w_0=s_1s_2s_1 \in W$, we may identify
$$\Y_{w_0, \u}=\{([a, b, c], [d, e, f]) \in \P^2 \times \P^2 \mid af-be+cd=0, bf-ce\neq 0\text{ or }cf\}.$$
Define
$$Z = \{([b, c], [e, f]) \in \P^1 \times \P^1 \mid bf-ce\neq 0 \text{ or } cf\}.$$
Then there is a morphism $\pi : \Y_{w_0, \u} \rightarrow Z : ([a, b, c], [d, e, f]) \mapsto ([b, c], [e, f]).$ Also this is an $\A^1$-torsor with the action $\A^1$ on $\Y_{w_0, \u}$ as follows.
$$t \in \A^1_\k, \quad t\cdot([a, b, c], [d, e, f]) =([a+ct, b, c], [d-ft, e, f])$$
Thus in particular, $\Pic(\Y_{w_0, \u}) = \Pic(Z)$. Now let $C, D \in \Pic(\P^1 \times \P^1)$ be the generators of $\Pic(\P^1 \times \P^1)$ which correspond to $[\P^1 \times \{*\}]$, $[\{*\} \times \P^1]$, respectively. Then the complement of $Z$ in $\P^1 \times \P^1$ is the union of two hyperplanes of the same class $C+D \in \Pic(\P^1 \times \P^1)$. Thus $\Pic(Z) \simeq \Z$. But it implies $\Y_{w_0, \u}$ cannot be an affine space.
\end{example}

However, there are some special cases where it is true. 
\begin{prop}\label{unip:homog} Suppose $w \in W$ is elliptic (i.e. has no eigenvalue 1 on the reflection representation of $W$)  and has a minimal length among its conjugates. Then $\Y_{w,\u}$ is isomorphic to $\A^{\ell(w)}$.
\end{prop}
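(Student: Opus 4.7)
The plan is to build an explicit isomorphism $\A^{\l(w)} \to \Y_{w,\u}$ using a carefully chosen reduced expression for $w$, exploiting both the minimal length and the elliptic hypotheses.

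First, I would conjugate $\u$ into $U$ and normalize it (via Steinberg) to a standard regular unipotent of the form $\prod_{\alpha\in S}u_\alpha(1)$. Writing $B' = gB$, the defining condition $B' \sim_w \act{\u}B'$ becomes $g^{-1}\u g \in B\dot{w}B$, so $\Y_{w,\u}$ is identified with a locally closed subvariety of $G/B$. I would then invoke the Geck--Pfeiffer/He theorem on minimal length elements: any elliptic $w$ of minimal length in its $W$-conjugacy class admits a so-called \emph{good} reduced expression $w = s_{i_1}\cdots s_{i_k}$, obtainable from any other minimal length conjugate by cyclic shifts preserving length. Using this expression, I would replace $\Y_{w,\u}$ by the equivalent Bott--Samelson-type model
\[
\widetilde{\Y} = \{(B_0, B_1, \ldots, B_k) \in \B^{k+1} \mid B_{j-1} \sim_{s_{i_j}} B_j \text{ for each } j,\ B_k = \act{\u}B_0\},
\]
the projection $(B_0,\ldots,B_k)\mapsto B_0$ identifying $\widetilde{\Y}$ with $\Y_{w,\u}$ since $w = s_{i_1}\cdots s_{i_k}$ is reduced.

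Next, I would parametrize each step $B_{j-1}\sim_{s_{i_j}} B_j$ by a coordinate $t_j$ in the root group $U_{\alpha_{i_j}}$ (conjugated appropriately), yielding a morphism $\A^k \to \widetilde{\Y}$. The central technical step is to show that the closing condition $B_k = \act{\u}B_0$ becomes, in these coordinates, a triangular system of polynomial equations: the $j$-th equation determines $t_j$ uniquely as a regular function of $t_{j+1},\ldots,t_k$ and of the parameters of $\u$. The minimal length condition is what keeps the intermediate gallery in the appropriate open Bruhat cells so that no lower-dimensional degenerations occur, while the elliptic condition — equivalent to $1-w$ being invertible on the reflection representation — is exactly the input needed for the linear part of the system to be invertible at each stage, rather than containing a free direction. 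Combined with the dimension count in Lemma~\ref{unip:dim}, this upgrades the set-theoretic bijection to an isomorphism of varieties $\Y_{w,\u} \cong \A^{\l(w)}$.

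The principal obstacle is verifying this triangular, invertible structure of the defining equations in a uniform way across all elliptic minimal length $w$; the interaction between the standard regular unipotent $\u$ (a product over simple roots) and successive conjugation by $\dot{s}_{i_j}$ along the good reduced expression must be tracked carefully. In particular, one must show that after going through all $k$ steps, the accumulated ``drift'' in the parameter space is governed by the operator $1-w$ on the reflection representation, at which point ellipticity guarantees a unique solution. A less careful argument would fail precisely when $w$ has a fixed vector on the reflection representation, in which case the corresponding coordinate direction would be unconstrained and $\Y_{w,\u}$ would acquire either an unwanted $\A^1$-factor or a non-reduced structure.
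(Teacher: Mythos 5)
Your sketch and the paper's proof diverge sharply. The paper's argument is short and leans on a deep external input: by \cite[0.3(a)]{lu:homogeneity}, the centralizer $Z_G(\u)$ acts transitively on $\Y_{w,\u}$ precisely when $w$ is elliptic of minimal length in its conjugacy class. Since the centre of $G$ acts trivially on $\B$, the unipotent part $Z_U(\u)$ (connected because $\ch\k$ is good, and abelian since $\u$ is regular) already acts transitively, so $\Y_{w,\u}$ is a homogeneous space for a connected abelian unipotent group, hence a quotient group, hence an affine space; Lemma~\ref{unip:dim} pins down the dimension. You are instead attempting to \emph{re-derive} this homogeneity from scratch via coordinates on a Bott--Samelson-type gallery.

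The gap is that the ``central technical step'' you flag is exactly the content of Lusztig's theorem, and as written your reduction to it does not close. You propose $k=\ell(w)$ coordinates $t_1,\dots,t_k$ for the gallery and then say the closing condition $B_k = \act{\u}B_0$ determines each $t_j$ from the later ones; if that were literally true you would be left with a single point, not $\A^{\ell(w)}$. What must actually happen is a careful bookkeeping between the $\ell(w_0)$ coordinates for $B_0$, the $k$ coordinates along the gallery, and the $\ell(w_0)$ closing equations, and showing that the net effect leaves exactly $k$ free directions parametrized polynomially with polynomial inverse. The assertion that ``ellipticity $\Leftrightarrow$ invertibility of $1-w$ on the reflection representation $\Rightarrow$ the linear part is invertible at each stage'' is a plausible heuristic for why the answer is what it is, but it is not a proof: the interaction between the chosen good reduced word, the root-group coordinates, and conjugation by $\u$ is nonlinear, and establishing the triangular/unipotent structure uniformly over all elliptic minimal-length $w$ is precisely what \cite{lu:homogeneity} does over a whole paper. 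In effect your proposal restates the hard part as an unproved claim. If you want a self-contained argument you would need to actually carry out that analysis; otherwise the clean route is to cite the homogeneity theorem as the paper does and then observe, as the paper does, that an orbit of a connected abelian unipotent group is an affine space.
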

\begin{proof} By \cite[0.3(a)]{lu:homogeneity}, the centralizer $Z_G(\u)$ of $\u\in G$ acts transitively on $\Y_{w,\u}$. Since the centre of $G$ acts trivially on $\B$, we see that the centralizer $Z_U(\u)$ in $U$, isomorphic (as a variety) to an affine space, acts transitively on $\Y_{w,\u}$. (Recall that we assume $\ch \k$ is good, thus $Z_U(\u)$ is connected.) Since $Z_U(\u)$ is abelian, $\Y_{w,\u}$ is isomorphic to a quotient group of $Z_U(\u)$ as a variety, which is also an affine space. Now the result follows from Lemma \ref{unip:dim}.
\end{proof}

In general, $\Y_{w,\u}$ is not quasi-affine. Furthermore, it is neither smooth, normal, nor even rationally smooth. We give an example for this pathology.
\begin{example} Let $G=GL_4$ over $\k=\C$, thus $W=S_4$. As before let $B$ be the set of upper triangular invertible matrices and $T$ be the set of diagonal matrices. Also we let
$$\u=\left(
\begin{array}{cccc}
 1 & 1 & 0 & 0 \\
 0 & 1 & 1 & 0 \\
 0 & 0 & 1 & 1 \\
 0 & 0 & 0 & 1 \\
\end{array}
\right).$$
We consider $\Y_{w,\u}$ where $w=s_2s_1s_3s_2$. For $\cF \in \B$, choose $\vec{v}_i = (v_{i1}, v_{i2}, v_{i3}, v_{i4})  \in \C^4$ for $i=1,2,3$ such that $\cF = [0 \subset \langle \vec{v}_1 \rangle \subset \langle \vec{v}_1, \vec{v}_2 \rangle \subset \langle \vec{v}_1, \vec{v}_2, \vec{v}_3 \rangle \subset \C^4]$. We have a Plucker embedding $\B \hookrightarrow \P^3 \times \P^5 \times \P^3$ which sends $\cF$ to
$$\left(
\begin{aligned}
&\left[(v_{11}, v_{12}, v_{13}, v_{14})\right], 
\\&\left[\det\begin{psmallmatrix}v_{11}&v_{12}\\v_{21}&v_{22}\end{psmallmatrix},
\det\begin{psmallmatrix}v_{11}&v_{13}\\v_{21}&v_{23}\end{psmallmatrix},
\det\begin{psmallmatrix}v_{11}&v_{14}\\v_{21}&v_{24}\end{psmallmatrix},
\det\begin{psmallmatrix}v_{12}&v_{13}\\v_{22}&v_{23}\end{psmallmatrix},
\det\begin{psmallmatrix}v_{12}&v_{14}\\v_{22}&v_{24}\end{psmallmatrix},
\det\begin{psmallmatrix}v_{13}&v_{14}\\v_{23}&v_{24}\end{psmallmatrix}\right],
\\&\left[\det\begin{psmallmatrix}v_{11}&v_{12}& v_{13}\\v_{21}&v_{22}&v_{23}\\v_{31}&v_{32}&v_{33}\end{psmallmatrix},
\det\begin{psmallmatrix}v_{11}&v_{12}& v_{14}\\v_{21}&v_{22}&v_{24}\\v_{31}&v_{32}&v_{34}\end{psmallmatrix},
\det\begin{psmallmatrix}v_{11}&v_{13}& v_{14}\\v_{21}&v_{23}&v_{24}\\v_{31}&v_{33}&v_{34}\end{psmallmatrix},
\det\begin{psmallmatrix}v_{12}&v_{13}& v_{14}\\v_{22}&v_{23}&v_{24}\\v_{32}&v_{33}&v_{34}\end{psmallmatrix}\right]
\end{aligned}
\right)
$$
which is independent of the choice of $\vec{v}_i$. By this embedding we identify
\begin{align*}
\B= \{&([a_1, a_2, a_3, a_4],[b_1, b_2, b_3, b_4, b_5, b_6],[c_1, c_2, c_3, c_4]) \in \P^3 \times \P^5 \times \P^3 \mid 
\\& b_3b_4 - b_2b_5 + b_1b_6=0,  a_3b_1 - a_2b_2 + a_1b_4=0,
 a_4b_1 - a_2b_3 + a_1b_5=0,
 \\& a_4b_2 - a_3b_3 + a_1b_6=0,
 a_4b_4 - a_3b_5 + a_2b_6=0,
 b_3c_1 - b_2c_2 + b_1c_3=0,
 \\& b_5c_1 - b_4c_2 + b_1c_4=0,
 b_6c_1 - b_4c_3 + b_2c_4=0,
 b_6c_2 - b_5c_3 + b_3c_4=0,
 \\& c_1a_4-c_2a_3+c_3a_2-c_4a_1=0\}.
\end{align*}
Then we have
\begin{align*}
\Y_{w,\u} = \{&([a_1, a_2, a_3, a_4],[b_1, b_2, b_3, b_4, b_5, b_6],[c_1, c_2, c_3, c_4]) \in \B \mid
\\& a_4 c_3 - a_3 c_4 + a_4 c_4=0, a_4 c_2 - a_3 c_3 + a_2 c_4=0,  b_5^2\neq b_3 b_6 + b_4 b_6\}.
\end{align*}
Now consider $f:\P^1 \rightarrow \Y_{w,\u}$ given by
$$f([s,t]) = ([0, t, s, s], [0,0,0,1,1,0],[-t,-t,0, s]).$$
Direct calculation shows that this is well-defined. Since $f$ is an embedding, $\P^1 \subset \Y_{w,\u}$. But it means that $\Y_{w,\u}$ is not quasi-affine.

On the other hand, we consider the normalization of $\Y_{w,\u}$, say $\pi: \hat{\Y}_{w,\u} \rightarrow \Y_{w,\u}$. Direct calculation shows that $\pi$ is not an isomorphism, thus $\Y_{w, \u}$ is not normal. Furthermore, $\hat{\Y}_{w,\u}$ is indeed smooth, thus $\pi$ is a resolution of singularity. (This is true when $\text{char } \k \neq 2$.) Also on $Z \subset \Y_{w,\u}$ defined by
$$Z = \{([a_1, a_2, a_3, a_4],[b_1, b_2, b_3, b_4, b_5, b_6],[c_1, c_2, c_3, c_4]) \in \Y_{w,\u} \mid a_3=a_4 = b_6=c_3=c_4=0\},$$
there exists a two-dimensional subvariety $Z' \subset Z$ such that $\pi^{-1}(Z') \rightarrow Z'$ is a 2-1 map. (Indeed $Z$ is the singular locus of $\Y_{w,\u}$.) Thus we see that the constant sheaf $\C_{\Y_{w,\u}}$ is not equal to $IC_{\Y_{w,\u}}$, which means that $\Y_{w,\u}$ is not rationally smooth, thus not smooth as well.
\end{example}

\subsection{Irreducibility of $\Y_{w,\u}$} Unlike $X(w)$ or $\Y_{w,\s}$ for $\s\in G$ regular semisimple, $\Y_{w,\u}$ is always irreducible, which we prove in this section. Let $\U$ be the set of unipotent elements in $G$, which is a closed subvariety of $G$. We have the following lemma.
\begin{lemma} For any $w \in W$, $\U \cap \cl{BwB}$ is irreducible of dimension $\ell(w) +\dim U$.
\end{lemma}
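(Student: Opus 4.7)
The plan is to reduce the statement to an irreducibility claim about the intersection of $\U$ with a single translated coset $\dot w B$, proceeding in three steps.

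\emph{Step 1: Reduction to the open stratum.} Since $\U$ is closed in $G$ and the Bruhat decomposition gives $\cl{BwB} = \bigsqcup_{v \leq w} BvB$, we have the stratification
$$\U \cap \cl{BwB} \;=\; \bigsqcup_{v \leq w}(\U \cap BvB).$$
By induction on $\ell(w)$, the smaller strata have dimension $\ell(v) + \dim U < \ell(w) + \dim U$, so $\U \cap BwB$ is open and dense in $\U \cap \cl{BwB}$. It therefore suffices to show that $\U \cap BwB$ is irreducible of dimension $\ell(w) + \dim U$.

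\emph{Step 2: Parametrization via conjugation.} Fix a lift $\dot w \in N(T)$ and set $U_w := U \cap \dot w U^- \dot w^{-1}$, which has dimension $\ell(w)$. The multiplication map $U_w \times B \to BwB$, $(u, b) \mapsto u \dot w b$, is an isomorphism. Conjugating by $u^{-1}$ shows that $u \dot w b$ is unipotent iff $\dot w \cdot bu$ is unipotent; since $b \mapsto bu$ is a bijection $B \xrightarrow{\sim} B$ for each fixed $u$, the substitution $b' = bu$ yields an isomorphism of varieties
$$\U \cap BwB \;\cong\; U_w \times (\U \cap \dot w B).$$
It thus suffices to show that $\U \cap \dot w B$ is irreducible of dimension $\dim U$.

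\emph{Step 3: Analysis of $\U \cap \dot w B$.} Here $\dot w B$ has dimension $\dim B = \dim U + \dim T$ and $\U$ has codimension $\dim T$ in $G$, so the expected intersection dimension is $\dim U$. To realize this, I would invoke the adjoint quotient $\chi \colon G \to T /\!/ W$, whose fiber over the identity class is precisely $\U$. Restricting to $\dot w B$ via the parametrization $T \times U \xrightarrow{\sim} \dot w B$, $(t, u) \mapsto \dot w t u$, gives a morphism $\chi|_{\dot w B} \colon \dot w B \to T /\!/ W$ whose source and target have dimensions $\dim B$ and $\dim T$ respectively. One shows that $\chi|_{\dot w B}$ is dominant and flat with geometrically irreducible fibers; the fiber over the identity class is then $\U \cap \dot w B$, irreducible of dimension $\dim U$.

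The main obstacle is the irreducibility of the unipotent fiber in Step 3: the dimension is relatively clean once flatness is in hand (a direct count using the parametrization above), but geometric irreducibility requires a more delicate argument. Two approaches suggest themselves: an explicit parametrization of $\U \cap \dot w B$ via a reduced expression for $w$ (in the spirit of a Bott--Samelson desingularization), or a degeneration/connectedness argument that transports irreducibility from a generic fiber of $\chi|_{\dot w B}$ (where the semisimple part is regular and the structure is tractable) to the unipotent fiber.
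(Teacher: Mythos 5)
Your Steps 1 and 2 are sound. Step 1 reduces to the open stratum (the density claim needs the intersection dimension bound to guarantee that no component of $\U \cap \cl{BwB}$ lies entirely inside $\bigsqcup_{v<w}\U\cap BvB$, but this is routine). Step 2 is a correct and pleasant reduction: the change of variables $b \mapsto bu$ exhibits $\U\cap BwB \cong U_w \times (\U\cap \dot w B)$, so irreducibility of $\U\cap \dot w B$ is all that is needed.

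The problem is that Step 3 is not a proof; it is an acknowledged gap, and it is precisely the hard part. Flatness of $\chi|_{\dot w B}$ is not established (and is not obvious), and even if one had flatness with an irreducible \emph{generic} fiber, that does not give irreducibility of the special fiber over the identity class: flat families can and do degenerate to reducible special fibers (e.g., $\A^2 \to \A^1$, $(x,y)\mapsto xy$). Neither of the two suggested routes (Bott--Samelson-style parametrization, or a degeneration argument from the regular semisimple locus) is carried out, so the central irreducibility claim remains unproven.

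The paper's argument is quite different and does not pass through $\U\cap\dot wB$ at all. Working over $\F_q$, it counts fixed points: by a result of Kawanaka, $|(\U\cap BwB)^F| = |(U^-\cap BwB)^F|\cdot|U^F|$, and $|(U^-\cap BwB)^F|$ is a coefficient in a product of Iwahori--Hecke basis elements, which after a further identity is recognized as the $R$-polynomial $R_{1,w}$, a monic polynomial of degree $\ell(w)$ in $q$. Hence $|(\U\cap\cl{BwB})^F|$ is monic of degree $\ell(w)+\dim U$. Combined with the lower bound $\dim(\text{each component}) \geq \ell(w)+\dim U$ from intersecting the irreducible varieties $\U$ and $\cl{BwB}$ in the smooth variety $G$, the leading coefficient $1$ forces a single top-dimensional component, giving irreducibility. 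If you want to complete your geometric route you would need an actual proof that $\U\cap\dot w B$ is irreducible; as it stands, the point-counting argument is the one that closes the gap.
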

\begin{proof} As before, it suffices to show for $G$ defined over $\F_q$ for $q$ some power of prime $p\neq 0$. Also we may assume $F$ acts trivially on $W$ and $B$ is rational. Then we have
\begin{align*}
|(\U \cap BwB)^F| &= |(U^- \cap BwB)^F||U^F| &\text{ \cite[Corollary 4.2.]{kawanaka}}
\\&=(T_{w}T_{w_0}:T_{w_0})|U^F| &\text{ \cite[Theorem 2.6.(b)]{kawanaka}}
\\&=(-q)^{\ell(w)}(T_{w^{-1}}^{-1}:1)|U^F| &\text{ \cite[Theorem 2.14.(c)]{kawanaka}}
\end{align*}
where $T_w \in \H_q(W)$ as in Definition \ref{def:hecke} and $(A:B)$ denotes the coefficient of $B$ in the expression of $A$. Note that $(-q)^{\ell(w)}(T_{w^{-1}}^{-1}:1)$ is equal to $R_{1, w}$, the R-polynomial in \cite[Section 2]{kl:hecke}. It is known that $R_{1,w}$ is a monic polynomial of degree $\ell(w)$, thus $|(\U \cap BwB)^F|$ is a monic polynomial of degree $\ell(w) +\dim U$.

On the other hand, $\U$ and $\cl{BwB}$ are both irreducible and their codimension in $G$ is $r$ and $\ell(w_0) - \ell(w)$, respectively, where $r$ is the rank of $G$. Thus each component of $\U \cap \cl{BwB}$ has codimension $\leq r + \ell(w_0) - \ell(w)$, or dimension $\geq  \ell(w)+\dim U$ because $G$ is smooth. But we also have $|(\U \cap \cl{BwB})^F|= \sum_{v \leq w} |(\U \cap BvB)^F|$ is a monic polynomial of degree $\ell(w) +\dim U$. From this we conclude that $\U \cap \cl{BwB}$ is irreducible of dimension $\ell(w) +\dim U$ as desired.
\end{proof}

Let $\cC$ be the set of regular unipotent elements in $G$, which is a single conjugacy class. Then clearly $\cC \cap BwB \subset \U \cap \cl{BwB}$, thus $\cl{\cC \cap BwB} \subset \U \cap \cl{BwB}$. Now fix $\u \in \cC \cap B$ and consider the following diagram.
$$G/B \xleftarrow{\quad\pi\quad} G \xrightarrow{\rho: g \mapsto \act{g}\u}\cC.$$
Then $\pi$ and $\rho$ are fiber bundles and we have $\pi^{-1}(\Y_{w,\u}) = \rho^{-1}(\cC \cap BwB)$. Thus in particular $\cC \cap BwB$ is nonempty and of dimension $\ell(w) + \dim B - \dim r = \ell(w) + \dim U$. In other words, $\cC \cap BwB$ is dense in $\U \cap \cl{BwB}$, which we state as follows.
\begin{lemma}$\cl{\cC \cap BwB} = \U \cap \cl{BwB}$.
\end{lemma}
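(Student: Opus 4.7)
The plan is to exploit the previous lemma's irreducibility statement and upgrade the containment $\cl{\cC \cap BwB} \subset \U \cap \cl{BwB}$ to equality by matching dimensions. The inclusion itself is cheap: every regular unipotent element is unipotent, so $\cC \cap BwB \subset \U \cap \cl{BwB}$, and then taking closures gives $\cl{\cC \cap BwB} \subset \U \cap \cl{BwB}$. The previous lemma tells us the right-hand side is irreducible of dimension $\ell(w) + \dim U$, so equality reduces to showing $\dim(\cC \cap BwB) = \ell(w) + \dim U$.

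For the dimension, I would reuse the diagram $G/B \xleftarrow{\pi} G \xrightarrow{\rho} \cC$ already introduced right before the statement. The map $\pi$ is a Zariski-locally trivial $B$-bundle, so by Lemma \ref{unip:dim}
$$\dim \pi^{-1}(\Y_{w,\u}) = \dim \Y_{w,\u} + \dim B = \ell(w) + \dim B.$$
In good characteristic the centralizer $Z_G(\u)$ of a regular unipotent element is connected of dimension equal to the rank $r$ of $G$, so $\rho$ is a $Z_G(\u)$-torsor and $\dim \rho^{-1}(\cC \cap BwB) = \dim(\cC \cap BwB) + r$. Since $\pi^{-1}(\Y_{w,\u}) = \rho^{-1}(\cC \cap BwB)$, equating the two expressions forces
$$\dim(\cC \cap BwB) = \ell(w) + \dim B - r = \ell(w) + \dim U.$$

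Combining the two steps, $\cl{\cC \cap BwB}$ is a closed subvariety of the irreducible variety $\U \cap \cl{BwB}$ of matching dimension, hence the two agree. There is no substantive obstacle here: the only ingredient beyond what is already in the excerpt is the identity $\dim Z_G(\u) = r$, which is standard for a regular unipotent element in good characteristic. All the real work has already been done in the previous lemma, whose proof used the Kazhdan-Lusztig $R$-polynomial to pin down the dimension of $\U \cap \cl{BwB}$.
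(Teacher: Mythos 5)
Your proof is correct and it is essentially the same argument the paper uses: the cheap inclusion, the dimension count via the correspondence $G/B \xleftarrow{\pi} G \xrightarrow{\rho} \cC$ with $\pi^{-1}(\Y_{w,\u}) = \rho^{-1}(\cC \cap BwB)$, the fact that $\dim Z_G(\u) = r$ for a regular element, and then matching against the irreducibility/dimension conclusion of the preceding lemma. The paper packages this reasoning in the running text leading into the lemma rather than a formal proof environment, but the substance is identical to what you wrote.
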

In particular, $\cC \cap BwB$ is irreducible, and its closure in $\cC$ is the same as $\cC \cap \cl{BwB} = \bigcup_{v \leq w} \cC \cap \cl{BvB}$. As $\pi$ and $\rho$ are fiber bundles, we have $\pi^{-1}(\cl{\Y_{w,\u}}) = \rho^{-1}(\cl{\cC \cap BwB})$ which means the following.
\begin{prop}\label{unip:irred} For $ w\in W$, $\Y_{w,\u}$ is irreducible. Its closure is $\cl{\Y_{w,\u}} = \bigcup_{v \leq w} \Y_{v, \u}$.
\end{prop}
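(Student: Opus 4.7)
The plan is to read off both claims from the fiber-bundle diagram
$$\B \xleftarrow{\pi} G \xrightarrow{\rho} \cC, \qquad \pi^{-1}(\Y_{w,\u}) = \rho^{-1}(\cC \cap BwB)$$
set up just before the proposition, together with the two preceding lemmas: that $\U \cap \cl{BwB}$ is irreducible of dimension $\l(w) + \dim U$, and that $\cl{\cC \cap BwB} = \U \cap \cl{BwB}$. These two lemmas already yield (as noted in the text) that $\cC \cap BwB$ is an irreducible dense open inside $\U \cap \cl{BwB}$, which is the only genuine input needed.

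For irreducibility, I would first argue that $\rho^{-1}(\cC \cap BwB)$ is irreducible. Since $\rho$ is a fiber bundle (with fiber conjugate to the centralizer $Z_G(\u)$, which is connected in good characteristic), it is flat with irreducible fibers, so its preimage of any irreducible subvariety remains irreducible. By the displayed identity this irreducible variety equals $\pi^{-1}(\Y_{w,\u})$. Now $\pi$ is a $B$-bundle, hence surjective with irreducible fiber, and therefore any decomposition $\Y_{w,\u} = Y_1 \cup Y_2$ into closed subsets would pull back to a decomposition $\pi^{-1}(Y_1) \cup \pi^{-1}(Y_2)$ of the irreducible variety $\pi^{-1}(\Y_{w,\u})$. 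This forces $Y_1 = \Y_{w,\u}$ or $Y_2 = \Y_{w,\u}$, giving irreducibility of $\Y_{w,\u}$.

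For the closure identity, I would use that $\pi$ and $\rho$ are flat and surjective, so preimages commute with closures. Applying $\pi^{-1}$ to $\cl{\Y_{w,\u}}$ yields
$$\pi^{-1}(\cl{\Y_{w,\u}}) = \cl{\pi^{-1}(\Y_{w,\u})} = \cl{\rho^{-1}(\cC \cap BwB)} = \rho^{-1}(\cl{\cC \cap BwB}) = \rho^{-1}(\cC \cap \cl{BwB}),$$
where in the last step I invoke the preceding lemma together with $\cC \cap \cl{BwB} \subset \U \cap \cl{BwB} = \cl{\cC \cap BwB}$ (and the reverse inclusion is immediate). Using the Bruhat decomposition $\cl{BwB} = \bigsqcup_{v \leq w} BvB$, the right-hand side becomes $\bigcup_{v \leq w} \rho^{-1}(\cC \cap BvB) = \bigcup_{v \leq w} \pi^{-1}(\Y_{v,\u})$. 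Since $\pi$ is surjective, applying $\pi$ back gives $\cl{\Y_{w,\u}} = \bigcup_{v \leq w} \Y_{v,\u}$.

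There is no real obstacle here; the proposition is essentially a corollary of the two preceding lemmas once one has the diagram $\B \leftarrow G \rightarrow \cC$. The only point demanding care is the interchange of closure with preimage under $\pi$ and $\rho$, which is legitimate because both are flat surjective morphisms with connected fibers.
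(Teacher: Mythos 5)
Your proof is correct and takes essentially the same route as the paper: both deduce the proposition from the irreducibility lemma for $\U \cap \cl{BwB}$ via the span $\B \xleftarrow{\pi} G \xrightarrow{\rho} \cC$, transporting irreducibility and closure along the two fiber bundles exactly as the paper's terse final sentence indicates. One small caveat worth noting (present in the paper as well) is that the connectedness of $Z_G(\u)$ really requires $Z(G)$ connected; this is harmless since one may replace $G$ by its adjoint quotient without changing $\B$, $W$, or $\Y_{w,\u}$.
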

\subsection{$[\cl{\Y_{w,\u}}]$ in terms of Schubert classes}
We prove that $[\cl{\Y_{w,\u}}]$ is indeed the same as the class of an irreducible component of $\cl{\Y_{w,\s}}$. First we prove this when $\supp(w) = S$, i.e. $\Y_{w,\u}$ and $\Y_{w,\s}$ are both irreducible.
\begin{prop}\label{unip:trans} Let $\u \in G$ be a regular unipotent element, and $\Gamma_\u$ be the graph of $\ad(\u) : \B \rightarrow \B$. Then for $w\in W$ satisfying $\supp(w) = S$, the intersection of $\Gamma_\u$ and $\cl{\O(w)}$ is generically transversal.
\end{prop}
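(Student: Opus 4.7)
The plan is to reduce generic transversality at a point of $\Gamma_\u \cap \cl{\O_w}$ to a concrete Lie-algebraic condition on the centralizer of $\u$ in $\g$, and then to exhibit one point of $\Y_{w,\u}$ at which the condition holds. Since $\Y_{w,\u}$ is irreducible of dimension $\ell(w)$ by Proposition \ref{unip:irred}, and transversality is an open condition along the intersection, exhibiting a single such point will force transversality on a dense open subset, which is exactly what the proposition asserts.

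First, at a point $p = (B', \act{\u}B') \in \Gamma_\u \cap \O_w$, write $\b'' = \textup{Ad}(\u)\b'$ and identify $T_p(\B \times \B) \cong \g/\b' \oplus \g/\b''$. The tangent space of $\O_w$ at $p$ is the image of the diagonal map $\g \to \g/\b' \oplus \g/\b''$, $x \mapsto (\overline x, \overline x)$, exactly as in the tangent-space computation in the proof of Proposition \ref{orbitdecomp}, while the tangent space of $\Gamma_\u$ is the graph $\{(\overline y, \overline{\textup{Ad}(\u) y}) : y \in \g\}$. A straightforward computation matching an arbitrary target $(\overline{x_1}, \overline{x_2})$ against a sum of vectors in these two subspaces shows that $T_p\Gamma_\u + T_p\O_w = T_p(\B \times \B)$ if and only if
\[
(\textup{Ad}(\u) - 1)(\g) + \b' + \b'' = \g.
\]

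Second, I dualize using an $\textup{Ad}$-invariant nondegenerate bilinear form on $\g$ (in good characteristic the Killing form works on the semisimple part, with the center handled separately). Under such a form, $\b'^\perp$ is the nilradical $\n'$ of $\b'$, and the adjoint of $\textup{Ad}(\u) - 1$ is $\textup{Ad}(\u^{-1}) - 1$, whose kernel is again the centralizer $\mathfrak{z}_\g(\u)$ of $\u$ in $\g$. Hence the transversality condition becomes
\[
\mathfrak{z}_\g(\u) \cap \n' \cap \n'' = 0.
\]
Since $\u$ is regular unipotent and therefore lies in a unique Borel $B^+$, one has $\mathfrak{z}_\g(\u) \subseteq \textup{Lie}(Z(G)) + \mathfrak{n}^+$, and the semisimple central part cannot sit inside the nilpotent $\n'$; so the condition reduces to $(\mathfrak{z}_\g(\u) \cap \mathfrak{n}^+) \cap \n' \cap \textup{Ad}(\u)\n' = 0$.

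Third, I would exhibit such a $B'$ by working with the preimage $\pi^{-1}(\Y_{w,\u}) = \{g \in G : g^{-1}\u g \in \cC \cap BwB\}$ from the discussion before Proposition \ref{unip:irred}: pick $g^{-1}\u g$ in the open dense stratum of the irreducible variety $\cC \cap BwB$ and verify that $\n' = \textup{Ad}(g)\mathfrak{n}^+$ meets $\mathfrak{z}_\g(\u) \cap \mathfrak{n}^+$ only at zero. The hypothesis $\supp(w) = S$ is essential here: if $w$ lay in a proper $W_I$, every $B' \in \Y_{w,\u}$ would force $B'$ and $\act{\u}B'$ into a common proper parabolic whose nilradical could automatically overlap $\mathfrak{z}_\g(\u) \cap \mathfrak{n}^+$ nontrivially. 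The main obstacle is precisely this explicit construction; a fallback is a dimension count on the bad locus $\{B' \in \Y_{w,\u} : \mathfrak{z}_\g(\u) \cap \n' \cap \textup{Ad}(\u)\n' \neq 0\}$, bounding, for each nonzero $Z \in \mathfrak{z}_\g(\u) \cap \mathfrak{n}^+$, the codimension of $\{B' \in \B : Z \in \n'\}$ in $\B$, and using $\supp(w) = S$ together with irreducibility of $\Y_{w,\u}$ to rule out that $\Y_{w,\u}$ is contained in any single Schubert stratum on which $Z$ is uniformly forced into $\n'$.
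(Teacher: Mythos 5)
Your first two steps are correct and match the paper's argument. The transversality condition you derive, namely $(\textup{Ad}(\u)-1)(\g)+\b'+\b''=\g$, is equivalent to the paper's $\textup{im}(\ad(\u^{-1})-id)+\b'=\g$ (note $\act{\u}\b'\subset \textup{im}(\ad(\u)-1)+\b'$, and $\textup{im}(\ad(\u)-1)=\textup{im}(\ad(\u^{-1})-1)$ since $\ad(\u)$ is invertible). Likewise your dual condition $\mathfrak{z}_\g(\u)\cap\n'\cap\n''=0$ collapses to the paper's $\mathfrak{z}_\g(\u)\cap\n'=0$, since any $x\in\mathfrak{z}_\g(\u)\cap\n'$ automatically lies in $\act{\u}\n'=\n''$.

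The gap is in your third step, and you flag it yourself: you propose to "exhibit such a $B'$" or to fall back on an unspecified dimension count, but neither is carried out, and it is not clear either would go through as stated. The idea you are missing is the reduction to a Coxeter element. The paper first treats $w$ a Coxeter element (which is elliptic of minimal length in its conjugacy class), where by \cite[0.3(a)]{lu:homogeneity} the centralizer $Z_U(\u)$, a connected abelian unipotent group of dimension equal to the rank of $G$, acts transitively on $\Y_{w,\u}$; since $\dim\Y_{w,\u}=\ell(w)$ equals that rank, the stabilizer $B'\cap Z_U(\u)$ must be a zero-dimensional affine space, hence trivial, which forces $\mathfrak{z}_\g(\u)\cap\n'=0$ at every point of $\Gamma_\u\cap\O_w$. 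For a general $w$ with $\supp(w)=S$ one then picks a Coxeter element $w'\leq w$, so that $\Gamma_\u\cap\O(w')$ is a nonempty subset of $\Gamma_\u\cap\cl{\O(w)}$ on which the open condition $\mathfrak{z}_\g(\u)\cap\n'=0$ holds, and irreducibility of $\Gamma_\u\cap\cl{\O(w)}\simeq\cl{\Y_{w,\u}}$ (Proposition~\ref{unip:irred}) then propagates it to a dense open subset. This transitivity/homogeneity input is what makes the "single good point" strategy actually succeed, and your proposal does not supply a substitute for it. Also, a minor point: the intuition you offer for why $\supp(w)=S$ matters points in roughly the right direction, but the real role of the hypothesis is that it guarantees the existence of a Coxeter element $w'\leq w$; without it, $\Y_{w,\u}$ lies inside the flag variety of a proper Levi and the relevant centralizer no longer acts with trivial stabilizer.
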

As a corollary we have
\begin{cor}\label{unip:ssequiv} If $\supp(w) = S$, then 
$$[\cl{\Y_{w,\u}}] =[\cl{\Y_{w,\s}}]=\sum_{\substack{u, v \in W \\ uwv^{-1}=w_0\\\ell(u)+\ell(v)=\ell(w_0)-\ell(w)}}[\cl{C_{w_0u}}] \cdot [\cl{C_{w_0v}}].$$
\end{cor}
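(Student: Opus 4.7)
The plan is to reuse the Gysin pullback computation from the regular semisimple section almost verbatim, substituting $\u$ for $\s$ and using Proposition \ref{unip:trans} to supply the needed transversality. The second equality is precisely the earlier theorem computing $[\cl{\Y_{w,\s}}]$ in terms of Schubert classes, so only the first equality $[\cl{\Y_{w,\u}}] = [\cl{\Y_{w,\s}}]$ requires attention.

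First I would identify $\Y_{w,\u}$ with $\Gamma_\u \cap \O_w$ via the closed embedding $(id, \ad(\u)) : \B \hookrightarrow \B \times \B$, and pass to closures. Under the hypothesis $\supp(w) = S$, Proposition \ref{unip:trans} asserts that $\Gamma_\u \cap \cl{\O_w}$ is generically transversal, so the standard Gysin formalism of \cite[Chapter 6]{fulton} yields
$$[\cl{\Y_{w,\u}}] = (id, \ad(\u))^*[\cl{\O_w}]$$
in $A_*(\B)$. This is the same manipulation that produced $[\cl{\Y_{w,\s}}] = (id, \ad(\s))^*[\cl{\O_w}]$ in the semisimple section.

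Second, because $G$ is connected and $\ad(\u)$ is an inner automorphism, it acts trivially on $A_*(\B)$, so $(id, \ad(\u))^*$ and $(id, id)^*$ agree on $A_*(\B \times \B)$. The pullback $(id, id)^*[\cl{\O_w}]$ was already evaluated in the regular semisimple section by combining Proposition \ref{orbitdecomp} with the same triviality observation, yielding the displayed sum of products of Schubert classes. Both equalities in the corollary therefore follow at once.

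The genuine obstacle here is Proposition \ref{unip:trans} itself rather than the corollary: proving generic transversality of $\Gamma_\u$ against $\cl{\O_w}$ is substantially more delicate than for $\Gamma_\s$, since the fixed locus of $\ad(\u)$ on $\B$ is not a discrete set, and the hypothesis $\supp(w) = S$ enters the argument only through this proposition. Once that transversality input is granted, the corollary is a formal consequence of the intersection-theoretic framework already set up in the semisimple case.
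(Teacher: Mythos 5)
Your proposal is correct and follows the paper's proof exactly: invoke Proposition \ref{unip:trans} for generic transversality to identify $[\cl{\Y_{w,\u}}]$ with the Gysin pullback $(id,\ad(\u))^*[\cl{\O_w}]$, then use that $\ad(\u)$ acts trivially on $A_*(\B)$ to reduce to the computation already done in the regular semisimple case.
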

\begin{proof} By Proposition \ref{unip:trans}, $[\cl{Y_{w,n}}] = (id, ad(\u))^*[\cl{\O(w)}]$. Since $ad(\u) : \B \rightarrow \B$ is an automorphism which acts trivially on $A_*(\B)$, we get the result.
\end{proof}
\begin{proof}[Proof of \ref{unip:trans}] (Assuming non-degeneracy of the Killing form on $\g$.) Let $(B', \act{\u}B') \in \Gamma_\u\cap \O(w)$. Then the tangent space of $\B \times \B$ at $(B', \act{\u}B')$ is isomorphic to $\g/\b' \times \g/\act{\u}\b'$ where $\g,\b'$ are the Lie algebras of $G$, $B'$, respectively. Let $\pi_1: \g \rightarrow \g/\b'$ and $\pi_2: \g \rightarrow \g/\act{\u}\b'$. The tangent space of $\Gamma_\u$ at $(B', \act{\u}B')$ is $\{(\pi_1(\vec{v}), \pi_2(\act{\u}\vec{v})) \in \g/\b' \times \g/\act{\u}\b' \mid \vec{v} \in \g\}$. Likewise, the tangent space of $\O(w)$ is $\{(\pi_1(\vec{v}'), \pi_2(\vec{v}')) \in \g/\b' \times \g/\act{\u}\b' \mid \vec{v}' \in \g\}$. Thus if they meet at $(B', \act{\u}B')$ transversally, then for any $\vec{v}_1, \vec{v}_2 \in \g$ there exists $\vec{v}, \vec{v}' \in \g$ such that the following holds.
$$\vec{v} + \vec{v}' = \vec{v}_1 \mod \b', \qquad \vec{v} + \act{\u}\vec{v}' = \vec{v}_2 \mod \act{\u}\b'$$
Or equivalently we need to find $\vec{v} \in \g$ such that
$$\vec{v} - \act{\u^{-1}}\vec{v} = \vec{v_1}- \act{\u^{-1}}\vec{v_2} \mod \b'$$
Thus we see that the transversality condition is equivalent to
$$\textup{im}(\ad(\u^{-1}) - id) + \b' = \g.$$

Let $\vec{x} \in \g$ be orthogonal to $\textup{im}(\ad(\u^{-1}) - id) + \b'$ with respect to the Killing form on $\g$. Then for any $\vec{v} \in \g$ we have
$$(\act{\u^{-1}}\vec{v}-\vec{v} , \vec{x}) = 0 \Leftrightarrow (\vec{v}, \vec{x}) = (\vec{v}, \act{\u}\vec{x}) \Leftrightarrow \act{\u}\vec{x}=\vec{x}.$$
Here we use non-degeneracy of the Killing form. Also $(\b', x) =0$ is equivalent to $x \in \n'$, the nilpotent radical of $\b'$. Thus $\textup{im}(\ad(\u^{-1}) - id) + \b' = \g$ if and only if $\n' \cap Z_\g(\u) = 0$ where $Z_\g(\u)$ is the set of elements in $\g$ fixed by $\ad(\u)$.

Let $U'$ be the unipotent radical of $B'$ and $Z_G(\u)$ be the centralizer of $\u$ in $G$. Since the condition above is equivalent to that $U' \cap Z_G(\u)$ is finite, clearly true if $B' \cap Z_U(\u) = \{id\}$ where $Z_U(\u)$ is the centralizer of $\u$ in $U$. Now suppose $w \in W$ is a Coxeter element. Then it is clearly elliptic and $\ell(w)$ is minimal among its conjugates, which is the same as the dimension of $Z_U(\u)$. Then the stabilizer of $B'$ by $Z_U(\u)$, or $B' \cap Z_U(\u)$, is isomorphic to an affine space of dimension 0, thus trivial. Thus in this case $\Gamma_\u$ and $\O(w)$ intersect transversally, which means $\Gamma_\u$ and $\cl{\O(w)}$ intersect generically transversally.

For general $w\in W$ with $\supp(w)=S$, there exists $w' \leq w$ where $w'$ is a Coxeter element. Then $\Gamma_\u \cap \O(w') \subset \Gamma_\u \cap \cl{\O(w)}$ and $\Gamma_\u \cap \cl{\O(w)}$ is irreducible by Proposition \ref{unip:irred}. Since $B' \cap Z_U(\u)=\{id\}$ is an open condition satisfied on $\Gamma_\u \cap \O(w')$, this is generically true on $\Gamma_\u \cap \cl{\O(w)}$. But this implies the desired statement by argument above.
\end{proof}

\begin{rmk} The first proof of \cite[Corollary 5.6]{lu:weyltounip} for a Coxeter element $w\in W$ and any $g\in G$ does not require that the Killing form on $\g$ is nondegenerate. As we only used nondegenracy for the proof in the case of a Coxeter element, Proposition \ref{unip:trans} is still true without this assumption.
\end{rmk}

\begin{lemma} Suppose $I = \supp(w)$. Let $P_I$ be the parabolic subgroup corresponding to $I$ which contains $B$. Let $L_I$ be the Levi subgroup of $P_I$ which contains $T$ and $U_I$ be the unipotent radical of $P_I$. Let $\u_I \in L_I \cap U$ be a regular unipotent element in $L_I$, and $\u' \in U_I$ such that $\u = \u'\u_I \in U$ is regular unipotent in $G$. (This is always possible.) Let $\rho : L_I / B_I \hookrightarrow G/B$ be the natural closed embedding. Then $\rho(\cl{\Y_{L_I,w, \u_I}}) = \cl{\Y_{w,\u}}$ where $\Y_{L_I, w, \u_I}$ is defined using $L_I, w\in W_I, \u_I \in L_I$.
\end{lemma}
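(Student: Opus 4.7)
The plan is first to verify pointwise that $\rho(\Y_{L_I,w,\u_I}) \subset \Y_{w,\u}$, and then deduce equality of closures from a dimension count of irreducibles.

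The key group-theoretic step is to show that $\act{\u}(B'_I U_I) = (\act{\u_I}B'_I)U_I$ for every Borel subgroup $B'_I$ of $L_I$. Writing $B'_I = \act{l}B_I$ with $l \in L_I$, the fact that $l$ normalizes $U_I$ gives $B'_I U_I = \act{l}B$. Since $\u_I \in L_I$ also normalizes $U_I$, one gets $\act{\u_I}(B'_I U_I) = (\act{\u_I}B'_I)U_I$ immediately. For $\u' \in U_I$, normality of $U_I$ in $P_I$ shows $l^{-1}\u' l \in U_I \subset B$, so $\u' l \in lB$ and hence $\act{\u'}(B'_I U_I) = \act{\u' l}B = \act{l}B = B'_I U_I$. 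Composing the two gives the desired identity.

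To transfer the Bruhat position, if $B'_I \sim_w \act{\u_I}B'_I$ in $L_I$, then choosing $l \in L_I$ and a representative $\dot w \in N_{L_I}(T) \subset N_G(T)$ with $B'_I = \act{l}B_I$ and $\act{\u_I}B'_I = \act{l\dot w}B_I$, we obtain $B'_I U_I = \act{l}B$ and $(\act{\u_I}B'_I) U_I = \act{l\dot w}B$, so $\rho(B'_I) \sim_w \act{\u}\rho(B'_I)$ in $G$ under the inclusion $W_I \hookrightarrow W$. Thus $\rho(\Y_{L_I,w,\u_I}) \subset \Y_{w,\u}$.

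Since $\rho$ is a closed embedding, $\rho(\cl{\Y_{L_I,w,\u_I}}) = \cl{\rho(\Y_{L_I,w,\u_I})} \subset \cl{\Y_{w,\u}}$. Applying Proposition \ref{unip:irred} to both $G$ and $L_I$ gives irreducibility of both closures, and Lemma \ref{unip:dim} yields $\dim \Y_{L_I,w,\u_I} = \ell(w) = \dim \Y_{w,\u}$, using that the length of $w$ in the parabolic subgroup $W_I$ agrees with its length in $W$. A closed embedding preserves dimension, so $\rho(\cl{\Y_{L_I,w,\u_I}})$ is an irreducible closed subvariety of $\cl{\Y_{w,\u}}$ of the same dimension, forcing equality. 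The main obstacle is the group-theoretic step verifying that $U_I$ acts trivially on the image $\rho(L_I/B_I) \subset G/B$; the remainder is a formal comparison of irreducible closed subvarieties of equal dimension.
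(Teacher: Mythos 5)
Your proposal is correct and takes essentially the same route as the paper: verify pointwise that $\rho$ carries $\Y_{L_I,w,\u_I}$ into $\Y_{w,\u}$ by a Bruhat-position computation, then conclude equality of closures from the fact that both are irreducible (by Proposition~\ref{unip:irred}) of dimension $\ell(w)$ (by Lemma~\ref{unip:dim}) and $\rho$ is a closed embedding. The only difference is cosmetic: you package the group-theoretic step as the clean observation that $U_I$ acts trivially on the image $\rho(L_I/B_I)$, while the paper reaches the same conclusion by directly rewriting $\act{\u}\rho(B'_I) = \act{\u' l\dot w}B$ and noting $l^{-1}\u' l \dot w \in U_I \dot w \subset B\dot w B$; these are the same calculation.
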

\begin{proof} Suppose $B'_I \in \Y_{L_I, w, \u_I}$, i.e. $B'_I \sim_w \act{\u_I}B'_I$. Then there exists $l \in L_I$ such that $B'_I = \act{l}B_I$ and $\act{\u_I}B'_I = \act{lw}B_I$. Now 
$$\rho(B'_I) = B'_IU_I = \act{l}B_IU_I = \act{l}(B_IU_I) = \act{l}B$$
and also
$$\act{\u}(\rho(B'_I)) = \act{\u}(B'_IU_I)=\act{\u'\u_I}(B'_IU_I) = \act{\u'}(\act{\u_I}B'_IU_I) = \act{\u'}(\act{lw}B_IU_I) =  \act{\u'}\act{lw}B.$$
Since $l \in L_I$ normalizes $U_I$, $l^{-1}\u'lw \in BwB$, which means $\rho(B'_I) \sim_{w}\act{\u}(\rho(B'_I))$. Therefore $\rho(\Y_{L_I, w, \u_I}) \subset \Y_{w,\u}$. Since $\rho$ is a closed embedding and $\Y_{L_I, w, \u_I}$ and $\Y_{w,\u}$ are both irreducible of the same dimension, we get the result.
\end{proof}

In particular, we have $\rho_*[\cl{ \Y_{L_I, w, \u_I}}] = [\cl{\Y_{w,\u}}]$. But Lemma \ref{ss:push} for $u_j = id$ shows that for regular semisimple $\s \in T$, $\rho(\cl{ \Y_{L_I, w, \s}})$ is the same as an irreducible component of $\cl{ \Y_{w,\s}}$. Thus $\rho_*[\cl{ \Y_{L_I, w, \s}}]$ is equal to the class of any irreducible component of $[\cl{ \Y_{w,\s}}]$ by Theorem \ref{ss:same}. Since $[\cl{ \Y_{L_I, w, \u_I}}] = [\cl{ \Y_{L_I, w, \s}}]$ by Corollary \ref{unip:ssequiv}, we have the following.
\begin{thm} \label{unip:class} For $I  =\supp(w)$ we have 
$$[\cl{\Y_{w,\u}}] = \frac{|W_I|}{|W|}[\cl{ \Y_{w,\s}}] = \frac{|W_I|}{|W|}\sum_{\substack{u, v \in W \\ uwv^{-1}=w_0\\\ell(u)+\ell(v)=\ell(w_0)-\ell(w)}}[\cl{C_{w_0u}}] \cdot [\cl{C_{w_0v}}].$$
\end{thm}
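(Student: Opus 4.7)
The plan is to assemble this statement by combining the preceding lemma with the regular semisimple results, passing through the Levi subgroup $L_I$. First I would apply the proper pushforward $\rho_*$ to the identity $\rho(\cl{\Y_{L_I, w, \u_I}}) = \cl{\Y_{w, \u}}$ from the lemma immediately above. Because $\rho$ is a closed embedding and both sides are irreducible of the same dimension $\ell(w)$ (using Lemma \ref{unip:dim} for both $G$ and $L_I$), the map on the underlying cycle is an isomorphism of degree one, so $\rho_*[\cl{\Y_{L_I, w, \u_I}}] = [\cl{\Y_{w,\u}}]$.

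Next I would handle the regular semisimple side. Fix a regular semisimple $\s \in T$. Lemma \ref{ss:push} (applied to the coset representative $u_j = \mathrm{id}$) shows that $\rho(\cl{\Y_{L_I, w, \s}})$ is exactly one of the irreducible components of $\cl{\Y_{w, \s}}$. By Theorem \ref{ss:comp} the total number of such components is $|W/W_I| = |W|/|W_I|$, and Theorem \ref{ss:same} asserts that all of them have the same class in $A_*(\B)$. Consequently
\begin{equation*}
\rho_*[\cl{\Y_{L_I, w, \s}}] \;=\; \frac{|W_I|}{|W|}\,[\cl{\Y_{w, \s}}].
\end{equation*}

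Now I would compare the unipotent and semisimple classes on the Levi. Inside $L_I$ the element $w$ has full support, i.e.\ $\supp_{W_I}(w) = I$ equals the full set of simple reflections of $W_I$. Corollary \ref{unip:ssequiv}, applied to $L_I$ in place of $G$, therefore gives $[\cl{\Y_{L_I, w, \u_I}}] = [\cl{\Y_{L_I, w, \s}}]$ in $A_*(L_I/B_I)$. Taking $\rho_*$ of both sides and chaining the three equalities yields
\begin{equation*}
[\cl{\Y_{w, \u}}] \;=\; \rho_*[\cl{\Y_{L_I, w, \u_I}}] \;=\; \rho_*[\cl{\Y_{L_I, w, \s}}] \;=\; \frac{|W_I|}{|W|}[\cl{\Y_{w, \s}}],
\end{equation*}
and the second equality in the theorem follows immediately by substituting the Schubert class expression for $[\cl{\Y_{w,\s}}]$ established earlier in Section 4.3.

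The main obstacle I anticipate is the first displayed equation: one must verify that the component $\rho(\cl{\Y_{L_I, w, \s}})$ really appears with multiplicity one in the cycle of $\cl{\Y_{w, \s}}$, so that the pushforward produces exactly the class of a single component rather than some multiple of it. This is a consequence of the injectivity of $\rho$ and the fact that $\cl{\Y_{w, \s}}$ is generically reduced of pure dimension $\ell(w)$ (the closure of a smooth variety), so its fundamental class is simply the sum of the classes of its irreducible components with coefficient one. Once this bookkeeping is pinned down, the rest is a purely formal assembly.
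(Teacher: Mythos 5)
Your proof matches the paper's argument essentially line for line: pushforward along the closed embedding $\rho$ to reduce to the Levi $L_I$, Lemma~\ref{ss:push} with $u_j = \mathrm{id}$ to identify $\rho(\cl{\Y_{L_I,w,\s}})$ as one component of $\cl{\Y_{w,\s}}$, Theorem~\ref{ss:same} plus the component count $|W|/|W_I|$ from Theorem~\ref{ss:comp} to extract the factor $|W_I|/|W|$, and Corollary~\ref{unip:ssequiv} applied to $L_I$ (where $w$ has full support) to equate the unipotent and semisimple classes on the Levi. The closing remark about multiplicity one is correct and implicit in the paper's use of $\rho$ as a closed embedding onto a reduced irreducible variety.
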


\begin{rmk} Here is another strategy to prove Theorem \ref{unip:class}. Indeed, since $G$ is rational and the set of regular semisimple elements are open dense, we choose a parametrization $f: \P^1 \dashrightarrow G$ such that $f(1)=\s$ regular semisimple and $f(0)=\u$ regular unipotent. Then it is easy to show that $[\cl{\Y_{w,\s}}]$ and $[\cl{\Y_{w,\u}}]$ are parallel in $A_*(\B)_{\Q}$. However, it is a little subtle to compute the multiplicity of the fiber at 0, which now we know from Theorem \ref{unip:class} is equal to $|W|/|W_I|$ where $I = \supp(w)$.
\end{rmk}

\section{Example: type A}
For type A, the structure of $A_*(\B)$ is well-known; if $G$ is of type A$_{n-1}$ we have (and fix throughout this section) an isomorphism of rings 
$$A_*(\B) \rightarrow \Z[x_1, \cdots, x_n]/J: [\cl{C_w}]\mapsto \fS_{w_0w}(x)$$
 where $J$ is an ideal generated by symmetric functions and $\fS_{w_0w}(x)$ is called a Schubert polynomial. For more information one may refer to \cite{manivel} or \cite{fulton:schubert}. Here we assume that readers are familiar with this theory.

We reformulate our results in terms of Schubert polynomials. For simplicity, assume that $\ch \k$ is good for $G$ throughout this section.

\begin{thm} Let $G$ be of type A${_{n-1}}$ and identify $W=S_n$. If $G$ is defined over $\F_q$ for $q$ power of $\ch \k$, let $F$ be the geometric Frobenius morphism corresponding to $\F_q$. For $w \in S_n$, $\s \in G$ regular semisimple, $\u \in G$ regular unipotent, we have
\begin{gather*}
[\cl{X(w)}] = \sum_{\substack{u, v \in S_n \\ uw\act{F}v^{-1}=w_0\\\ell(u)+\ell(v)=\ell(w_0)-\ell(w)}}\fS_{u}(x)\fS_{v}(x)q^{\ell(v)}
\\ [\cl{\Y_{w,\s}}] = \sum_{\substack{u, v \in S_n \\ uwv^{-1}=w_0\\\ell(u)+\ell(v)=\ell(w_0)-\ell(w)}}\fS_{u}(x)\fS_{v}(x)
\\ [\cl{\Y_{w,\u}}] = \frac{|(S_n)_I|}{n!}\sum_{\substack{u, v \in S_n \\ uwv^{-1}=w_0\\\ell(u)+\ell(v)=\ell(w_0)-\ell(w)}}\fS_{u}(x)\fS_{v}(x)
\end{gather*}
where $I = \supp(w)$ and $(S_n)_I$ is the parabolic subgroup of $S_n$ corresponding to $I$.
\end{thm}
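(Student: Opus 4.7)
The plan is essentially to translate the Chow-ring identities already established in Theorem \ref{dl:class}, the regular semisimple formula at the end of Section 4.3, and Theorem \ref{unip:class} through the given ring isomorphism $A_*(\B) \xrightarrow{\sim} \Z[x_1,\ldots,x_n]/J$ sending $[\cl{C_w}] \mapsto \fS_{w_0 w}(x)$. So the proof should be short and organizational rather than require new geometric input.

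First I would observe that under the isomorphism, the Schubert class $[\cl{C_{w_0 u}}]$ is sent to $\fS_{w_0 \cdot w_0 u}(x) = \fS_{u}(x)$, since $w_0$ is an involution; likewise $[\cl{C_{w_0 v}}] \mapsto \fS_v(x)$. Moreover, the intersection product $[\cl{C_{w_0 u}}] \cdot [\cl{C_{w_0 v}}]$ in $A_*(\B)$ corresponds under the isomorphism to the product $\fS_u(x) \fS_v(x)$ in $\Z[x_1,\ldots,x_n]/J$, since the map is a ring isomorphism (and since we have assumed $\ch \k$ is good, so the unipotent/Levi constructions in the earlier sections are valid).

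Next I would simply substitute these identifications into each of the three formulas from the earlier sections. For $[\cl{X(w)}]$, Theorem \ref{dl:class} gives
\[
[\cl{X(w)}] = \sum_{\substack{u,v \in W \\ uw\act{F}v^{-1}=w_0 \\ \ell(u)+\ell(v)=\ell(w_0)-\ell(w)}} [\cl{C_{w_0 u}}] \cdot [\cl{C_{w_0 v}}]\, q^{\ell(v)},
\]
which after applying the isomorphism yields the first claimed identity. The argument for $[\cl{\Y_{w,\s}}]$ is identical, using the regular semisimple formula of Section 4.3 and noting that its summation condition does not involve $F$. For $[\cl{\Y_{w,\u}}]$ the same translation applied to Theorem \ref{unip:class} produces the prefactor $|W_I|/|W| = |(S_n)_I|/n!$ under the identification $W = S_n$.

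No step here is expected to be a genuine obstacle: the only thing to double-check is the normalization convention $[\cl{C_w}] \mapsto \fS_{w_0 w}(x)$ (so that the Schubert class corresponding to a codimension-$\ell(w_0 w)$ variety really maps to the Schubert polynomial of appropriate degree) and the involution $w_0^2 = e$ used when simplifying $\fS_{w_0(w_0 u)}$. Everything else is a formal consequence of the three previously proved class formulas and the stated ring presentation of $A_*(\B)$ in type A.
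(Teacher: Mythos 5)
Your proposal is correct and is exactly what the paper intends: the theorem in Section 5 is stated without proof precisely because it is the direct translation of Theorem \ref{dl:class}, the regular-semisimple class formula at the end of Section 4.3, and Theorem \ref{unip:class} through the Borel-type ring isomorphism $A_*(\B) \to \Z[x_1,\ldots,x_n]/J$, $[\cl{C_w}] \mapsto \fS_{w_0w}(x)$. The key bookkeeping points you flag — that $w_0^2 = e$ so $[\cl{C_{w_0u}}] \mapsto \fS_u(x)$, that the map is a ring homomorphism so intersection products go to polynomial products, and that $|W_I|/|W|$ becomes $|(S_n)_I|/n!$ under $W = S_n$ — are the entire content of the verification, and you have them right.
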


We consider the double Schubert polynomial $\fS_{w_0}(x;y) = \prod_{i+j\leq n, i,j\geq 1} (x_i-y_j)$. It is known \cite[Corollary 2.4.8]{manivel} that
$$\fS_{w_0}(x;-y) = \sum_{w \in S_n} \fS_w(x)\fS_{ww_0}(y).$$
Suppose $w \in S_n$ has a reduced expression $w=s_1\cdots s_r$ where $r=\ell(w)$. We define $\del_w \colonequals \del_{s_1} \cdots \del_{s_r}$ to be the product of divided difference operators. Here
$$\del_{s_i}f(x_1, \cdots, x_n) \colonequals \frac{f(x_1, \cdots, x_n)-f(x_1, \cdots, x_{i+1}, x_i, \cdots, x_n)}{x_i - x_{i+1}}.$$
Note that this is well-defined on $\Z[x_1, \cdots, x_n]/J$ and the definition of $\del_{w}$ does not depend on the choice of the reduced word. We define $\del_w^x$ to be such an operator which only acts on $x_i$'s.

It is known that $\del_{w}^x\fS_{w'}(x) = \fS_{w'w^{-1}}(x)$ if $\ell(w')-\ell(w^{-1}) = \ell(w'w^{-1})$ and 0 otherwise. Thus
\begin{align*}
\del_{w}^x\fS_{w_0}(x;-y) &= \sum_{\substack{w' \in S_n \\\ell(w')-\ell(w^{-1})=\ell(w'w^{-1})}}\fS_{w'w^{-1}}(x)\fS_{w'w_0}(y).
\end{align*}
Also there exists an involution $\omega_y: \Z[y_1, \cdots, y_n]/J\rightarrow  \Z[y_1, \cdots, y_n]/J : y_i \mapsto -y_{n-i+1}$ which sends $\fS_{w}$ to $\fS_{w_0ww_0}$. Thus
\begin{align*}
\omega_y\del_{w}^x\fS_{w_0}(x;-y) &= \sum_{\substack{w' \in S_n \\\ell(w')-\ell(w^{-1})=\ell(w'w^{-1})}}\fS_{w'w^{-1}}(x)\fS_{w_0w'}(y)
\\&= \sum_{\substack{w' \in S_n \\\ell(w')+\ell(w^{-1})=\ell(w'w^{-1})}}\fS_{w_0w'w^{-1}}(x)\fS_{w'}(y)
\\&= \sum_{\substack{u,v \in S_n\\uwv^{-1} = w_0 \\ \ell(u)+\ell(v)=\ell(w_0)-\ell(w)}}\fS_{u}(x)\fS_{v}(y)
\end{align*}
and furthermore (recall that $\fS_w(x)$ is homogeneous of degree $\ell(w)$)
$$\omega_y\del_{w}^x\fS_{w_0}(x;-qy) = \sum_{\substack{u,v \in S_n\\uwv^{-1} = w_0 \\ \ell(u)+\ell(v)=\ell(w_0)-\ell(w)}}\fS_{u}(x)\fS_{v}(y)q^{\ell(v)}.$$
If we set $y_i = x_i$ for $1\leq i \leq n$, we have the following.
\begin{prop} Let $G$ be of type A$_{n-1}$ and identify $W=S_n$. If $G$ is defined over $\F_q$ and $F$ acts trivially on $S_n$, then 
$$[\cl{\X(w)}]= (\omega_y\del_{w}^x\fS_{w_0}(x;-qy))_{y_i = x_i}.$$
If $F$ acts as a conjugation by $w_0$ on $S_n$, then 
$$[\cl{\X(w)}]= (\del_{w}^x\fS_{w_0}(x;-qy))_{y_i = x_i}.$$
For $\s \in G$ regular semisimple and $\u \in G$ regular unipotent, we have 
\begin{gather*}
[\cl{\Y_{w,\s}}] = (\omega_y\del_{w}^x\fS_{w_0}(x;-y))_{y_i = x_i},
\\ [\cl{\Y_{w,\u}}] = \frac{|(S_n)_I|}{n!} (\omega_y\del_{w}^x\fS_{w_0}(x;-y))_{y_i = x_i}
\end{gather*}
where $I=\supp(w)$ and $(S_n)_I$ is the parabolic subgroup of $S_n$ corresponding to $I$.
\end{prop}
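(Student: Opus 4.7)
The plan is to deduce all four formulas from the three earlier class-computation theorems (Theorem~\ref{dl:class} for $[\cl{X(w)}]$, the preceding class theorem for $[\cl{\Y_{w,\s}}]$, and Theorem~\ref{unip:class} for $[\cl{\Y_{w,\u}}]$) by transporting them through the ring isomorphism $[\cl{C_w}]\mapsto\fS_{w_0w}(x)$ and matching the resulting sums against the double-Schubert identities derived in the paragraphs immediately above the proposition. Under that isomorphism the product $[\cl{C_{w_0u}}]\cdot[\cl{C_{w_0v}}]$ becomes $\fS_u(x)\fS_v(x)$, so setting $y_i=x_i$ in the already-established identity
$$\omega_y\del_w^x\fS_{w_0}(x;-qy)=\sum_{\substack{u,v\in S_n\\uwv^{-1}=w_0\\\ell(u)+\ell(v)=\ell(w_0)-\ell(w)}}\fS_u(x)\fS_v(y)\,q^{\ell(v)}$$
immediately produces the formula for $[\cl{X(w)}]$ when $F$ acts trivially on $S_n$. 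Specializing to $q=1$ gives $[\cl{\Y_{w,\s}}]$, and multiplying the latter by $|(S_n)_I|/n!$ gives $[\cl{\Y_{w,\u}}]$ via Theorem~\ref{unip:class}. These three cases are essentially free.

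The only case requiring a separate (still routine) identification is $F$ acting as conjugation by $w_0$ on $S_n$, for which $\omega_y$ must be omitted. Starting from
$$\del_w^x\fS_{w_0}(x;-qy)=\sum_{\substack{w'\in S_n\\\ell(w')-\ell(w^{-1})=\ell(w'w^{-1})}}\fS_{w'w^{-1}}(x)\,\fS_{w'w_0}(qy),$$
I would set $u=w'w^{-1}$ and $v=w'w_0$. The relation $v=uww_0$ is equivalent to $uw(w_0vw_0)^{-1}=w_0$, i.e. to $uw\act{F}v^{-1}=w_0$ under $\act{F}v=w_0vw_0$; the constraint $\ell(w')-\ell(w)=\ell(w'w^{-1})$ transforms into $\ell(u)+\ell(v)=\ell(w_0)-\ell(w)$ via $\ell(w'w_0)=\ell(w_0)-\ell(w')$; and the homogeneity of $\fS_{w'w_0}$ of degree $\ell(w_0)-\ell(w')$ gives $\fS_{w'w_0}(qy)=q^{\ell(v)}\fS_v(y)$. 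Setting $y_i=x_i$ and comparing with Theorem~\ref{dl:class} in its $w_0$-twisted form concludes the last case.

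Essentially the entire argument is bookkeeping; no geometric content is left. The only place one could slip is in deciding whether to apply $\omega_y$: when $F$ acts trivially on $S_n$ the $\omega_y$-version produces $v^{-1}$ naturally after the reindexing $w'\mapsto w_0w''$, whereas when $F=\ad(w_0)$ the raw identity without $\omega_y$ produces the shifted argument $(w_0vw_0)^{-1}$ directly --- which is exactly what the two halves of Theorem~\ref{dl:class} demand.
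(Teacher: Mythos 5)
Your proposal is correct and follows exactly the route the paper intends: transport Theorem~\ref{dl:class}, the regular semisimple class theorem, and Theorem~\ref{unip:class} through the isomorphism $[\cl{C_w}]\mapsto\fS_{w_0w}(x)$ and match against the double-Schubert identities derived immediately before the statement. Your explicit working-out of the $F=\ad(w_0)$ case (substituting $u=w'w^{-1}$, $v=w'w_0$, checking that $v=uww_0$ is equivalent to $uw\,\act{F}v^{-1}=w_0$, and tracking degrees) is the only piece the paper leaves implicit, and you have done it correctly.
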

Thus we have a simple algorithm to calculate the classes of such varieties.

\begin{rmk} Proposition \ref{ss:winv} says $[\cl{\Y_{w,\s}}] = [\cl{\Y_{w^{-1}, \s}}]$ for $w\in S_n$; it is trivial to check since the formula has a symmetry. However, it is not combinatorially obvious that $[\cl{\Y_{ww', \s}}] = [\cl{\Y_{w'w, \s}}]$ for $w, w'\in S_n$ such that $\supp(w) \cap \supp(w') = \emptyset$ (Proposition \ref{ss:ww'}.) It would be interesting to find a purely combinatorial proof of this fact.
\end{rmk}

We will give some examples of $[\cl{X(w)}]$ for small groups. 
\begin{example}[Type A${_1}$] For $G=GL_2$ and $W=S_2$, $F$ always acts trivially on $S_2$, and we have
\begin{align*}
&[\cl{X(id)}] = (q+1)[\cl{C_{id}}], &&[\cl{X(s_1)}] = [\cl{C_{s_1}}].
\end{align*}
Note that they are linearly independent if $q+1 \neq 0$ and $(q+1)$ divides $|G^F|_{p'}$. This is expected from Theorem \ref{dl:realbasis}.
\end{example}

\begin{example}[Type A${_2}$] For $G=GL_3$ and $W=S_3$, if $F$ acts trivially on $S_3$, then
\begin{align*}
&[\cl{X(id)}] = (q^3 + 2q^2 + 2q + 1)[\cl{C_{id}}], &&[\cl{X(s_1)}] = (q^2 + q + 1)[\cl{C_{s_1}}], 
\\ &[\cl{X(s_2)}] = (q^2 + q + 1)[\cl{C_{s_2}}], &&[\cl{X(s_1s_2)}] = q[\cl{C_{s_1s_2}}]+[\cl{C_{s_2s_1}}], 
\\ &[\cl{X(s_2s_1)}] = [\cl{C_{s_1s_2}}]+q[\cl{C_{s_2s_1}}], && [\cl{X(s_1s_2s_1)}] = [\cl{C_{s_1s_2s_1}}].
\end{align*}
In the other case, we have
\begin{align*}
&[\cl{X(id)}] = (q^3 + 1)[\cl{C_{id}}], &&[\cl{X(s_1)}] = (q + 1)[\cl{C_{s_1}}]+(q^2 + q)[\cl{C_{s_2}}], 
\\ &[\cl{X(s_2)}] = (q^2 + q)[\cl{C_{s_1}}]+(q+1)[\cl{C_{s_2}}], &&[\cl{X(s_1s_2)}] = (q+1)[\cl{C_{s_2s_1}}], 
\\ &[\cl{X(s_2s_1)}] = (q+1)[\cl{C_{s_1s_2}}], && [\cl{X(s_1s_2s_1)}] = [\cl{C_{s_1s_2s_1}}].
\end{align*}

If we substitute $q$ with 1 on the former formulae, we see that $[\cl{Y_{s_1s_2, \s}}] = [\cl{Y_{s_2s_1, \s}}]$ for regular semisimple $\s\in G$, which is expected from Proposition \ref{ss:winv} and \ref{ss:ww'}. Also note that $[\cl{X(w)}]_{w\in S_3}$ form a basis of $A_*(\B)$ provided $(q^2+q+1)(q+1)(q-1)\neq 0$ in the first case and $(q^2-q+1)(q+1)(q-1)\neq 0$ in the second case. This is also expected from Theorem \ref{dl:realbasis} since $|G^F|_{p'}$ has factors $q^2+q+1, q+1, q-1$ in the first case and $q^2-q+1, q+1, q-1$ in the second case.
\end{example}
From now on we only check when $[\cl{\Y_{w,\s}}]$ is equal for different $w \in W$.

\begin{example}[Type A${_3}$] For $G=GL_4$ and $W=S_4$, the list below shows when $[\cl{\Y_{w,\s}}]$ coincides for different $w\in S_4$.
\begin{gather*}
\{s_1s_2, s_2s_1\}, \{s_2s_3, s_3s_2\}, \{s_3s_1s_2, s_2s_3s_1, s_3s_2s_1, s_1s_2s_3\},
\\\{s_1s_2s_3s_1, s_3s_1s_2s_1\},\{s_1s_2s_3s_2, s_2s_3s_2s_1\},\{s_2s_3s_1s_2s_1, s_1s_2s_3s_1s_2\}
\end{gather*}
They are also expected from Proposition \ref{ss:winv} and \ref{ss:ww'}.
\end{example}

\begin{example}[Type A${_4}$] For $G=GL_5$ and $W=S_5$, the list below shows when $[\cl{\Y_{w,\s}}]$ coincides for different $w\in S_5$.
\begin{gather*}
\{s_2s_1,s_1s_2\},\{s_3s_2,s_2s_3\},\{s_3s_4,s_4s_3\},
\{s_3s_4s_1,s_4s_3s_1\},\{s_4s_2s_1,s_4s_1s_2\},
\\\{s_3s_2s_1,s_2s_3s_1,s_3s_1s_2,s_1s_2s_3\},\{s_3s_4s_2,s_2s_3s_4,s_4s_3s_2,s_4s_2s_3\},
\\\{s_2s_3s_4s_2,s_4s_2s_3s_2\},\{s_3s_1s_2s_1,s_1s_2s_3s_1\},\{s_2s_3s_2s_1,s_1s_2s_3s_2\},\{s_3s_4s_3s_2,s_2s_3s_4s_3\},
\\\{s_3s_4s_2s_1,s_2s_3s_4s_1,s_4s_3s_2s_1,s_4s_2s_3s_1,s_3s_4s_1s_2,s_1s_2s_3s_4,s_4s_3s_1s_2,s_4s_1s_2s_3\},
\\\{s_2s_3s_1s_2s_1,s_1s_2s_3s_1s_2\},\{s_3s_4s_2s_3s_1,s_3s_4s_1s_2s_3\},\{s_2s_3s_4s_1s_2,s_4s_2s_3s_1s_2\},
\\\{s_2s_3s_4s_2s_3,s_3s_4s_2s_3s_2\},\{s_3s_4s_2s_3s_1s_2,s_2s_3s_4s_1s_2s_3\},
\\\{s_3s_4s_1s_2s_1,s_1s_2s_3s_4s_1,s_4s_3s_1s_2s_1,s_4s_1s_2s_3s_1\},\{s_2s_3s_4s_2s_1,s_4s_2s_3s_2s_1,s_1s_2s_3s_4s_2,s_4s_1s_2s_3s_2\},
\\\{s_3s_4s_3s_2s_1,s_2s_3s_4s_3s_1,s_3s_4s_3s_1s_2,s_1s_2s_3s_4s_3\},\{s_1s_2s_3s_4s_2s_1,s_4s_1s_2s_3s_2s_1\},
\\\{s_2s_3s_4s_3s_2s_1,s_1s_2s_3s_4s_3s_2\},\{s_3s_4s_3s_1s_2s_1,s_1s_2s_3s_4s_3s_1\},
\\\{s_2s_3s_4s_2s_3s_1,s_3s_4s_2s_3s_2s_1,s_1s_2s_3s_4s_2s_3,s_3s_4s_1s_2s_3s_2\},
\\\{s_2s_3s_4s_1s_2s_1,s_4s_2s_3s_1s_2s_1,s_1s_2s_3s_4s_1s_2,s_4s_1s_2s_3s_1s_2\},
\\\{s_2s_3s_4s_2s_3s_1s_2,s_2s_3s_4s_1s_2s_3s_2\},\{s_2s_3s_4s_3s_1s_2s_1,s_1s_2s_3s_4s_3s_1s_2\},
\\\{s_1s_2s_3s_4s_1s_2s_1,s_4s_1s_2s_3s_1s_2s_1\},\{s_1s_2s_3s_4s_2s_3s_1,s_3s_4s_1s_2s_3s_2s_1\},
\\\{s_2s_3s_4s_1s_2s_3s_1,s_3s_4s_1s_2s_3s_1s_2\},\{s_3s_4s_2s_3s_1s_2s_1,s_1s_2s_3s_4s_1s_2s_3\},
\\\{s_2s_3s_4s_2s_3s_2s_1,s_1s_2s_3s_4s_2s_3s_2\},\{s_2s_3s_4s_2s_3s_1s_2s_1,s_1s_2s_3s_4s_1s_2s_3s_2\},
\\\{s_2s_3s_4s_1s_2s_3s_2s_1,s_1s_2s_3s_4s_2s_3s_1s_2\},\{s_1s_2s_3s_4s_1s_2s_3s_1,s_3s_4s_1s_2s_3s_1s_2s_1\},
\\\{s_2s_3s_4s_1s_2s_3s_1s_2s_1,s_1s_2s_3s_4s_1s_2s_3s_1s_2\},\{s_1s_2s_3s_4s_2s_3s_1s_2s_1,s_1s_2s_3s_4s_1s_2s_3s_2s_1\}.
\end{gather*}
They are also expected from Proposition \ref{ss:winv} and \ref{ss:ww'}.
\end{example}

\begin{example}[Type A${_5}$] For $G=GL_6$ and $W=S_6$, there are six exceptions not explained by Proposition \ref{ss:winv} or \ref{ss:ww'} listed below.
\begin{gather*}
\{s_1s_2s_3s_4s_5s_3s_4s_1s_2, s_2s_3s_4s_5s_3s_4s_1s_2s_1, s_4s_5s_2s_3s_4s_3s_1s_2s_1, s_4s_5s_1s_2s_3s_4s_3s_1s_2\},
\\ \{s_2s_3s_4s_5s_2s_3s_4s_1s_2s_1, s_1s_2s_3s_4s_5s_1s_2s_3s_4s_2, 
s_4s_5s_1s_2s_3s_4s_1s_2s_3s_2, s_4s_5s_2s_3s_4s_2s_3s_1s_2s_1\},
\\\{s_2s_3s_4s_5s_1s_2s_3s_4s_1s_2s_3, s_2s_3s_4s_5s_2s_3s_4s_1s_2s_3s_1, s_3s_4s_5s_2s_3s_4s_1s_2s_3s_1s_2, s_3s_4s_5s_1s_2s_3s_4s_1s_2s_3s_2\},
\\\{s_1s_2s_3s_4s_5s_1s_2s_3s_4s_2s_1,s_1s_2s_3s_4s_5s_2s_3s_4s_1s_2s_1, s_4s_5s_1s_2s_3s_4s_1s_2s_3s_2s_1, s_4s_5s_1s_2s_3s_4s_2s_3s_1s_2s_1\},
\\\{s_2s_3s_4s_5s_3s_4s_2s_3s_1s_2s_1, s_2s_3s_4s_5s_2s_3s_4s_3s_1s_2s_1, s_1s_2s_3s_4s_5s_3s_4s_1s_2s_3s_2, s_1s_2s_3s_4s_5s_1s_2s_3s_4s_3s_2\},
\\\{s_1s_2s_3s_4s_5s_3s_4s_1s_2s_3s_2s_1, s_1s_2s_3s_4s_5s_1s_2s_3s_4s_3s_2s_1,
\\ s_1s_2s_3s_4s_5s_3s_4s_2s_3s_1s_2s_1, s_1s_2s_3s_4s_5s_2s_3s_4s_3s_1s_2s_1\}.
\end{gather*}
It would be interesting to find a geometric/combinatorial condition which is both necessary and sufficient to find $w, w' \in W$ such that $[\cl{\Y_{w,\s}}] = [\cl{\Y_{w', \s}}]$ for $\s\in G$ regular semisimple.
\end{example}

\bibliographystyle{alpha}
\bibliography{decomp_revised_1st}

\begin{thebibliography}{DMR07}

\bibitem[Bri04]{brion}
Michel Brion.
\newblock Lectures on the geometry of flag varieties.
\newblock Available at \url{http://arxiv.org/abs/math/0410240}, 2004.

\bibitem[Che94]{chevalley}
Claude Chevalley.
\newblock Sur les {D}{\'e}conpositions {C}ellulaires des {E}spaces ${G}/{B}$.
\newblock {\em Proc. Sympos. Pure Math.}, 56:1--23, 1994.

\bibitem[DL76]{dl}
Pierre Deligne and George Lusztig.
\newblock Representations of reductive groups over a finite field.
\newblock {\em Ann. Math}, 103:103--161, 1976.

\bibitem[DM91]{dm:book}
Fran{\c c}ois Digne and Jean Michel.
\newblock {\em Representations of finite groups of {L}ie type}, volume~21 of
  {\em London {M}athematical {S}ociety {S}tudent {T}exts}.
\newblock Cambridge {U}niversity {P}ress, 1991.

\bibitem[DM06]{dm:end}
Fran{\c c}ois Digne and Jean Michel.
\newblock Endomorphisms of {D}eligne-{L}usztig varieties.
\newblock {\em Nagoya Math. J.}, 183:35--103, 2006.

\bibitem[DMR07]{dmr:coh}
Fran{\c c}ois Digne, Jean Michel, and Rapha{\"e}l Rouquier.
\newblock Cohomologie des vari{\'e}t{\'e}s de {Deligne-Lusztig}.
\newblock {\em Ann. Math.}, 209:749--822, 2007.

\bibitem[Ful91]{fulton:schubert}
William Fulton.
\newblock Flags, {S}chubert polynomials, degeneracy loci, and determinantal
  formulas.
\newblock {\em Duke mathematical journal}, 65(3):381 -- 420, 1991.

\bibitem[Ful98]{fulton}
William Fulton.
\newblock {\em Intersection theory}, volume~2 of {\em Ergebnisse der Mathematik
  und ihrer Grenzgebiete}.
\newblock Springer, second edition, 1998.

\bibitem[Han99]{hansen}
S{\o}ren~Have Hansen.
\newblock {\em The geometry of {D}eligne{-}{L}usztig varieties;
  {H}igher{-}{D}imensional {A}{G} codes}.
\newblock PhD thesis, Department of Mathematical Sciences, University of
  Aarhus, 1999.

\bibitem[Kaw75]{kawanaka}
Noriaki Kawanaka.
\newblock Unipotent elements and characters of finite chevalley groups.
\newblock {\em Osaka J. Math}, 12:523--554, 1975.

\bibitem[KL79]{kl:hecke}
David Kazhdan and George Lusztig.
\newblock Representations of {C}oxeter groups and {H}ecke algebras.
\newblock {\em Invent. {M}ath}, 53.2:165--184, 1979.

\bibitem[Lus78]{lu:chevalley}
George Lusztig.
\newblock Representations of finite chevalley groups.
\newblock {\em C.B.M.S.}, 39, 1978.
\newblock Expository lectures from the CBMS regional conference held at
  Madison, Wisconsin, August 8-12, 1977.

\bibitem[Lus80]{lu:reflection}
George Lusztig.
\newblock On the reflection representation of a finite {C}hevalley group.
\newblock In {\em Representation {T}heory of {L}ie {G}roups}, volume~34 of {\em
  London {M}athematical {S}ociety lecture note series}, chapter~12, pages
  325--337. Cambridge {U}niversity {P}ress, 1980.

\bibitem[Lus85]{lu:char1}
George Lusztig.
\newblock Character sheaves {I}.
\newblock {\em Adv. Math.}, 56:193--237, 1985.

\bibitem[Lus11]{lu:weyltounip}
George Lusztig.
\newblock From conjugacy classes in the {W}eyl group to unipotent classes.
\newblock {\em Representation {T}heory({A}n {E}lectronic {J}ournal of the
  {AMS})}, 15:494--530, 2011.

\bibitem[Lus12]{lu:homogeneity}
George Lusztig.
\newblock Elliptic elements in a {W}eyl group: a homogeneity property.
\newblock {\em Representation {T}heory({A}n {E}lectronic {J}ournal of the
  {AMS})}, 16:127--151, 2012.

\bibitem[Man01]{manivel}
Laurent Manivel.
\newblock {\em Symmetric functions, {S}chubert polynomials, and degeneracy
  loci}, volume~6 of {\em {S}{M}{F}{/}{A}{M}{S} texts and monographs}.
\newblock American {M}athematical {S}ociety, 2001.

\bibitem[Ste68]{steinberg}
Robert Steinberg.
\newblock {\em Endomorphisms of linear algebraic groups}.
\newblock Number~80 in Memoirs of the American Mathematical Society. American
  Mathematical Society, 1968.

\end{thebibliography}

\end{document}